\documentclass{amsart}

\usepackage[a4paper,centering,textwidth=145mm,textheight=195mm]{geometry}
\usepackage[all]{xypic}
\usepackage{amssymb}
\usepackage{color}

\newcommand{\bil}[1]{{\langle{#1}\rangle}} 
\newcommand{\bpf}[1]{{\langle\!\langle{#1}\rangle\!\rangle}} 
\newcommand{\qpf}[1]{{\langle\!\langle{#1}]]}} 

\newcommand{\Z}{\mathbb{Z}}

\DeclareMathOperator{\Symd}{Symd}
\DeclareMathOperator{\Sym}{Sym}
\DeclareMathOperator{\Trp}{Trp}
\DeclareMathOperator{\Srp}{Srp}
\DeclareMathOperator{\Nrp}{Nrp}
\DeclareMathOperator{\Prp}{Prp}
\DeclareMathOperator{\Nrd}{Nrd}
\DeclareMathOperator{\End}{End}

\DeclareMathOperator{\Int}{Int}
\DeclareMathOperator{\Br}{Br}

\DeclareMathOperator{\dlog}{dlog}

\DeclareMathOperator{\sq}{\mathsf{sq}}

\newtheorem{prop}{Proposition}[section]
\newtheorem{lemma}[prop]{Lemma}
\newtheorem{thm}[prop]{Theorem}

\newtheorem*{lemma*}{Lemma}

\theoremstyle{definition}
\newtheorem{defn}[prop]{Definition}

\theoremstyle{remark}
\newtheorem*{remark*}{Remark}
\newtheorem{remark}[prop]{Remark}

\title[Descent and decomposability of algebras of exponent $2$]{Descent and decomposability of algebras of exponent $2$ in characteristic $2$}
\author{Anne Qu\'eguiner-Mathieu}
\address{Universit\'e Sorbonne Paris Nord\\
Institut Galilée \\
LAGA - CNRS (UMR 7539)\\
F-93430 Villetaneuse, France}
\email{queguin@math.univ-paris13.fr}

\author{Jean-Pierre Tignol}
\address{ICTEAM Institute, Box L4.05.01\\
UCLouvain\\
B-1348 Louvain-la-Neuve, Belgium}
\email{jean-pierre.tignol@uclouvain.be}

\date{\today}

\keywords{Indecomposable division algebras, symplectic
    involutions, quadratic forms over discretely valued fields}

\subjclass{Primary: 12E15; Secondary: 16W10, 11E81}

\thanks{The authors acknowledge
    support from Wallonie--Bruxelles International and the French
    government in the framework of ``Partenariats Hubert Curien''
    (Project \emph{$2$-alg\`ebres de degr\'e~$8$ et
      d'exposant~$2$}). Research leading to this work was initiated
    while the second author was visiting the Sorbonne Paris Nord
    university, whose hospitality is gratefully acknowledged.}

\begin{document}

 \begin{abstract}We define an invariant $\Delta$ of central simple algebras of degree 8 and exponent 2 over fields of characteristic 2. It is built from the discriminant Pfister forms of symplectic involutions, takes values in a quotient of the third part of the graded group associated to the fundamental filtration of the Witt group of quadratic forms, and vanishes on tensor products of three quaternion algebras, thus giving a necessary condition for decomposability. Variants of the construction yield necessary and sufficient conditions for a biquaternion algebra to descend from a quadratic extension of the base field, and for a quaternion algebra to descend from a biquadratic extension that is not purely inseparable. Our results extend known results both in characteristic $2$ and in characteristic different from $2$, see the introduction. Using residue maps for quadratic forms, we show that these invariants do not vanish in general, and we give an explicit central simple algebra of degree 8 and exponent 2 with nonzero $\Delta$-invariant, which is therefore indecomposable. As explained in the appendix, the purely inseparable case is quite different; in particular, a quaternion algebra descends if and only if its Brauer class also does.
    \end{abstract}
    
    \maketitle

Using a cohomological invariant of symplectic involutions,
Garibaldi--Parimala--Tignol define in~\cite[\S11]{GPT} a
cohomological invariant $\Delta$ of central simple algebras of
degree~$8$ and exponent~$2$ over arbitrary fields of characteristic
different from~$2$. This invariant vanishes when the algebra decomposes into a tensor product of
three quaternion algebras. Examples where it does not
vanish were given by Barry in~\cite{Barry};
Merkurjev--Neshitov--Zainoulline showed that it is nontrivial generically~\cite[\S4a]{MNZ}. The purpose of
the present paper is to describe a similar construction in
characteristic~$2$, based on the discriminant Pfister form of
symplectic involutions from~\cite{BGBT3} or \cite{CRAS}. Being rooted
in quadratic forms, the invariant $\Delta$ in
Definition~\ref{def:Delta} below takes values in a quotient of the
group $I^3_q$ from the fundamental filtration of the Witt group of the
base field. Kato's isomorphisms from~\cite{Kato} allow an
equivalent formulation in terms of Kato--Milne cohomology.

While the invariant $\Delta$ yields a necessary criterion for
decomposability of central simple algebras of degree~$8$ and
exponent~$2$, variants of the construction using discriminant Pfister
forms yield necessary and sufficient criteria for the descent of
biquaternion algebras over quadratic extensions, and for the descent
of quaternion algebras over biquadratic extensions that are not purely
inseparable. These variants are an additional theme of the paper; they
play a central role in the construction of examples showing that the
invariant $\Delta$ is not trivial.

The definition of the invariant $\Delta$ is given
in Section~\ref{sec:Delta}, see Definition~\ref{def:Delta}. It relies on the
proof that the discriminant 
$3$-fold Pfister forms of all the symplectic involutions on a central
simple algebra of degree~$8$ have the same image in a suitable
quotient of the group $I^3_q$ of the center. 

Section~\ref{sec:quadesc} discusses a necessary and sufficient
condition for a $2$-torsion Brauer class over a quadratic extension to
descend to a $2$-torsion Brauer class over the base field, i.e., to be
obtained by scalar extension from a $2$-torsion Brauer class over the
base field. This condition is expressed by the vanishing of the
corestriction when the extension is separable, and of a canonical map
induced by the squaring map when it is not, see
Theorem~\ref{thm:quadesc}. The result is well-known when the 
characteristic is different from~$2$. For Brauer classes of quaternion
algebras, a similar condition is given for the descent from a
biquadratic extension that is not purely inseparable, see
Theorem~\ref{thm:biquaddescBr}. 

Even when the Brauer class of a biquaternion algebra over a quadratic
extension descends, the algebra itself may not descend: it is Brauer-equivalent to the scalar extension of a central simple algebra defined over the base field, but this algebra may have larger degree. 
Given a quadratic extension $K$ of a base field $F$ of characteristic $2$, section~\ref{sec:Quadbiquat} introduces an
invariant $\delta_{K/F}$ of biquaternion algebras over $K$, whose vanishing characterizes biquaternion
algebras that descend, i.e., which are isomorphic to the scalar extension of a biquaternion algebra over $F$, see Definition~\ref{def:deltaKF} and Proposition~\ref{prop:deltaKFdesc}. The definition of $\delta_{K/F}$ follows the same pattern as
for the $\Delta$-invariant: it relies on the result that the given
biquaternion algebra $B$ over $K$ is contained in a central simple
$F$-algebra $A$ of degree~$8$, and that all the discriminant Pfister
forms of symplectic involutions on $A$ that restrict to a symplectic
involution on $B$ have the same image in a suitable quotient of
$I^3_q(F)$, independently of the choice of the algebra
$A$, see Definition~\ref{def:deltaKF}. This invariant turns out to be
the same as the one defined in cohomological terms by 
Barry--Chapman--Laghribi~\cite{BaCL} when $K$ is separable over $F$,
and by Laghribi--Lorenz~\cite[\S6]{LL} when $K$ is inseparable over
$F$ (and by Barry~\cite{Barry} when the characteristic is not~$2$), 
although the earlier constructions did not rely on the choice of an algebra $A$ containing $B$.

A similar strategy is used in Section~\ref{sec:biquadesc} to
define an invariant $\delta_{L/F}$ of quaternion algebras over a
biquadratic extension $L$ of a base field $F$ of characteristic~$2$,
assuming $L$ is not purely inseparable over $F$: when the Brauer class
of the quaternion algebra $Q$ descends from~$L$ to $F$, the algebra
$Q$ can be embedded in a central simple $F$-algebra $A$ of degree~$8$,
and all the discriminant Pfister forms of involutions on $A$ that
extend the canonical conjugation involution on $Q$ have the same image
in a suitable quotient of $I^3_q(F)$, independently of the
choice of $A$. This common image defines an invariant $\delta_{L/F}(Q)$, which
vanishes if and only if $Q$ descends to a quaternion algebra over
$F$, see Definition~\ref{def:deltaLFQ} and
Theorem~\ref{thm:biquadesc}. With a completely different method, this
invariant has also been 
defined by Laghribi--Lorenz~\cite{LL} (and by Sivatski~\cite{Siv} when
the characteristic is not~$2$), although these authors consider it as an
invariant of the central simple $F$-algebras that contain~$Q$. 
The purely inseparable case is considered in the appendix. The situation there is quite different: a quaternion algebra over a purely inseparable biquadratic field extension descends if and only if its Brauer class descends, see Theorem~\ref{thm:pureinsep}. 

As this rough sketch makes clear, the various
invariants defined in this paper are closely related: if a central simple $F$-algebra $A$ of
degree~$8$ and exponent~$2$ contains a biquadratic field extension $L$
of $F$ that is not purely inseparable, and $K\subset L$ is a quadratic
extension of $F$, then the centralizer of $L$ in $A$ is a quaternion
$L$-algebra $Q$ and the centralizer of $K$ in $A$ is a biquaternion
$K$-algebra $B$. The following inclusions are clear from the
definitions:
\[
  F\subset K\subset L\subset Q \subset B \subset A.
\]
Canonical homomorphisms induced by the inclusions carry the invariants
as follows:
\[
  \delta_{L/F}(Q) \mapsto \delta_{K/F}(B) \mapsto \Delta(A).
\]
Therefore, examples where $\Delta(A)\neq0$ yield examples of
biquaternion algebras $B$ such that $\delta_{K/F}(B)\neq0$ and of
quaternion algebras $Q$ with $\delta_{L/F}(Q)\neq0$. We assume
throughout that the characteristic is~$2$, but the reader may 
verify that all the results in
Sections~\ref{sec:Delta}--\ref{sec:biquadesc} are valid in arbitrary
characteristic, with only minor alterations in the proofs involving
separable field extensions.

Section~\ref{sec:nontriv} exhibits examples where the invariants do
not vanish, using residue maps for quadratic forms as the main tool. A
first construction takes as a starting ingredient any central simple
$F$-algebra $A$ of degree~$8$ and exponent~$2$ that does not decompose
into a tensor product of quaternion algebras. The
$\delta_{K/F}$-invariant of the centralizer of any separable quadratic
extension $K$ of $F$ in $A$ is shown to be nontrivial, even though it
is not known whether $\Delta(A)$ vanishes or not. A central simple
algebra of degree~$8$ and exponent~$2$ with nontrivial
$\Delta$-invariant over the rational function field $F(t)$ in one
indeterminate is built from $A$ and the quadratic extension $K$, see
Theorem~\ref{thm:const1}. 

A second construction is more explicit: it centers on the quaternion
algebra $[a_1a_2,a_1+a_2)_F$ over the rational function field
$F=k(a_1,a_2)$ in two indeterminates over an arbitrary field $k$ of
characteristic~$2$, and on the quadratic extension
$K=F(\sqrt{a_1a_2})$ and the biquadratic extension
$L=K(\wp^{-1}(a_2))$. With these ingredients, and a sequence of
rational function field extensions $F\subset F(t_1)\subset F(t_1,t_2)
\subset F(t_1,t_2,t_3)$, the following are successively constructed:
\begin{itemize}
\item
  a quaternion $L(t_1)$-algebra $Q$ such that
  $\delta_{L(t_1)/F(t_1)}(Q)\neq0$ (Theorem~\ref{thm:deltaLFnontriv});
\item
  a biquaternion $K(t_1,t_2)$-algebra $B$ such that
  $\delta_{K(t_1,t_2)/F(t_1,t_2)}(B)\neq0$
  (Theorem~\ref{thm:deltaKFnontriv});
\item
  a central simple $F(t_1,t_2,t_3)$-algebra $A$ of degree~$8$ and
  exponent~$2$ such that $\Delta(A)\neq0$
  (Theorem~\ref{thm:Deltanontriv}). 
\end{itemize}
In addition, a biquaternion $K(t_1)$-algebra $B'$ containing $Q$ such
that $\delta_{K(t_1)/F(t_1)}(B')=0$ is given in
Remark~\ref{rem:deltaKFtriv}, and a central simple
$F(t_1,t_2)$-algebra $A'$ of degree~$8$ and exponent~$2$ containing
$B$ such that $\Delta(A')=0$ is given in Remark~\ref{rem:Deltatriv}.

Since $\Delta(A)\neq0$, the algebra $A$ in
Theorem~\ref{thm:Deltanontriv} does not decompose into a tensor
product of quaternion algebras, but it is Brauer-equivalent to a
tensor product of four quaternion algebras, which is particularly
easy to describe explicitly.

Besides Rowen's early examples in~\cite{Rowen}, few methods to
construct indecomposable division algebras of degree~$8$ and
exponent~$2$ in characteristic~$2$ are known. A plausibly exhaustive
list is given by the contributions of Karpenko~\cite{Karp},
McKinnie~\cite{McK} and Aravire--Jacob--O'Ryan~\cite{AJOR}. McKinnie's
examples have the noteworthy property of not containing any inseparable
quadratic extension of the center. In~\cite{AJOR},  Theorem~4.1(iii)
asserts that any central division 
algebra of exponent~$2$ with a
maximal subfield of 
the form $F\bigl(\wp^{-1}(b_1),\wp^{-1}(b_2),\sqrt x\bigr)$ over a field $F$ of
characteristic~$2$  decomposes as a tensor product 
of three quaternion algebras. The algebra from
Theorem~\ref{thm:Deltanontriv} is a counterexample to this assertion. In addition, as explained in the appendix, ~\cite[Theorem~4.1(ii)]{AJOR} fails for purely inseparable quartic field extensions of exponent $2$: remark~\ref{rem:cex} provides an example of an indecomposable algebra of degree $8$ and exponent $2$ which contains a maximal subfield $F\bigl(\wp^{-1}(a),\sqrt[4]{x}\bigr)$.

\subsection*{Notation}

Throughout the paper, $F$ is a field of characteristic~$2$. For
quadratic and bilinear forms over $F$, we use the same notation
as~\cite{EKM}: the Witt group of quadratic forms and the Witt ring of
bilinear forms over $F$ are denoted by $I_q(F)$ and $W(F)$
respectively, and for every integer $n\geq1$ the $n$-th power of the
fundamental ideal $I(F)$ of $W(F)$ is denoted by $I^n(F)$. We write
$I_q^{n+1}(F)$ for $I^n(F)\cdot I_q(F)$ and let
\[
  \overline I_q^n(F) = I^n_q(F)/I^{n+1}_q(F).
\]
For simplicity, we do not distinguish notationally between a form from its Witt-equivalence class. For $\varphi\in
I^n_q(F)$, we write $\overline\varphi$ for the image of $\varphi$ in
$\overline I^n_q(F)$.

Recall from~\cite[Th.~14.3]{EKM} that the Clifford invariant defines a
map $e_2$ from $I^2_q(F)$ to $\Br_2(F)$, the $2$-torsion subgroup of
the Brauer group. A theorem of Sah~\cite{Sah} (see
also~\cite[Th.~16.3]{EKM}) asserts that $e_2$ induces an isomorphism
\[
  \overline e_2\colon \overline I_q^2(F) \xrightarrow{\sim} \Br_2(F).
\]
For any central simple $F$-algebra $A$ such that $2[A]=0$ in $\Br(F)$,
we write $q_A$ for a quadratic form in $I^2_q(F)$ such that
$e_2(q_A)=[A]$. Its image $\overline{q_A}\in\overline I^2_q(F)$ is
thus uniquely determined by $A$. In particular, when $A$ is a
biquaternion algebra, the form $q_A$ can be chosen to be
$6$-dimensional. The $6$-dimensional quadratic forms $q_A$ are known
as \emph{Albert forms} of the biquaternion algebra $A$,
see~\cite[\S16.A]{BoI}.

For any central simple $F$-algebra $A$ and any field extension $L$ of
$F$, we write $A_L$ for the $L$-algebra $A\otimes_FL$ obtained by
scalar extension from $F$ to $L$. We also write $[A]_L$ for
$[A_L]\in\Br(L)$. Similarly, for any quadratic space $(V,\varphi)$
over $F$, we write $(V_L,\varphi_L)$ for the quadratic space over $L$
obtained by extending scalars.

For central simple algebras with involution, we use the notation
of~\cite{BoI}. In particular, for every involution $\sigma$ on a
central simple $F$-algebra $A$, we let
\[
  \Sym(\sigma)=\{x\in A\mid \sigma(x)=x\}
  \qquad\text{and}\qquad
  \Symd(\sigma) = \{x+\sigma(x)\mid x\in A\}.
\]
Recall that an $F$-linear involution $\sigma$ is symplectic if and
only if $1\in\Symd(\sigma)$, see~\cite[(2.6)]{BoI}. The reduced
characteristic polynomial of 
every $x\in\Symd(\sigma)$ is then the square of the \emph{Pfaffian
  characteristic polynomial} $\Prp_{\sigma,x}$,
see~\cite[(2.10)]{BoI}. If $\deg(A)=2m$, then the forms $\Trp_\sigma$,
$\Srp_\sigma$ and $\Nrp_\sigma$ of degree~$1$, $2$, and $m$
respectively are defined on $\Symd(\sigma)$ as coefficients of the
Pfaffian characteristic polynomial by the equation
\[
  \Prp_{\sigma,x}(X) = X^m + \Trp_\sigma(x) X^{m-1} + \Srp_\sigma(x)
  X^{m-2} + \cdots + \Nrp_\sigma(x) \qquad\text{for
    $x\in\Symd(\sigma)$}.
\]
Thus, for all $x\in \Symd(\sigma)$, $\Nrd_A(x)$ is a square and $\Nrp_\sigma(x)$ is its unique square root; it does not depend on the choice of the
symplectic involution $\sigma$ such that $x\in\Symd(\sigma)$.

\section{The invariant $\Delta$}
\label{sec:Delta}

Throughout this section, $A$ is a central simple $F$-algebra of
degree~$8$ whose Brauer class $[A]$ is $2$-torsion. Recall
from~\cite{BGBT3} that a $3$-fold quadratic Pfister form
$\pi(\sigma)$ called the 
\emph{discriminant Pfister form} is attached to each symplectic
involution $\sigma$ on $A$.
We briefly recall the construction in characteristic~$2$, based on the
existence of \'etale 
biquadratic $F$-algebras $L\subset \Symd(\sigma)$ (i.e., $F$-algebras
that carry a Galois structure with elementary abelian Galois group of
order~$4$): let $K\subset L$ be the quadratic \'etale subalgebra fixed
under a nontrivial element $\alpha$ in the Galois group and
\begin{equation*}
  W=\{w\in\Symd(\sigma)\mid w\ell = \alpha(\ell)w \text{ for all
      $\ell\in L$}\}.
\end{equation*}
It is shown in~\cite[Lemma~7.1]{BGBT3} that $\dim_FW=8$ and $w^2\in
K$ for all $w\in W$. Moreover, since the trace map $T_{K/F}$ vanishes on $F$,
\cite[Th.~7.3]{BGBT3} shows that the map $w\mapsto T_{K/F}(w^2)$ for
$w\in W$ is a scalar multiple of a $3$-fold Pfister form. This form
turns out to be independent up to isometry of the choices of $L$ and
$\alpha$, and to be hyperbolic if and only if $A$ decomposes into a
tensor product of three quaternion algebras stable under $\sigma$; it is
the discriminant Pfister form of $\sigma$ according to the definition
in~\cite[p.~269]{BGBT3} or \cite[Prop.~4, Cor.~6]{CRAS}.
Since the characteristic
of $F$ is~$2$, it is shown in~\cite{CRAS} that 
there is a $5$-fold quadratic 
Pfister form $\widehat\pi(\sigma)$ such that
the Witt-equivalence class of
$\Srp_\sigma$ decomposes as
\[
  \Srp_\sigma = [1,1] + \pi(\sigma) + \widehat\pi(\sigma)
  \qquad\text{in $I_q(F)$}.
\]
The discriminant Pfister form $\pi(\sigma)$ is uniquely determined by
this decomposition. 

Our goal in this section is to show
that the discriminant Pfister forms of the symplectic involutions on
$A$ all have the same image in the quotient of $\overline I^3_q(F)$ by
the subgroup $\overline{\bpf{F^\times}q_A}$, where $q_A\in I^2_q(F)$
is any quadratic form such that $e_2(q_A)=[A]$. This image is
therefore an invariant of $A$, which we define to be the
$\Delta$-invariant, see Definition~\ref{def:Delta}.
\medbreak

As in characteristic different from~$2$ (see~\cite[\S2]{GPT}), the
proof proceeds by index reduction to the case where $A$ is not a
division algebra, i.e. $A$ is Brauer-equivalent to a biquaternion
algebra. 
In this case, we have the following: 
\begin{lemma}
  \label{lem:comphyp}
  Suppose $A$ is not a division algebra and let $q_A\in I^2_q(F)$ be
  an Albert form such that $e_2(q_A) = [A]$. Let $\sigma_0$ be a
  hyperbolic symplectic involution on $A$ and let
  $\sigma=\Int(u)\circ\sigma_0$ for some unit
  $u\in\Symd(\sigma_0)$. Then $\pi(\sigma)$ is Witt-equivalent to a
  scalar multiple of $\bpf{\Nrp_{\sigma_0}(u)}q_A$.
\end{lemma}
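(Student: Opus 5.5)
The plan is to reduce to an explicit model of $(A,\sigma_0)$, normalize $u$, and then compute $\pi(\sigma)$ directly from the construction of~\cite[Th.~7.3]{BGBT3} with a well-chosen biquadratic $L$.

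\emph{Model and reduction of $u$.} Since $A$ is not a division algebra and has exponent~$2$, we may write $A\simeq M_2(F)\otimes_F B$ with $B$ a biquaternion algebra, possibly split or of index~$2$. All hyperbolic symplectic involutions on $A$ are conjugate. We therefore take
\[
\sigma_0=\gamma\otimes\tau,
\]
where $\gamma$ is the canonical (symplectic) involution on $M_2(F)$ and $\tau$ is an orthogonal involution on $B$. This involution is hyperbolic because $e_{11}$ is an idempotent with $\sigma_0(e_{11})=1-e_{11}$.

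Replacing $\sigma$ by a conjugate $\Int(g)\circ\sigma\circ\Int(g)^{-1}$ does not change $\pi(\sigma)$ up to isometry. If $g$ is a similitude of $\sigma_0$ with multiplier $\mu$, the conjugate of $\sigma=\Int(u)\circ\sigma_0$ is $\Int(g u\sigma_0(g))\circ\sigma_0$, and $\Nrp_{\sigma_0}(gu\sigma_0(g))=\Nrd(g)\Nrp_{\sigma_0}(u)$. The factor $\Nrd(g)$ is a square times a power of $\mu$. Up to multiplying $\bpf{\Nrp_{\sigma_0}(u)}q_A$ by a scalar, we therefore remain in the same class modulo norms that are absorbed into $q_A$, because similitude multipliers of $\sigma_0$ are represented by $q_A$.

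Using such conjugations, I would bring $u\in\Symd(\sigma_0)$ into a block form adapted to the hyperbolic structure. Concretely, I would view $\sigma_0$ as the adjoint involution of the hyperbolic hermitian form on $B^2$ and diagonalize the hermitian form attached to $\sigma$, so that $\sigma$ becomes adjoint to $\bil{b_1,b_2}$ with $b_i\in\Sym(\tau)$. Then $\Nrp_{\sigma_0}(u)$ equals $\Nrd_B(b_1)\Nrd_B(b_2)$ up to squares.

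\emph{Choice of $L$ and computation of $W$.} Next I would choose an étale biquadratic $L\subset\Symd(\sigma)$ compatible with this diagonal form. For instance, take $L=K\otimes K'$ with $K'$ generated by a suitable element in the $M_2(F)$-factor and $K$ generated by an element of $\Symd$ of the restricted involutions on $B$. I would then identify the $8$-dimensional space $W$ explicitly as a sum of two $4$-dimensional pieces, one in each block, twisted by $b_1$ and $b_2$. Computing $w\mapsto T_{K/F}(w^2)$ on each piece should give, up to a scalar, a $2$-dimensional or $4$-dimensional piece of an Albert form of $B$, multiplied respectively by $\Nrd(b_1)$ and $\Nrd(b_2)$. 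Here I would use that the discriminant of $\tau$ and the Albert form are linked through~\cite[\S16]{BoI}. Assembling the two pieces, one obtains a form similar to $\bil{1,\Nrd(b_1b_2)}\otimes q'$. Since $\pi(\sigma)$ is known to be similar to a $3$-fold Pfister form and $\bpf{\Nrp_{\sigma_0}(u)}q_A$ is Witt equivalent to a $3$-fold Pfister form up to a hyperbolic part, comparing these identifies $\pi(\sigma)$ with a scalar multiple of $\bpf{\Nrp_{\sigma_0}(u)}q_A$ in $I_q(F)$.

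\emph{Main obstacle and fallback.} The hard step will be the explicit computation of $W$ and of $T_{K/F}(w^2)$ in the diagonalized model. In particular, one has to make the Albert form of $B$ appear correctly, rather than only a subform, and check that the resulting $8$-dimensional form is Witt equivalent to $\bpf{n}q_A$, which is $12$-dimensional. If the direct computation is too messy, I would first try a consistency argument. The class $\pi(\sigma)-\bpf{\Nrp(u)}q_A$ lies in $I^3_q(F)$ and is killed over $F(\sqrt{\Nrp(u)})$ or the corresponding Artin–Schreier extension, where $\sigma$ becomes conjugate to $\sigma_0$. Combined with the fact that $\pi(\sigma)$ is a Pfister form divisible by $\bpf{\Nrp(u)}$, this should pin down the answer.
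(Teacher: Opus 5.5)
Your overall strategy is the paper's: use conjugation-invariance of $\pi$, put $\sigma$ in block-diagonal form relative to a hyperbolic $\sigma_0$, and pick a biquadratic $L$ whose $\alpha$-fixed subalgebra is $Fe_{11}+Fe_{22}$, so that $W$ splits into one piece per block. However, the step meant to produce $\bpf{\Nrp_{\sigma_0}(u)}q_A$ is wrong.

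With $\sigma$ adjoint to $\bil{b_1,b_2}$ and $\sigma_0$ adjoint to the hyperbolic form, you may take $u=\bigl(\begin{smallmatrix}0&b_1^{-1}\\ b_2^{-1}&0\end{smallmatrix}\bigr)$. Then $\Nrp_{\sigma_0}(u)^2=\Nrd_A(u)=\bigl(\Nrd_B(b_1)\Nrd_B(b_2)\bigr)^{-1}$. Hence $\Nrd_B(b_1)\Nrd_B(b_2)$ is \emph{always} a square, and it says nothing about the square class of $\Nrp_{\sigma_0}(u)$. Your identification $\Nrp_{\sigma_0}(u)\equiv\Nrd_B(b_1)\Nrd_B(b_2)$ modulo squares would make $\bpf{\Nrp_{\sigma_0}(u)}$ trivial. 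Indeed your assembled form $\bil{1,\Nrd(b_1b_2)}\otimes q'\simeq\bil{1,1}\otimes q'$ is hyperbolic, since $\bil{1,1}$ is metabolic. The sketch would therefore show that $\pi(\sigma)$ is hyperbolic whenever $A$ is not a division algebra, which is false. For example, take $A=M_4(Q)$ and $\sigma$ adjoint to the hermitian form $\bil{1,1,1,c}$ over $Q$: the lemma yields a multiple of $\bpf{c}\,n_Q$, with $n_Q$ the norm form of $Q$, and this is anisotropic for generic $c$.

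The correct scalar is a Pfaffian norm, i.e.\ a square root. It appears only when you compare the two block involutions $\sigma_i=\Int(b_i^{-1})\circ\tau$. These must be symplectic, so $b_i\in\Symd(\tau)$ and not merely $\Sym(\tau)$; otherwise $1\notin\Symd(\sigma)$. Since $\sigma_1=\Int(b_1^{-1}b_2)\circ\sigma_2$, \cite[(16.18)]{BoI} gives $\Nrp_{\sigma_1}\simeq\bil{\Nrp_{\sigma_2}(b_1^{-1}b_2)}\Nrp_{\sigma_2}$. Moreover $\Nrp_{\sigma_0}(u)=\Nrp_{\sigma_2}(b_1^{-1}b_2)\Nrd_B(b_2)^{-1}$, and $\Nrd_B(b_2)$ is a similarity factor of $q_A$.

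The same idea removes your ``main obstacle'', with no explicit computation of $w\mapsto w^2$. By \cite[Th.~6.10]{BGBT1} there is $L=L_0\times L_0\subset\Symd(\sigma)$ having $F\times F$ as its $\alpha$-fixed part. The trace must be taken over this $F\times F$, not over your $K$, for $W$ to be block-diagonal. Then $W=W_1\times W_2$, so $\pi(\sigma)$ is a multiple of $\sq_{W_1}\perp\sq_{W_2}$. Also \cite[Prop.~5.6 and 6.5]{BGBT1} give $\Nrp_{\sigma_i}\simeq N_{L_0/F}\perp\sq_{W_i}$. The two copies of $N_{L_0/F}$ cancel. So the $8$-dimensional $\sq_{W_1}\perp\sq_{W_2}$ is Witt-equivalent to the $12$-dimensional $\Nrp_{\sigma_1}\perp\Nrp_{\sigma_2}$, which is Witt-equivalent to $\bpf{\Nrp_{\sigma_2}(b_1^{-1}b_2)}\Nrp_{\sigma_2}$. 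Finally $\Nrp_{\sigma_2}$ is an Albert form by \cite[(16.8)]{BoI}, so no comparison of Pfister forms is needed.

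Two further corrections.
\begin{enumerate}
\item Diagonalizing the hermitian form of $\sigma$ is a change of basis by an arbitrary $g\in A^\times$. You only justify your normalization for similitudes of $\sigma_0$, and it is unclear that these suffice. No restriction is needed: the identity $\Int(g)\circ\sigma\circ\Int(g)^{-1}=\Int(gu\sigma_0(g))\circ\sigma_0$ holds for every $g$. What makes the factor $\Nrd_A(g)$ harmless is that reduced norms are similarity factors of $q_A$ \cite[(16.6)]{BoI}; this is the first step of the paper's proof. It is not that multipliers of $\sigma_0$ are represented by $q_A$: every scalar is a multiplier of a hyperbolic involution.
\item The fallback does not work. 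The involution $\sigma$ need not become hyperbolic when $\Nrp_{\sigma_0}(u)$ becomes a square. Moreover, vanishing over $F(\sqrt{c})$ only places $\pi(\sigma)-\lambda\bpf{c}q_A$ in the kernel of scalar extension, and that kernel contains all forms $\bpf{c}\varphi$.
\end{enumerate}
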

\begin{proof}
The proof has two steps. First, we prove that the quadratic form $\bpf{\Nrp_{\sigma_0}(u)}q_A$ does not depend on the choices of $\sigma_ 0$ and $u$ as in the statement. We then produce a particular choice of $\sigma_0$ and $u$ for which $\pi(\sigma)$ is Witt-equivalent to a
  scalar multiple of $\bpf{\Nrp_{\sigma_0}(u)}q_A$. 
  
 Assume $\sigma=\Int(u)\circ\sigma_0=\Int(u')\circ\sigma_0'$ for some hyperbolic involutions $\sigma_0$ and $\sigma_0'$ and some units 
  $u\in\Symd(\sigma_0)$ and  $u'\in\Symd(\sigma_0')$. 
   Since all the hyperbolic symplectic involutions on $A$ are
  conjugate, there exists $a\in A^\times$ such that
  \[
    \sigma_0'=\Int(a)\circ\sigma_0\circ\Int(a)^{-1} =
    \Int(a\sigma_0(a))\circ\sigma_0.
    \]
    It follows that $\sigma=\Int(u)\circ\sigma_0=\Int(u'a\sigma_0(a))\circ\sigma_0$, so
    \[u=\lambda u' a\sigma_0(a)\qquad\text{for some $\lambda\in
      F^\times$.}
      \]
      Computing the reduced norms, we get 
    $\Nrd_A(u)=\lambda^8\Nrd_A(u')\Nrd_A(a)^2$, hence
  \[
    \Nrp_{\sigma_0}(u)= \lambda^4\Nrd_A(a)\Nrp_{\sigma_0'}(u')
  \]
  because $\Nrp_{\sigma_0}(u)$ (resp.\ $\Nrp_{\sigma_0'}(u')$) is the unique
  square root of $\Nrd_A(u)$ (resp.\ $\Nrd_A(u')$). Now, $\Nrd_A(a)$ is
  a reduced norm from any biquaternion algebra Brauer-equivalent to
  $A$, hence it is a similarity factor of $q_A$
  by~\cite[(16.6)]{BoI}. Therefore, $\bil{\lambda^4\Nrd_A(a)}q_A\simeq
  q_A$, and we get $\bil{\Nrp_{\sigma_0}(u)}q_A \simeq \bil{\Nrp_{\sigma_0'}(u')}q_A$ as expected.

  To produce a suitable choice of $\sigma_0$ and $u$, choose a representation 
  $A=\End_D(V)$ for some division algebra $D$
  of degree~$1$, $2$ or $4$ and some right $D$-vector space $V$ of
  dimension~$8$, $4$ or $2$ respectively. Then $\sigma$ is adjoint to
  a form $h$ on $V$ that is alternating if $D=F$ and hermitian for a
  symplectic involution on $D$ if $\deg(D)=2$ or $4$. In each case, we
  may consider an orthogonal decomposition
  \[
    (V,h) = (V_1,h_1)\perp (V_2,h_2) \qquad\text{with $\dim V_1=\dim
      V_2$}, 
  \]
  for some nonsingular forms $h_1$ and $h_2$. 
Using a $D$-linear isomorphism $V_1\simeq V_2$, we identify $\End_D(V_1)=\End_D(V_2)$, denoted by $B$, and $A=M_2(B)$.
  Let $v\in B^\times$ be such that $h_2(x,y) =
  h_1(v(x),y)$ for all $x$, $y\in V_2$. Then the involutions
  $\sigma_1$, $\sigma_2$ on $B$ adjoint to $h_1$ and $h_2$ are related
  by
  \[
    \sigma_2=\Int(v^{-1})\circ\sigma_1.
  \]
  These involutions are both symplectic since $h_1$ and $h_2$ have the
  same type as $h$, and the involution $\sigma$ on $M_2(B)$ is given
  by
  \[
    \sigma
    \begin{pmatrix}
      a_{11}&a_{12}\\ a_{21}&a_{22}
    \end{pmatrix}
    =
    \begin{pmatrix}
      \sigma_1(a_{11})&\sigma_1(a_{21})v\\
      v^{-1}\sigma_1(a_{12}) & v^{-1}\sigma_1(a_{22})v
    \end{pmatrix}
    \qquad\text{for $a_{11}$, \dots, $a_{22}\in B$.}
  \]
  Therefore, $\sigma=\Int(u)\circ \sigma_0$, where 
   $u=\bigl(
  \begin{smallmatrix}
    1&0\\ 0&v^{-1}
  \end{smallmatrix}
  \bigr)$ and $\sigma_0$ is the involution given by
  \[
    \sigma_0
    \begin{pmatrix}
      a_{11}&a_{12}\\ a_{21}&a_{22}
    \end{pmatrix}
    =
    \begin{pmatrix}
      \sigma_1(a_{11})&\sigma_1(a_{21})\\
      \sigma_1(a_{12}) & \sigma_1(a_{22})
    \end{pmatrix}
    \qquad\text{for $a_{11}$, \dots, $a_{22}\in B$.}
  \]
  Since $\sigma_1$ is symplectic, there exists $a\in B$ such that
  $\sigma_1(a)+a=1$. Then the element $e=
  \bigl(
  \begin{smallmatrix}
    a&1+a\\ a&1+a
  \end{smallmatrix}
  \bigr)$
  satisfies $\sigma_0(e)+e=1$ and $e^2=e$, which shows that $\sigma_0$ is
  a hyperbolic symplectic involution. Since $\sigma_0$ and $\sigma$ are
  both symplectic involutions, it follows that $u\in\Symd(\sigma_0)$
  by~\cite[(2.7)]{BoI}. To complete the proof, we
  compute $\pi(\sigma)$.

  Observe that $\Symd(\sigma_1)\times \Symd(\sigma_2)$ embeds
  diagonally in $\Symd(\sigma)$. Let $K=F\times F$ diagonally embedded
  in $\Symd(\sigma)$. By~\cite[Th.~6.10]{BGBT1}, there exists a
  biquadratic \'etale $F$-algebra $L\subset\Symd(\sigma)$ containing
  $K$ as the subalgebra fixed under a nontrivial element $\alpha$ of
  its Galois group. This algebra has the form $L=L_0\times L_0$ for
  some quadratic \'etale $F$-algebra $L_0$ that embeds in
  $\Symd(\sigma_1)$ and $\Symd(\sigma_2)$, and
  $\alpha=\alpha_0\times\alpha_0$ for $\alpha_0$ the nontrivial
  $F$-automorphism of $L_0$. Letting
  \[
    W=\{w\in\Symd(\sigma)\mid w\ell=\alpha(\ell)w\text{ for all
      $\ell\in L$}\}
  \]
  and
  \[
    W_i=\{w\in\Symd(\sigma_i)\mid w\ell=\alpha(\ell)w\text{ for all
      $\ell\in L_0$}\}\quad\text{for $i=1$, $2$},
  \]
  we have for the squaring maps $\sq_W\colon W\to K$ and
  $\sq_{W_i}\colon W_i\to F$
  \[
    (W,\sq_W) = (W_1,\sq_{W_1})\times (W_2,\sq_{W_2}),
  \]
  hence by definition $\pi(\sigma)$ is Witt-equivalent to a scalar
  multiple 
  of $\sq_{W_1}\perp\sq_{W_2}$. On the other hand, \cite[Prop.~5.6
  and 6.5]{BGBT1} yields
  \[
    (\Symd(\sigma_i),\Nrp_{\sigma_i})\simeq (L_0,N_{L_0/F})\perp
    (W_i,\sq_{W_i})
    \qquad\text{for $i=1$, $2$,}
  \]
  where $N_{L_0/F}$ is the norm quadratic
  form. Therefore, $\sq_{W_1}\perp\sq_{W_2}$ is Witt-equivalent to
  $\Nrp_{\sigma_1}\perp\Nrp_{\sigma_2}$. Since
  $\sigma_1=\Int(v)\circ\sigma_2$, it follows from~\cite[(16.18)]{BoI}
  that $\Nrp_{\sigma_1}\cong \bil{\Nrp_{\sigma_2}(v)}\Nrp_{\sigma_2}$,
  hence $\pi(\sigma)$ is Witt-equivalent to a scalar multiple of
  $\bpf{\Nrp_{\sigma_2}(v)}\Nrp_{\sigma_2}$. This completes the proof
  because $\Nrp_\sigma(u)=\Nrp_{\sigma_2}(v)$ and $\Nrp_{\sigma_2}$ is
  an Albert form of $B$ by~\cite[(16.8)]{BoI}, hence $q_A$ is similar
  to $\Nrp_{\sigma_2}$ by~\cite[(16.3)]{BoI}.
\end{proof} 

Without restriction on the index of $A$, let $q_A\in I^2_q(F)$ be such
that $e_2(q_A)=[A]$. Note that $q_A$ is uniquely determined modulo
$I^3_q(F)$, hence $\overline{q_A}\in\overline I^2_q(F)$ depends only
on $A$.

\begin{thm}
  \label{thm:main}
  Let $\sigma$, $\sigma'$ be symplectic involutions on $A$ and let
  $w\in\Symd(\sigma)$ be such that $\sigma'=\Int(w)\circ\sigma$. Then
  \[
    \overline{\pi(\sigma')} = \overline{\bpf{\Nrp_\sigma(w)}q_A} +
    \overline{\pi(\sigma)} \qquad\text{in $\overline I^3_q(F)$}.
  \]
\end{thm}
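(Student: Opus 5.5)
We reduce by index reduction to the case where $A$ is not a division algebra, as in~\cite[\S2]{GPT}, and then apply Lemma~\ref{lem:comphyp}.

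\emph{Reduction.} Let $\varphi = \pi(\sigma') + \bpf{\Nrp_\sigma(w)}q_A + \pi(\sigma)\in I^3_q(F)$. The claim is that $\varphi\in I^4_q(F)$. Take $F'$ to be the function field of a suitable generalized Severi--Brauer variety of $A$, for instance the variety of right ideals of reduced dimension $2$ or $4$. Over $F'$ the algebra $A_{F'}$ has index at most $4$. All the ingredients commute with scalar extension: $\pi(\sigma)_{F'}=\pi(\sigma_{F'})$, $\Nrp$ is compatible with extension, and $e_2(q_{A,F'})=[A_{F'}]$. We then need an injectivity statement. The natural candidate is that the kernel of $\overline I^3_q(F)\to\overline I^3_q(F')$ is zero, or at least contained in something harmless, when $F'$ splits $A$ only down to index $\le4$. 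In characteristic $2$ this should follow from Kato's isomorphism $\overline I^3_q\simeq H^3_2$ together with the known kernel of scalar extension to such function fields. For Severi--Brauer varieties of reduced dimension $4$ of a degree-$8$ algebra of exponent~$2$, the kernel on $H^3$ is $[A]\cdot H^1$, which is contained in $\overline{\bpf{F^\times}q_A}$ only after adjustment. This is the main obstacle. The cleaner route is to use a generic point of the variety of involution-compatible splittings, or to choose $F'$ so that the kernel is known to vanish (e.g.\ the function field of a variety of index-reduction that is generically ``rational-like'' over $F$, such as the Weil transfer / Pfister neighbor trick). I would first try the Severi--Brauer variety $SB_2(A)$ of ideals of reduced dimension~$2$ and invoke the analogue of Peyre/Merkurjev's computation in characteristic~$2$. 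Such a computation shows that $\ker(H^3(F)\to H^3(F'))=[A]\cup H^1(F)$. In $\overline I^3_q$ this kernel corresponds to $\overline{\bpf{F^\times}q_A}$, which would prove only the weaker statement modulo $\overline{\bpf{F^\times}q_A}$. To get the exact statement, I would instead track the element $\varphi$ more precisely: both sides are functorial in $(\sigma,w)$, so a generic argument over the variety of pairs is preferable.

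\emph{Non-division case.} Assume $A$ is not division, and fix a hyperbolic symplectic involution $\sigma_0$. Write $\sigma=\Int(u)\circ\sigma_0$ with $u\in\Symd(\sigma_0)$. Then $\sigma'=\Int(wu)\circ\sigma_0$, and $wu\in\Symd(\sigma_0)$ by~\cite[(2.7)]{BoI}. By Lemma~\ref{lem:comphyp}, $\pi(\sigma)\equiv \bpf{\Nrp_{\sigma_0}(u)}q_A$ and $\pi(\sigma')\equiv\bpf{\Nrp_{\sigma_0}(wu)}q_A$ modulo scalars. A scalar multiple of a form in $I^3_q$ is congruent to it modulo $I^4_q$, so we may work in $\overline I^3_q$. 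Multiplicativity of reduced norms gives $\Nrp_{\sigma_0}(wu)=\Nrp_\sigma(w)\Nrp_{\sigma_0}(u)$, since both sides are square roots of $\Nrd(w)\Nrd(u)$. Combined with the identity $\bpf{ab}\equiv\bpf a+\bpf b \bmod I^2$, this gives
\[
\overline{\bpf{\Nrp_{\sigma_0}(wu)}q_A}=\overline{\bpf{\Nrp_\sigma(w)}q_A}+\overline{\bpf{\Nrp_{\sigma_0}(u)}q_A},
\]
which is the formula.

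The main obstacle is therefore the descent step. We need a field extension $F'$ over which $A$ becomes non-division and the map $\overline I^3_q(F)\to\overline I^3_q(F')$ is injective on the relevant elements. I expect to handle it with a two-step reduction. First pass to $F(t)$ and a generic $w$ to make the situation generic. Then use the function field of the Severi--Brauer variety of reduced dimension~$4$, where the kernel on $\overline I^3_q$ is generated by $\overline{\bpf{F^\times}q_A}$. Finally, we would need to check by specialization that the coset ambiguity vanishes.
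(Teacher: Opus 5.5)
Your treatment of the case where $A$ is not a division algebra is correct and coincides with the paper's. You fix $\sigma_0$ hyperbolic and write $\sigma=\Int(u)\circ\sigma_0$, $\sigma'=\Int(wu)\circ\sigma_0$. Lemma~\ref{lem:comphyp}, together with $\Nrp_{\sigma_0}(wu)=\Nrp_\sigma(w)\Nrp_{\sigma_0}(u)$ and $\bpf{ab}\equiv\bpf a+\bpf b\bmod I^2(F)$, then gives the formula.

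\textbf{The gap.} The reduction of the division case is left unresolved. You need a field $F'\supset F$ over which $A$ is no longer division and such that $\overline I^3_q(F)\to\overline I^3_q(F')$ is injective, at least on the relevant element. The proposal does not produce such a field.

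\textbf{Why the Severi--Brauer route fails.} Even setting aside that you do not justify any characteristic-$2$ computation of the kernel, the kernel you expect is $\overline{\bpf{F^\times}q_A}$. That is exactly the indeterminacy the theorem is about. So passing to $F'$ can only give the congruence modulo $\overline{\bpf{F^\times}q_A}$. No specialization or ``generic $w$'' argument can recover information that scalar extension has already destroyed.

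The weaker congruence would make Definition~\ref{def:Delta} meaningful. But the exact identity in $\overline I^3_q(F)$ is what is used later:
\begin{itemize}
\item in Proposition~\ref{prop:defdeltaLF}, one must work modulo the smaller subgroup $\overline{\bpf{N_{L/F}(L^\times)}q_A}$;
\item in Theorem~\ref{thm:biquadesc}, one needs $\overline{\pi(\sigma'_A)}=0$ on the nose.
\end{itemize}

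\textbf{The missing idea.} Reduce the index through quadrics rather than Severi--Brauer varieties.
\begin{itemize}
\item By Rowen's theorem~\cite{Rowen}, $[A]=[Q_1\otimes_FQ_2\otimes_FQ_3\otimes_FQ_4]$.
\item Over the function field $F_1$ of an Albert form of $Q_3\otimes_FQ_4$, this biquaternion algebra becomes Brauer-equivalent to a quaternion algebra $H$.
\item Over the function field $F_2$ of an Albert form of $Q_2\otimes H$, the algebra $A_{F_2}$ has index at most~$4$, so it is not division.
\item Scalar extension to the function field of an Albert form is injective on $\overline I^3_q$ by~\cite[Th.~5.6]{HLS}. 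This is trivial if the form is isotropic, since the function field is then purely transcendental.
\end{itemize}
Unlike your Severi--Brauer kernel, these extensions do not kill $q_A$, only reduce its index. Hence $\overline I^3_q(F)\to\overline I^3_q(F_2)$ is injective, and your non-division argument over $F_2$ yields the exact identity over $F$.
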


\begin{proof}
  If $A$ is not a division algebra, we may choose for $q_A$ an Albert
  form and pick a hyperbolic symplectic involution $\sigma_0$ on
  $A$. If $u\in\Symd(\sigma_0)$ satisfies
  $\sigma=\Int(u)\circ\sigma_0$, then $\sigma'=\Int(wu)\circ\sigma_0$
  and Lemma~\ref{lem:comphyp} shows that $\pi(\sigma)$ and
  $\pi(\sigma')$ are Witt-equivalent to scalar multiples of
  $\bpf{\Nrp_{\sigma_0}(u)} q_A$ and $\bpf{\Nrp_{\sigma_0}(wu)}q_A$
  respectively, hence
  \[
    \overline{\pi(\sigma)} = \overline{\bpf{\Nrp_{\sigma_0}(u)}q_A}
    \quad\text{and}\quad
    \overline{\pi(\sigma')} =
    \overline{\bpf{\Nrp_{\sigma_0}(wu)}q_A}
    \quad\text{in $\overline I^3_q(F)$}.
  \]
  By taking the square root of each side in
  $\Nrd_A(wu)=\Nrd_A(w)\Nrd_A(u)$ we obtain $\Nrp_{\sigma_0}(wu)=
  \Nrp_{\sigma}(w)\Nrp_{\sigma_0}(u)$, hence
  $\bpf{\Nrp_{\sigma_0}(wu)} \equiv \bpf{\Nrp_{\sigma}(w)}+
  \bpf{\Nrp_{\sigma_0}(u)}\bmod I^2(F)$. Therefore,
  \[
    \overline{\pi(\sigma')} =
    \overline{\bpf{\Nrp_{\sigma}(w)}q_A}
    +
    \overline{\bpf{\Nrp_{\sigma_0}(u)}q_A}
    =
    \overline{\bpf{\Nrp_{\sigma}(w)}q_A}+
    \overline{\pi(\sigma)}.
  \]
  Thus, the theorem is proved in the case where $A$ is not a division
  algebra.

  When $A$ is a division algebra, we reduce the index by scalar
  extension to use the preceding case. Recall from~\cite[Th.~2]{Rowen}
  that $A$ is Brauer-equivalent to a tensor product of four quaternion
  algebras; let
  \[
    [A]=[Q_1\otimes_FQ_2\otimes_FQ_3\otimes_FQ_4] \quad\text{in
      $\Br(F)$.}
  \]
  Let $F_1$ be the function field of an Albert quadratic form of
  $Q_3\otimes_FQ_4$. (If $Q_3\otimes_FQ_4$ is not a division algebra,
  then the Albert quadratic form is isotropic and $F_1$ is a rational
  function field over $F$.) After scalar extension from $F$ to $F_1$,
  $Q_3\otimes_FQ_4$ is 
  Brauer-equivalent to a quaternion algebra $H$, hence
  \[
    [A_{F_1}] = [Q_1\otimes_FQ_2\otimes_FH] \quad\text{in $\Br(F_1)$.}
  \]
  Extend scalars further from $F_1$ to the function field $F_2$ of an
  Albert quadric of $Q_2\otimes_FH$; then $A_{F_2}$ is not a division
  algebra, hence by the special case of the theorem proved above,
  \[
    \overline{\pi(\sigma')} - \overline{\bpf{\Nrp_\sigma(w)}q_A} -
    \overline{\pi(\sigma)}\in \ker\bigl(\overline I^3_q(F) \to
    \overline I^3_q(F_2)\bigr).
  \]
  But by~\cite[Th.~5.6]{HLS}\footnote{We are indebted to A.~Laghribi
    for pointing out this reference to us.} scalar extension to the
  function field of an Albert quadratic form induces an injective map
  on $\overline I^3_q(F)$, hence $\ker\bigl(\overline
  I^3_q(F) \to \overline I^3_q(F_2)\bigr)=0$ and the proof is
  complete. 
\end{proof}
  
Theorem~\ref{thm:main} allows us to set the following definition:

\begin{defn}
  \label{def:Delta}
  For a central simple $F$-algebra $A$ of degree~$8$ with $2[A]=0$ in
  $\Br(F)$ and any quadratic form $q_A\in I^2_q(F)$ such that
  $e_2(q_A)=[A]$, the invariant $\Delta(A)$ is the image in the
  quotient $\overline I^3_q(F)/\overline{\bpf{F^\times}q_A}$ of the
  discriminant Pfister form of any symplectic involution $\sigma$ on
  $A$,
  \[
    \Delta(A) = \overline{\pi(\sigma)} +
    \overline{\bpf{F^\times}q_A} \in \overline
    I_q^3(F)/\overline{\bpf{F^\times}q_A}.
  \]
\end{defn}

Vanishing of $\Delta(A)$ is a necessary condition for the
decomposability of $A$:

\begin{prop}
  \label{prop:decomp}
  If $A=Q_1\otimes_FQ_2\otimes_FQ_3$ for some quaternion $F$-algebras
  $Q_1$, $Q_2$, $Q_3$, then $\Delta(A)=0$.
\end{prop}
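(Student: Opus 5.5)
The plan is to exhibit one symplectic involution $\sigma$ on $A=Q_1\otimes_FQ_2\otimes_FQ_3$ whose discriminant Pfister form is hyperbolic. Since $\Delta(A)$ is, by Definition~\ref{def:Delta} and Theorem~\ref{thm:main}, the class of $\overline{\pi(\sigma)}$ for \emph{any} symplectic involution $\sigma$, this forces $\Delta(A)=0$.

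For the construction, let $\gamma_i$ denote the canonical (conjugation) involution on $Q_i$; it is symplectic. Put
\[
  \sigma=\gamma_1\otimes\gamma_2\otimes\gamma_3 .
\]
This is an involution on $A$. It is symplectic: in characteristic~$2$ a tensor product of involutions is symplectic as soon as one factor is. Explicitly, if $1=x+\gamma_1(x)$ with $x\in Q_1$, then
\[
  1=(x\otimes1\otimes1)+\sigma(x\otimes1\otimes1)\in\Symd(\sigma),
\]
so $\sigma$ is symplectic by~\cite[(2.6)]{BoI}.

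Each quaternion factor $Q_i$ is stable under $\sigma$. So $A$ decomposes into a tensor product of three quaternion algebras stable under $\sigma$. By the property of the discriminant Pfister form recalled at the beginning of this section (from~\cite{BGBT3}), $\pi(\sigma)$ is therefore hyperbolic. Hence $\overline{\pi(\sigma)}=0$ in $\overline I^3_q(F)$, and its image in the quotient by $\overline{\bpf{F^\times}q_A}$ vanishes.

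The only point needing care is to make sure the cited characterization applies in the direction used, namely that a $\sigma$-stable decomposition implies $\pi(\sigma)$ is hyperbolic. If needed, this can be checked directly. Choose a separable quadratic subfield $L_1\subset\Symd(\gamma_1)$ and $L_2\subset\Symd(\gamma_2)$, and take $L=L_1\otimes L_2\subset\Symd(\sigma)$. Then compute $W$ and the form $w\mapsto T_{K/F}(w^2)$. This form turns out to be a multiple of a norm form that splits, using the third factor. I expect this verification to be routine, and would rely on the citation.
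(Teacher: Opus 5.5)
Your argument is correct and is the paper's own proof: take $\sigma=\gamma_1\otimes\gamma_2\otimes\gamma_3$, which is symplectic, and invoke the main theorem of~\cite{BGBT3} (or \cite[Prop.~8]{CRAS}) to get $\pi(\sigma)$ hyperbolic, whence $\Delta(A)=0$. One caution about your optional ``direct check'': in characteristic~$2$ one has $\Symd(\gamma_i)=\{\mathrm{Trd}(x)\mid x\in Q_i\}=F$, and $\Sym(\gamma_i)$ contains no separable quadratic subfield either, so the subfields $L_i\subset\Symd(\gamma_i)$ you propose do not exist and that sketch would have to be redone; relying on the citation, as the paper does, is the right call.
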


\begin{proof}
  The tensor product of the canonical involutions on $Q_1$, $Q_2$ and
  $Q_3$ is a symplectic involution with hyperbolic discriminant
  Pfister form, by the main theorem in~\cite{BGBT3} or
  \cite[Prop.~8]{CRAS}. 
\end{proof}

\subsection*{Cohomological $\Delta$-invariant}

Translating the construction above into cohomological terms is
straightforward. Let $\Omega_F^1$ be the $F$-vector space of absolute
differentials on $F$ and $\Omega_F^n = \bigwedge^n\Omega_F^1$ for
$n\geq2$. Recall that the (Kato--Milne) cohomology group $H^{n+1}(F)$
is defined as the cokernel of the map
\[
  \wp\colon \Omega^n_F \to \Omega_F^n/d(\Omega_F^{n-1})
  \quad\text{such that } \wp\bigl( x \frac{dy_1}{y_1}\wedge \ldots
  \wedge \frac{dy_n}{y_n} \bigr) = (x^2-x) \frac{dy_1}{y_1}\wedge \ldots
  \wedge \frac{dy_n}{y_n}
\]
for $x\in F$ and $y_1$, \ldots, $y_n\in F^\times$, see~\cite{Kato}. A
different (but equivalent) definition is given in~\cite[\S101]{EKM},
where $H^{n+1}(F)$ is denoted by $H^{n+1,n}(F, \Z/2\Z)$. Kato proves
in~\cite{Kato} that the map carrying $x \frac{dy_1}{y_1}\wedge \ldots
\wedge \frac{dy_n}{y_n}\in\Omega_F^n$ to $\overline{\qpf{y_1,\ldots,
y_n,x}}\in \overline I_q^{n+1}(F)$ for $x\in F$ and $y_1$,
\ldots, $y_n\in F^\times$ induces a canonical isomorphism
\[
  \kappa_{n+1}\colon H^{n+1}(F) \stackrel{\sim}{\longrightarrow}
  \overline I^{n+1}_q(F).
\]
In particular, the composition $\overline e_2\circ\kappa_2\colon
H^2(F)\to \Br_2(F)$ is an isomorphism, hence for every central simple
$F$-algebra $A$ such that $2[A]=0$ we may find $h_A\in H^2(F)$ such
that $\overline e_2\circ\kappa_2(h_A)=[A]$. For $x\in F^\times$, let
$\dlog(x) = \frac{dx}{x}\in\Omega_F^1$. The exterior product defines a
map $F^\times\times H^2(F)\to H^3(F)$ carrying $(x,h_A)$ to
$\dlog(x)\wedge h_A$. By definition, $\kappa_3(\dlog(x)\wedge h_A) =
\overline{\bpf{x}\kappa_2(h_A)}$, and $\kappa_2(h_A) = \overline q_A$
for any quadratic form $q_A\in I^2_q(F)$ such that $e_2(q_A)=[A]$,
hence $\kappa_3$ defines an isomorphism
\begin{equation}
  \label{eq:iso}
  H^3(F)/\dlog(F^\times)\wedge h_A \stackrel{\sim}{\longrightarrow}
  \overline I_q^3(F)/\overline{\bpf{F^\times}q_A}.
\end{equation}
If $\deg(A)=8$ and $\sigma$ is a symplectic involution on $A$, let
$\pi_h(A,\sigma)\in H^3(F)$ be defined by the equation
\[
  \kappa_3\bigl(\pi_h(A,\sigma)\bigr) = \overline{\pi(\sigma)} \in
  \overline I_q^3(F).
\]
Theorem~\ref{thm:main} readily shows that if $\sigma$, $\sigma'$ are
symplectic involutions on $A$ and $w\in\Symd(\sigma)$ is such that
$\sigma'=\Int(w)\circ\sigma$, then
\begin{equation}
  \label{eq:rel}
  \pi_h(A,\sigma') = \dlog(w)\wedge h_A + \pi_h(A,\sigma).
\end{equation}
Therefore, we can mimic Definition~\ref{def:Delta} by setting
\[
  \Delta_h(A) = \pi_h(A,\sigma) + \dlog(F^\times)\wedge h_A \in
  H^3(F)/\dlog(F^\times)\wedge h_A.
\]
From the definitions, it is clear that the
isomorphism~\eqref{eq:iso} maps $\Delta_h(A)\in
H^3(F)/\dlog(F^\times)\wedge h_A$ to $\Delta(A)\in \overline
I^3_q(F)/\overline{\bpf{F^\times}q_A}$.

The discussion in~\cite[\S4a]{MNZ} can now be reproduced without any
restriction on the characteristic: 
\eqref{eq:rel} shows that $\pi_h$ defines a nondecomposable
cohomological invariant of the adjoint symplectic group
$\mathbf{PGSp}_8$, and $\Delta_h$ is nontrivial generically.

\section{Descent of Brauer classes}
\label{sec:quadesc}

Throughout this section, $K$ denotes a quadratic extension of an
arbitrary field $F$ (of characteristic~$2$). We write $N_{K/F}$ for
the norm map $N_{K/F}\colon K^\times\to 
F^\times$ (which is the squaring map $\sq_K$ when $K$ is inseparable
over $F$), and also for the map $N_{K/F}\colon \Br_2(K)\to \Br_2(F)$
that is given by corestriction when $K$ is separable over $F$, and that is
functorially 
induced by the squaring field homomorphism $\sq_K\colon K\to F$ when
$K$ is inseparable over $F$ (see~\cite[\S3.13, \S4.1]{J} or
\cite[\S3]{Bingol}).

\begin{thm}
  \label{thm:quadesc}
  Let $E$ be a central simple $K$-algebra such that $2[E]=0$ in
  $\Br(K)$. There exists a central simple $F$-algebra $E'$ such that
  $2[E']=0$ and $[E']_K=[E]$ if and only if $N_{K/F}[E]=0$. When this
  condition holds, the algebra $E'$ can be chosen of degree
  $\deg(E')=2\deg(E)$. 
\end{thm}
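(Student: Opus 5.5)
The plan is to show necessity by a direct computation, and sufficiency by an explicit construction that also yields the degree bound.

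\emph{Necessity.} If $[E']_K=[E]$ with $2[E']=0$, then $N_{K/F}[E]=N_{K/F}([E']_K)$. In the separable case, corestriction composed with restriction is multiplication by $[K:F]=2$, so this class is $2[E']=0$. In the inseparable case, $\sq_K$ restricted to $F$ is the Frobenius of $F$. The map it induces on $\Br_2(F)$ sends $[E']$ to the class of the Frobenius twist $E'\otimes_{F,\mathrm{Frob}}F$, which is $2[E']=0$. The identity to use is that for a quaternion algebra $[a,b)$ the twist is $[a^2,b^2)\sim[a,b)\otimes[a,b)$; this extends to all of $\Br_2(F)$ by additivity and the symbol presentation.

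\emph{Sufficiency, separable case.} Write $K=F(\wp^{-1}(c))$. Using the exact sequence $\Br_2(F)\to\Br_2(K)\xrightarrow{\mathrm{cor}}\Br_2(F)$ would only give $[E]$ in the image, with no control on the degree. So I would construct $E'$ directly. Let $\iota$ be the nontrivial automorphism of $K$. The hypothesis $\mathrm{cor}[E]=0$ means $E\otimes_K{}^\iota E$ is split. Equivalently, ${}^\iota E\simeq E^{op}\simeq E$, using $2[E]=0$ and equal degrees. Hence $E$ admits a $\iota$-semilinear automorphism $\tau$ with $\tau^2=\Int(g)$ for some $g\in E^\times$. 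I would then form the generalized cyclic algebra $E'=E\oplus Ez$, with $zx=\tau(x)z$ and $z^2=g$, adjusting $g$ so that $\tau(g)=g$. This is a central simple $F$-algebra of degree $2\deg E$ containing $K$, and its centralizer of $K$ is $E$, so $[E']_K=[E]$. It remains to show $2[E']=0$. Here I would use that $[E']$ lies in $\mathrm{cor}$ of something, or instead rechoose $g$ using the freedom $g\mapsto \lambda g$ with $\lambda\in F^\times$, and $\tau\mapsto\Int(h)\tau$. This freedom changes $[E']$ by $[K,\lambda)$-type classes in the relative Brauer group $\Br(K/F)$, and $2[E']\in\Br(K/F)$. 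Since $2[E']$ restricts to zero on $K$, we have $2[E']=[c,\mu)$ for some $\mu\in F^\times$. It would then suffice to show that $\mu$ can be absorbed, i.e.\ that $\mu$ is a square times a norm up to the allowed modification. \textbf{This adjustment is the main obstacle.} I would first try an alternative construction: write $[E]$ as a sum of symbols and use the projection formula directly on quaternion symbols. For this I expect to need a norm principle for $\Br_2$.

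\emph{Sufficiency, inseparable case.} Write $K=F(\sqrt{x})$. Use Kato's description: $\Br_2(K)\cong H^2(K)$, and the map $N_{K/F}$ corresponds to the map induced by squaring on differentials. Every element of $H^2(K)$ is represented by $\alpha+\beta\,\dlog\sqrt x$ with $\alpha,\beta\in\Omega^1$ coming from $F$-forms after writing coefficients in the basis $1,\sqrt x$. Analyzing the kernel of the squaring map, I would show that the kernel is exactly the image of $H^2(F)$. This would produce $E'$ Brauer-equivalent to a quaternion algebra over $K$ tensored with the extension of an $F$-algebra. For the degree statement, I would realize $E'$ as the centralizer construction $E\otimes$ (a $K$-linear twist) of degree $2\deg E$. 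This uses that $E$ embeds in some $F$-algebra of degree $2\deg E$ containing $K$ as its centralizer, which holds by the double centralizer theorem once $[E]$ is in the image of restriction.
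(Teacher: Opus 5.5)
Your necessity arguments are fine. In the separable case your construction is also sound. You get the $\iota$-semilinear automorphism $\tau$ directly from ${}^\iota E\simeq E^{op}\simeq E$, where the paper instead takes the composite of a unitary involution and an involution of the first kind. After normalizing $g$ by Hilbert~90, forming $E'=E\oplus Ez$ gives a central simple $F$-algebra of degree $2\deg E$ with $[E']_K=[E]$.

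You then stop at the step you call ``the main obstacle'', and the remedy you propose cannot work. Changing $g$ by $\lambda\in F^\times$ changes $[E']$ by $[c,\lambda)_F$, which is $2$-torsion. Changing $\tau$ by an inner automorphism gives an isomorphic algebra. So neither move ever affects $2[E']$. In fact no adjustment is needed: the identity you used for necessity gives $2[E']=\mathrm{cor}([E']_K)=\mathrm{cor}[E]=0$ at once. (The paper proves $2[E']=0$ differently, by extending the first-kind involution to $E'$ with $z\mapsto z$.)

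The inseparable case has a real gap. Saying ``analyzing the kernel of the squaring map, I would show that the kernel is exactly the image of $H^2(F)$'' restates the claim rather than proving it. Your proposed normal form, writing coefficients in the basis $1,\sqrt x$, does not make classes visibly come from $F$.

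The missing idea, which the paper uses, is Hochschild's theorem: for $K/F$ purely inseparable, restriction $\Br(F)\to\Br(K)$ is surjective. One way to see this is that $G_K=G_F$ has $2$-cohomological dimension at most~$1$ in characteristic~$2$, and $K_s^\times/F_s^\times$ is $2$-torsion. So $[E]=[E']_K$ for some $E'$. This $E'$ need not be $2$-torsion a priori, but your necessity identity gives $2[E']=N_{K/F}([E']_K)=N_{K/F}[E]=0$.

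If you prefer to stay with Kato's differentials, the following works. Take a $2$-basis $\{\sqrt x\}\cup B$ of $K$, where $\{x\}\cup B$ is a $2$-basis of $F$. Using $c\,\dlog e\equiv c^2\dlog e$ modulo $\wp(\Omega^1_K)+dK$, every class can be written as $\mathrm{res}(\omega_0)+f\,\dlog\sqrt x$ with $\omega_0\in H^2(F)$ and $f\in F$. Then $N_{K/F}$ sends it to the class of $[f,x)_F$. When that class is split, Proposition~\ref{prop:descquat}(b) shows that $[f,\sqrt x)_K$ descends; its proof does not use the theorem.

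Finally, the degree claim needs more than the double centralizer theorem. You need $\mathrm{ind}(E')\mid 2\,\mathrm{ind}(E)$. This holds because a maximal subfield of the division algebra Brauer-equivalent to $E$ has degree $2\,\mathrm{ind}(E)$ over $F$ and splits $E'$.
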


The analogous statement in characteristic different from~$2$ is
well-known, see~\cite[Th.~99.13]{EKM}.

\begin{proof}
  Suppose first that $K$ is separable over $F$, and let $\iota$ denote
  the nontrivial $F$-automorphism of $K$. If $[E']_K=[E]$ for some
  central simple $F$-algebra $E'$, then $N_{K/F}[E]=2[E']$
  by~\cite[(3.13)(5)]{BoI}. The ``only if'' part of the statement
  follows.

  For the converse, recall from~\cite[(3.1)]{BoI} that when $2[E]=0$
  and $N_{K/F}[E]=0$, then $E$ carries an involution $\sigma$ of the
  first kind (i.e., whose restriction to $K$ is the identity) and an
  involution $\tau$ of the second kind, whose restriction to $K$ is
  $\iota$. 
  The composition $\tau\circ\sigma\circ\tau$ is an involution of the first kind of $E$, therefore $\tau\circ\sigma\circ\tau=\Int(u)\circ \sigma$ for some invertible element $u\in E^\times$ satisfying $\sigma(u)=u$. 
  It follows that the $\iota$-semilinear automorphism $\tau\circ \sigma$ of $E$ satisfies $(\tau\circ\sigma)^2=\Int(u)$; 
  in particular, it commutes with $\Int(u)$. 
  Hence, $\Int\bigl(\tau\circ\sigma(u)\bigr)=\Int(u)$, so that 
  $\tau(u)=\tau\circ\sigma(u)=\lambda u$ for some $\lambda \in K^\times$. 
Applying $\tau$ to the equality $\tau(u)=\lambda u$, we get that $\lambda\iota(\lambda)=1$.
Hilbert's Theorem~90 yields $\mu\in K^\times$
  such that $\lambda = \mu \iota(\mu)^{-1}$; substituting $\mu u$ for
  $u$, we may assume $\sigma(u)=\tau(u)=u$. Define an $F$-algebra
  $E'$ as a ``generalized crossed product''
  \[
    E'=E\oplus Ez
  \]
  where the multiplication extends the multiplication in $E$ by the
  following rules:
  \[
    z^2=u\qquad\text{and}\qquad
    zx=\sigma\circ\tau(x)z \qquad\text{for $x\in E$.}
  \]
  By~\cite[Th.~11.11]{Albert}, $E'$ is a central simple $F$-algebra
  containing $E$ as the centralizer of $K$, hence
  $[E']_K=[E]$. Moreover, setting $\sigma'(z)=z$ defines an involution
  $\sigma'$ on $E'$ extending $\sigma$ on $E$, hence $2[E']=0$. The
  theorem is thus proved in the case where $K$ is separable over $F$.

  Now, suppose $K$ is inseparable over $F$. By a theorem of Hochschild
  (see~\cite[Th.~4.1.5]{J}), $[E]=[E']_K$ for some central simple
  $F$-algebra $E'$. Since $N_{K/F}$ is induced by the squaring map, we
  have $N_{K/F}[E] = 2[E']$ (see~\cite[Th.~4.1.2]{J}). This proves the first part of the statement.

  Note that every field extension of $K$ that splits $E$ also splits
  $E'$, hence $E'$ is split by field extensions of $F$ of degree
  dividing 
  $2\deg(E)$. Therefore, the central simple $F$-algebra $E'$ may be
  assumed to have $\deg(E')=2\deg(E)$. (This is also clear 
  from the proof above in the case where $K$ is separable over $F$ or
  from the proof of Hochschild's theorem in \cite{J}.)
\end{proof}

When $E$ is a quaternion algebra, Theorem~\ref{thm:quadesc} can be
improved as follows: 

\begin{prop}
  \label{prop:descquat}
  Let $Q$ be a quaternion algebra over $K$.
  \begin{enumerate}
  \item[(a)]
    If $N_{K/F}[Q]=0$, then there exists a quaternion $F$-algebra
    $Q_0$ such that $Q\simeq Q_0\otimes_FK$.
  \item[(b)]
    If $Q=[\alpha,\beta)_K$ with $\alpha\in F$ and $
    N_{K/F}[Q]=0$, then there exists $\gamma\in
    F^\times$ such that $Q\simeq[\alpha,\gamma)_F\otimes_FK$.
  \end{enumerate}
\end{prop}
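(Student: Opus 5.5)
Part (a) follows from part (b), because every quaternion $K$-algebra can be written as $[\alpha,\beta)_K$ with $\alpha\in F$. Indeed, $Q$ contains a separable quadratic extension $K(\wp^{-1}(a))$ of $K$ (or $Q$ is split, in which case $Q\simeq M_2(F)\otimes_FK$ and there is nothing to prove). It then suffices to find a quadratic étale subalgebra of the form $K[X]/(X^2+X+\alpha)$ with $\alpha\in F$. The approach is to use the pure quaternions $Q^0$, a $3$-dimensional $K$-space, on which the map $x\mapsto x^2+x$ restricted to elements of reduced trace $1$ is a quadratic map taking values in $K$. Writing $K=F\oplus Fk$, we need $x$ with trace $1$ and $\wp(x)\in F$. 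This is one $F$-linear condition on a $6$-dimensional affine $F$-space, namely the vanishing of the second coordinate of a quadratic form. That form is a quadratic form in $6$ variables over $F$ with an inhomogeneous part. It is isotropic on a suitable subspace as soon as it has a nontrivial zero in the homogeneous part; otherwise one can argue by dimension count (a quadratic form of dimension $\ge 3$ vanishing on a nonzero vector, or solving linearly after fixing one coordinate). If this turns out to be delicate, I would instead use the standard fact that every quaternion algebra over $K$ contains a quadratic extension of the form $K(\wp^{-1}(\alpha))$ with $\alpha\in F$, via the Albert form of $Q\otimes$ (transfer). So the first step, and the main obstacle, is this reduction to $\alpha\in F$.

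For (b), let $Q=[\alpha,\beta)_K$ with $\alpha\in F$, and set $F_\alpha=F(\wp^{-1}(\alpha))$, $K_\alpha=K\otimes_FF_\alpha$. If $\alpha\in\wp(F)$, then $Q$ is split and we take $\gamma=1$. Otherwise $[Q]$ is the class of the cyclic algebra $(K_\alpha/K,\beta)$. We compute $N_{K/F}[Q]$ via the projection formula: when $K/F$ is separable, $\operatorname{cor}_{K/F}[\alpha,\beta)_K=[\alpha,N_{K/F}(\beta))_F$. When $K/F$ is inseparable, the map induced by squaring sends $[\alpha,\beta)_K$ to $[\alpha^2,\beta^2)_F=[\alpha,\beta^2)_F$, and $\beta^2=N_{K/F}(\beta)$. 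In both cases, then,
\[
N_{K/F}[Q]=[\alpha,N_{K/F}(\beta))_F.
\]

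The hypothesis $N_{K/F}[Q]=0$ therefore says that $N_{K/F}(\beta)$ is a norm from $F_\alpha$, say $N_{K/F}(\beta)=N_{F_\alpha/F}(c)$ with $c\in F_\alpha^\times$. We now treat cases. If $K\simeq F_\alpha$, then $Q$ is split and $\gamma=1$ works. Otherwise $K_\alpha$ is a biquadratic field, and $\beta\,N_{K_\alpha/K}(c_K)^{-1}$ (with $c_K=c\otimes1$) has norm $1$ to $F$. The goal is to show that $\beta$ is congruent modulo $N_{K_\alpha/K}(K_\alpha^\times)$ to an element $\gamma\in F^\times$. This is the classical norm principle for biquadratic extensions: an element of $K$ whose $K/F$-norm is a norm from $F_\alpha$ is, up to norms from $K_\alpha$, in $F$. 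When $K/F$ is separable, I would prove this with Hilbert 90 for the Galois group $(\Z/2)^2$. When $K/F$ is inseparable, the statement follows from $K^2\subset F$: writing $\beta=\beta^2\cdot\beta^{-1}$, we have $\beta^2\in F$, and the task reduces to showing $\beta^{-1}$ times a norm lies in $F$. This is handled using that $[\alpha,\beta)$ is bilinear and $[\alpha,\beta^2)_F=0$, so that $[\alpha,\beta)_K=[\alpha,\beta^{-1}\cdot\beta^2)_K$, followed by a direct computation in $K=F(\sqrt{d})$ writing $\beta=x+y\sqrt d$. Then $Q\simeq[\alpha,\gamma)_F\otimes_FK$, since both algebras are quaternion algebras with the same Brauer class.
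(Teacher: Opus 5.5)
\textbf{There is a genuine gap in (a) when $K/F$ is separable.} You reduce (a) to (b) using the claim that every quaternion $K$-algebra can be written as $[\alpha,\beta)_K$ with $\alpha\in F$. For separable $K$ this claim is false. For such an algebra your own computation gives $N_{K/F}[Q]=[\alpha,N_{K/F}(\beta))_F$, which has index at most $2$. But the norm of a quaternion $K$-algebra can be a biquaternion division algebra (this is the exceptional correspondence $A_1^2\equiv D_2$).

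Your sketch never uses the hypothesis $N_{K/F}[Q]=0$, so it cannot work. Moreover, the condition ``trace-one $x\in Q$ with $\Nrd(x)=x^2+x\in F$'' is a single \emph{quadratic} equation over $F$ in six variables, not a linear one, and in general it has no solution. The ``standard fact via transfer'' you fall back on is false without that hypothesis. The paper relies on exactly this point in the proof of Proposition~\ref{prop:exdeltaLF}: it first shows that $N_{K/F}[Q_0]$ has index at most $2$, and only then invokes \cite[Th.~1.1]{BGBT2} to obtain $\alpha_0\in F$.

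Under the hypothesis your claim is true, but it is then as strong as (a) itself. The paper proves (a) for separable $K$ directly. Since $N_{K/F}[Q]=0$, $Q$ carries a unitary involution by \cite[(3.1)]{BoI}, and Albert's theorem \cite[(2.22)]{BoI} gives $Q\simeq Q_0\otimes_FK$. Only for inseparable $K$ is (a) deduced from (b), and there the reduction is immediate: $\alpha_0\equiv\alpha_0^2\bmod\wp(K)$ with $\alpha_0^2\in F$.

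\textbf{Part (b).} For separable $K$ your route (the norm principle for the biquadratic extension, proved via Hilbert~90) is the paper's, but there is a slip. Since $N_{K_\alpha/K}(c_K)=N_{F_\alpha/F}(c)=N_{K/F}(\beta)$, the element $\beta N_{K_\alpha/K}(c_K)^{-1}$ has norm $N_{K/F}(\beta)^{-1}$, not $1$. What you need is the biquadratic lemma in the form $N_{K/F}(\beta)=\lambda^2N_{K_\alpha/F}(\mu)$ with $\lambda\in F^\times$, followed by Hilbert~90 applied to $\beta\lambda^{-1}N_{K_\alpha/K}(\mu)^{-1}$.

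For inseparable $K$ you give no actual argument: writing $[\alpha,\beta)_K=[\alpha,\beta^{-1}\beta^2)_K$ is a tautology. The missing step is to use the splitting of $[\alpha,\beta^2)_F$ explicitly, namely $\beta^2=\lambda^2+\lambda\mu+\alpha\mu^2$ with $\lambda,\mu\in F$.
\begin{itemize}
\item If $\mu=0$, then $\beta=\lambda\in F$ and $\gamma=\lambda$ works.
\item If $\mu\neq0$, then $(\lambda+\beta)^2+(\lambda+\beta)\mu+\alpha\mu^2=\beta\mu$. So $\beta\mu$ is a norm from $K(\wp^{-1}(\alpha))$, and $\gamma=\mu^{-1}$ works.
\end{itemize}
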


\begin{proof}
  We first prove~(b). The argument depends on whether $K$ is separable
  over $F$ or not. By assumption, the algebra $[\alpha,N_{K/F}(\beta))$ is split.
  
  Assume first $K$ is separable, and let $\iota$
  denote the 
  nontrivial $F$-automorphism of $K$. If $\alpha\in\wp(K)$, then
  $[\alpha,\beta)_K$ is split and we may choose $\gamma=1$. If
  $\alpha\notin\wp(K)$, then $K(\wp^{-1}(\alpha))$ is a biquadratic
  separable 
  field extension of $F$ and $N_{K/F}(\beta)$ is a norm for the quadratic separable extension $F(\wp^{-1}(\alpha))/F$. By the ``biquadratic lemma''
  \cite[Exemple~5.1]{Tignol}, there exist
  $\lambda\in F^\times$ and $\mu\in K(\wp^{-1}(\alpha))$ such that
  \[
    N_{K/F}(\beta) = \lambda^2N_{K(\wp^{-1}(\alpha))/F}(\mu) =
    N_{K/F}\bigl(\lambda N_{K(\wp^{-1}(\alpha))/K}(\mu)\bigr).
  \]
  Hilbert's Theorem~90 then yields $\nu\in K^\times$ such that
  \[
    \beta=\lambda N_{K(\wp^{-1}(\alpha))/K}(\mu) \nu\,\iota(\nu)^{-1}
    = \lambda N_{K/F}(\nu)^{-1} \nu^2 N_{K(\wp^{-1}(\alpha))/K}(\mu)
  \]
  and (b) holds with $\gamma=\lambda N_{K/F}(\nu)^{-1}$.

  Now, assume $K$ is inseparable over $F$. Since
  $[\alpha,N_{K/F}(\beta))_F$ is split, there exist $\lambda$, 
  $\mu\in F$ such that
  \[
    N_{K/F}(\beta)=\beta^2=\lambda^2+\lambda\mu+\alpha\mu^2.
  \]
  If $\mu=0$, then $\beta=\lambda$
  and~(b) holds with $\gamma=\lambda$. If $\mu\neq0$, then
  \[
   \beta^2=\mu^{-2}\bigl((\lambda+\beta)^2+(\lambda+\beta)\mu +
    \alpha\mu^2\bigr)^2,
    \quad\text{hence}\quad
    \beta=\mu^{-1}\bigl((\lambda+\beta)^2+(\lambda+\beta)\mu +
    \alpha\mu^2\bigr),
  \]
  and (b) holds with $\gamma=\mu^{-1}$.

  We deduce (a) from~(b) when $K$ is inseparable over $F$, as follows:
  we write $Q=[\alpha_0,\beta)_K$ for some $\alpha_0\in K$ and
  $\beta\in K^\times$, and observe that $Q=[\alpha_0^2,\beta)_K$ with
  $\alpha_0^2\in F$ because $\alpha_0^2\equiv\alpha_0\bmod\wp(K)$ and
  $K^2\subset F$. Since $N_{K/F}[Q]=[[\alpha_0^2,N_{K/F}(\beta))_F]=0$,
  (b) yields $\gamma\in F^\times$ such that
  $Q\simeq[\alpha_0^2,\gamma)_F\otimes_FK$.

  When $K$ is separable over $F$, the hypothesis $N_{K/F}[Q]=0$
  implies that $Q$ carries an involution of unitary type
  by~\cite[(3.1)]{BoI}. The existence of a quaternion $F$-algebra
  $Q_0$ such that $Q\simeq Q_0\otimes_FK$ then follows from a theorem
  of Albert \cite[(2.22)]{BoI}.
\end{proof}

The last result in this section characterizes the Brauer classes of
quaternion algebras that satisfy a biquadratic descent property. By
definition, the biquadratic field extensions we consider can be
represented as tensor products of two quadratic extensions. We exclude
the case of purely inseparable field extensions, whose features are
significantly different: see the Appendix. Thus, the
biquadratic field extensions $L/F$ 
we consider are either separable---and then Galois with elementary
abelian Galois group---or of the form $L\simeq K_1\otimes_FK_2$ where
$K_1$ is a separable quadratic extension of $F$ and $K_2$ is an
inseparable quadratic extension of $F$. In the latter case, $K_1$ is
uniquely determined as the separable closure of $F$ in $L$, and it is
easily verified that $K_2=\{\ell\in L\mid \ell^2\in F\}$, hence $K_1$
and $K_2$ are the only two quadratic extensions of $F$ in $L$. By
contrast, when $L$ is separable over $F$ it contains three quadratic
extensions of $F$.

\begin{thm}
  \label{thm:biquaddescBr}
  Let $L$ be a biquadratic field extension of $F$ and let $Q$ be a
  quaternion algebra over $L$. If $L$ is not purely inseparable over
  $F$, the following are equivalent:
  \begin{enumerate}
  \item[(i)]
    there exists a central simple $F$-algebra $A$ such that $2[A]=0$
    and $[A]_L=[Q]$;
  \item[(ii)]
    $N_{L/K}[Q]=0$ for every quadratic extension $K$ of $F$ in $L$.
  \end{enumerate}
  When these conditions hold, the algebra $A$ can be chosen of
  degree~$8$, so that $Q$ is isomorphic to the centralizer of $L$ in $A$. 
\end{thm}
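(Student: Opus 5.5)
The implication (i)$\Rightarrow$(ii) is the easy direction. If $[A]_L=[Q]$ with $2[A]=0$, then for each quadratic extension $K\subset L$ we have $[Q]=([A]_K)_L$. Theorem~\ref{thm:quadesc}, applied to the quadratic extension $L/K$ and to the class $[A]_K$, whose double is zero, gives $N_{L/K}[Q]=N_{L/K}\bigl(([A]_K)_L\bigr)=2[A]_K=0$. Concretely, this uses the corestriction formula $\mathrm{cor}\circ\mathrm{res}=2$ when $L/K$ is separable, and the squaring identity $N_{K/F}[E]=2[E']$ from the proof of Theorem~\ref{thm:quadesc} when $L/K$ is inseparable.

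For (ii)$\Rightarrow$(i) I will descend in two steps. First I choose a quadratic subextension $K\subset L$ over which $L/K$ has a good type. When $L\simeq K_1\otimes K_2$ with $K_1$ separable and $K_2$ inseparable, I take $K=K_2$, so that $L/K$ is separable. When $L$ is separable, any $K$ works.

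By (ii), $N_{L/K}[Q]=0$, so Proposition~\ref{prop:descquat}(a) gives a quaternion $K$-algebra $Q_1$ with $Q\simeq Q_1\otimes_KL$. The aim is then to show that $N_{K/F}[Q_1]=0$. Once this holds, Theorem~\ref{thm:quadesc} yields a central simple $F$-algebra $A$ of degree~$4$ with $2[A]=0$ and $[A]_K=[Q_1]$, and hence $[A]_L=[Q]$.

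The main obstacle is that $Q_1$ is determined only up to a class in $\ker(\Br_2(K)\to\Br_2(L))$. This kernel consists of the classes $[\alpha,c)_K$, where $L=K(\wp^{-1}(\alpha))$ with $\alpha$ chosen in $F$, which is possible since $L=K\cdot K'$ for another quadratic extension $K'=F(\wp^{-1}(\alpha))$ of $F$. The task is therefore to adjust $Q_1$ by a suitable $[\alpha,c)_K$ so that the norm vanishes.

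To do this I would use the hypothesis $N_{L/K'}[Q]=0$ and a base change identity of the form $N_{L/K'}(x_L)=(N_{K/F}x)_{K'}$ for $x\in\Br(K)$, valid for the linearly disjoint pair $K,K'$. This identity holds for corestriction, and for the squaring map as well. It gives $(N_{K/F}[Q_1])_{K'}=0$, so $N_{K/F}[Q_1]=[\alpha,f)_F$ for some $f\in F^\times$, because $N_{K/F}[Q_1]$ is a $2$-torsion class of index at most $4$ split by $K'$.

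Next I note that $N_{K/F}[\alpha,c)_K=[\alpha,N_{K/F}(c))_F$. It therefore suffices to show that $f$ is a norm from $K$, modulo norms from $K'$. This is where some extra care is needed: one may have to use the freedom in choosing $Q_1$ more cleverly, or the remaining hypothesis $N_{L/K}[Q]=0$ together with the biquadratic lemma \cite[Exemple~5.1]{Tignol}, to arrange this. An alternative route I would keep in reserve is to redo the generalized crossed product construction from the proof of Theorem~\ref{thm:quadesc} directly, with $Q$ and an involution of the second kind for $L/K'$.

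Finally, for the degree statement: $A$ has degree dividing $4$ and is split by $L$ up to $Q$. Replacing $A$ by $M_2(A)$ or by a Brauer-equivalent algebra of degree~$8$ containing $L$, which is possible since $[L:F]\cdot\deg Q=8$ and $L\otimes A$ is Brauer-equivalent to $Q$, the centralizer of $L$ in $A$ is an $L$-algebra of degree $2$ Brauer-equivalent to $Q$. This centralizer is therefore isomorphic to $Q$.
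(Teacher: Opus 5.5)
\paragraph{A gap at the decisive step.} Your (i)$\Rightarrow$(ii) and your overall architecture for the converse match the paper's. The paper also descends $Q$ to a quaternion $K$-algebra $Q_1$, taking $K$ inseparable in the mixed case. It then corrects $[Q_1]$ by a class $[\alpha,c)_K$ that dies over $L$, and concludes with Theorem~\ref{thm:quadesc}. However, the step that carries all the content is left open: you must show that in $N_{K/F}[Q_1]=[[\alpha,f)_F]$ one may take $f\in N_{K/F}(K^\times)N_{K'/F}(K'^\times)$. Your hint cannot supply this. The condition $N_{L/K}[Q]=0$ was already consumed in producing $Q_1$, and the condition at $K'$ says only that $N_{K/F}[Q_1]$ is split by $K'$.

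In the separable case, the missing input is (ii) at the \emph{third} quadratic subfield $K''$, which your argument never uses. It cannot be dispensed with. Over a global field, take a prime of $F$ that splits in $K''$ but not in $K$, and a quaternion $L$-algebra ramified exactly at the two places of $L$ above it. This algebra satisfies $N_{L/K}[Q]=N_{L/K'}[Q]=0$, since the local invariants are $\frac12+\frac12$, but $N_{L/K''}[Q]\neq0$.

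With this input the argument closes. The identity $N_{L/K''}[Q]=(N_{K/F}[Q_1])_{K''}$ shows that $N_{K/F}[Q_1]$ is split by both $K'=F(\wp^{-1}(\alpha'))$ and $K''=F(\wp^{-1}(\alpha''))$, where $K=F(\wp^{-1}(\alpha'+\alpha''))$. Lemma~\ref{lem:inscomslot} then gives $\gamma\in F^\times$ with $N_{K/F}[Q_1]=[[\alpha',\gamma)_F]=[[\alpha'',\gamma)_F]$. Hence $[\alpha'+\alpha'',\gamma)_F$ is split, so $\gamma=N_{K/F}(c)$ for some $c\in K^\times$, and $N_{K/F}[Q_1\otimes_K[\alpha',c)_K]=0$.

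In the mixed case $L\simeq K'\otimes_FK$ with $K=F(\sqrt\rho)$, there is no third subfield, so (ii) is exhausted. Your reserve route via an involution of the second kind for $L/K'$ is also unavailable, because $L/K'$ is inseparable. The missing observation is that for inseparable $K/F$, composing $N_{K/F}$ with scalar extension back to $K$ gives multiplication by~$2$. So $N_{K/F}[Q_1]$ is also split by $K$ itself.

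It is therefore the class of a quaternion algebra $Q_0$ containing $i$ with $i^2+i=\alpha$ and $r$ with $r^2=\rho$. The element $j=ir+ri$ is nonzero, since otherwise $i\in F(r)$. It commutes with $r$, hence lies in $F(r)\simeq K$, and it satisfies $ij+ji=j$. Thus $Q_0\simeq[\alpha,j^2)_F$, and $N_{K/F}[Q_1\otimes_K[\alpha,j)_K]=0$.

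\paragraph{A smaller point on the degree.} After such a correction the class has index up to~$4$, so Theorem~\ref{thm:quadesc} yields $A$ of degree~$8$, not~$4$. This $A$ contains $K$ with centralizer $Q_1\otimes_K[\alpha,c)_K$. The centralizer of $L=K(\wp^{-1}(\alpha))\subset[\alpha,c)_K$ in $A$ is then directly $Q_1\otimes_KL\simeq Q$, which gives the degree statement without further argument.
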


The proof below uses the following ``inseparable common slot lemma''
(a ``separable common slot lemma'' is given
in~\cite[Lemma~3]{Rowen}): 

\begin{lemma}
  \label{lem:inscomslot}
  Let $\alpha_1$, $\alpha_2\in F$ and $\beta_1$, $\beta_2\in F^\times$
  be such that the quaternion $F$-algebras $[\alpha_1,\beta_1)_F$ and
  $[\alpha_2,\beta_2)_F$ are isomorphic. There exists $\gamma\in
  F^\times$ such that
  \[
    [\alpha_1,\beta_1)_F\simeq [\alpha_1,\gamma)_F \simeq
    [\alpha_2,\gamma)_F \simeq [\alpha_2,\beta_2)_F.
  \]
\end{lemma}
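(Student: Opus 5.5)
The idea is to compare the two norm forms through their pure parts. Write $Q=[\alpha_1,\beta_1)_F\simeq[\alpha_2,\beta_2)_F$. For a quaternion algebra $[\alpha,\beta)_F$ generated by $i,j$ with $i^2+i=\alpha$, $j^2=\beta$, $ij=j(i+1)$, the subspace $\{x : \mathrm{Trd}(x)=0\}$ contains $F$. We use the standard characterization (in characteristic $2$): $Q\simeq[\alpha,\gamma)_F$ if and only if $Q$ contains an element $i$ with $i^2+i=\alpha$ and an element $j\neq0$ with $j^2=\gamma$ and $ji=(i+1)j$.

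So it suffices to find a single element $j\in Q^\times$ with $j^2=\gamma\in F^\times$ that anticommutes in the above sense with both $i_1$ and $i_2$. Here $i_1,i_2\in Q$ are fixed elements with $i_k^2+i_k=\alpha_k$, obtained from the given presentations transported into $Q$. The condition on $j$ is that $j i_k j^{-1}=i_k+1$ for $k=1,2$.

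To find such a $j$, consider the $F$-linear maps $\phi_k\colon Q\to Q$, $\phi_k(x)=xi_k+i_kx+x$. The kernel $J_k=\ker\phi_k$ consists of the $x$ with $xi_k=(i_k+1)x$. This is a $2$-dimensional subspace, namely $J_k=F(i_k)\,j_k$, and its nonzero elements are invertible with squares in $F$. Indeed, $(\ell j_k)^2=\ell\bar\ell\beta_k=N(\ell)\beta_k$.

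Each $J_k$ lies in the $3$-dimensional space $Q^0=\{x:\mathrm{Trd}(x)=0\}$. This holds because such elements have trace zero: $\mathrm{Trd}(\ell j_k)=0$. Two $2$-dimensional subspaces of a $3$-dimensional space meet nontrivially, so there is a nonzero $j\in J_1\cap J_2$. Then $j$ is invertible, $\gamma:=j^2\in F^\times$, and $j$ conjugates $i_k$ to $i_k+1$ for $k=1,2$. Hence $Q\simeq[\alpha_k,\gamma)_F$ for both $k$, which gives the chain of isomorphisms in the statement.

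The step needing care is confirming that $J_k\subset Q^0$ and $\dim J_k=2$ for every $k$, including the split case. In the split case $Q\simeq M_2(F)$ and $i_k$ may have reducible polynomial; for example, $\alpha_k\in\wp(F)$ makes $F(i_k)\simeq F\times F$.

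In that case the elements of $J_k$ need not all be invertible, since $N(\ell)$ can vanish. So $j\in J_1\cap J_2$ might square to $0$, and the argument needs a fallback. If $Q$ is split, we can instead take $\gamma=1$, since $[\alpha,1)_F$ is split for every $\alpha$. If $Q$ is a division algebra, every nonzero $j$ is invertible, and $j^2\in F$ is automatic: from $ji_kj^{-1}=i_k+1$ it follows that $j^2$ commutes with $i_k$ and with $j$, hence is central.

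The dimension count $\dim J_k=2$ is verified directly from the basis $1,i_k,j_k,i_kj_k$ adapted to the $k$-th presentation.
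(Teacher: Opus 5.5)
Your proposal is correct and follows the paper's proof. Both arguments take $\gamma=1$ when $Q$ is split; otherwise they note that $F(i_1)j_1$ and $F(i_2)j_2$ are $2$-dimensional subspaces of the $3$-dimensional space of reduced-trace-zero elements, and set $\gamma=j^2$ for a nonzero $j$ in their intersection. Your identification of these subspaces as the kernels $\{x\mid xi_k=(i_k+1)x\}$ is extra detail the paper leaves implicit.
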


\begin{proof}
  Let $Q$ be a quaternion $F$-algebra isomorphic to
  $[\alpha_1,\beta_1)_F$ and $[\alpha_2,\beta_2)_F$.
  We may assume $Q$ is a division algebra, for otherwise we may
  readily choose $\gamma=1$. By hypothesis, there exist $i_1$, $j_1$,
  $i_2$, $j_2\in Q$ such that
  \[
    i_r^2+i_r=\alpha_r,\qquad
    j_r i_r+i_r j_r = j_r
    \quad\text{and}\quad j_r^2=\beta_r\qquad\text{for
      $r=1$, $2$.}
  \]
  The vector space of reduced trace zero elements in $Q$ has
  dimension~$3$ and contains the $2$-dimensional subspaces $F(i_1)j_1$
  and $F(i_2)j_2$. By dimension count, these subspaces have a nonzero
  intersection. If $j\in F(i_1)j_1\cap F(i_2)j_2$ is nonzero, then the
  lemma holds with $\gamma=j^2$.
\end{proof}

\begin{proof}[Proof of Theorem~\ref{thm:biquaddescBr}]
  (i)~$\Rightarrow$~(ii)
  Assuming~(i), we have $[A_{K}]_L=[Q]$ for every quadratic extension
  $K$ of $F$ in $L$, hence
  Theorem~\ref{thm:quadesc} shows that~(ii) holds.

  (ii)~$\Rightarrow$~(i)
  The proof depends on whether $L$ is separable over $F$ or
  not. Assume first $L$ is separable, and let
  $L=F\bigl(\wp^{-1}(\alpha_1), \wp^{-1}(\alpha_2)\bigr)$ for some
  $\alpha_1$, $\alpha_2\in F$. The
  quadratic extensions of $F$ in $L$ are then
  $K_1=F\bigl(\wp^{-1}(\alpha_1)\bigr)$,
  $K_2=F\bigl(\wp^{-1}(\alpha_2)\bigr)$ and
  $K_3=F\bigl(\wp^{-1}(\alpha_1+\alpha_2)\bigr)$. Assuming
  $N_{L/K_3}[Q]=0$, Proposition~\ref{prop:descquat}(a) yields a
  quaternion $K_3$-algebra $Q_3$ such that $Q=Q_3\otimes_{K_3}L$. Then
  $N_{L/K_i}[Q] = \bigl(N_{K_3/F}[Q_3]\bigr)_{K_i}$ for $i=1$, $2$,
  hence~(ii) shows that $N_{K_3/F}[Q_3]$ is split by $K_1$ and
  $K_2$. By the inseparable common slot lemma~\ref{lem:inscomslot},
  there exists $\gamma\in F^\times$ such that
  \begin{equation}
    \label{eq:biquadsepdesc}
    N_{K_3/F}[Q_3] = [[\alpha_1,\gamma)_F] = [[\alpha_2,\gamma)_F].
  \end{equation}
  Then $[\alpha_1+\alpha_2,\gamma)_F$ is split, hence there exists
  $\beta\in K_3^\times$ such that
  $\gamma=N_{K_3/F}(\beta)$. From~\eqref{eq:biquadsepdesc} it follows
  that $N_{K_3/F}[Q_3] = N_{K_3/F}[[\alpha_1,\beta)_{K_3}]$, hence
  \[
    N_{K_3/F}[Q_3\otimes_{K_3}[\alpha_1,\beta)_{K_3}] = 0.
  \]
  Theorem~\ref{thm:quadesc} yields a central simple $F$-algebra $A$ of
  degree~$8$ 
  such that $2[A]=0$ and $[A]_{K_3} =
  [Q_3\otimes_{K_3}[\alpha_1,\beta)_{K_3}]$. We
  may then identify $Q_3\otimes_{K_3}[\alpha_1,\beta)_{K_3}$ with the
  centralizer of $K_3$ in $A$. Identifying $L$ with
  $K_3\bigl(\wp^{-1}(\alpha_1)\bigr)\subset [\alpha_1,\beta)_{K_3}$,
  we see that the centralizer of $L$ in $A$ is
  $Q_3\otimes_{K_3}L$. This concludes the proof in the case where $L$
  is separable over $F$ since
  $Q=Q_3\otimes_{K_3}L$.
  \medbreak

  Now, assume $L=F\bigl(\wp^{-1}(\alpha), \sqrt\rho\bigr)$ for some
  $\alpha\in F$ and $\rho\in F^\times$, hence the quadratic extensions
  of $F$ in $L$ are
  $K_1=F\bigl(\wp^{-1}(\alpha)\bigr)$ and $K_2=F(\sqrt\rho)$. Since
  $N_{L/K_2}[Q]=0$, 
  Proposition~\ref{prop:descquat} yields a quaternion $K_2$-algebra
  $Q_2$ such that $Q=Q_2\otimes_{K_2}L$. Then
  $N_{L/K_1}[Q]=\bigl(N_{K_2/F}[Q_2]\bigr)_{K_1}$, hence (ii) shows
  that $N_{K_2/F}[Q_2]$ is split by $K_1$. But
  $\bigl(N_{K_2/F}[Q_2]\bigr)_{K_2} = 2[Q_2]=0$, hence
  $N_{K_2/F}[Q_2]$ is also split by $K_2$. Therefore, $N_{K_2/F}[Q_2]$
  is the Brauer class of a quaternion $F$-algebra $Q_0$ containing
  $K_1$ and $K_2$ as maximal subfields. Let $i$, $r\in Q_0$ be such
  that $i^2+i=\alpha$ and $r^2=\rho$, and let $j=ir+ri\in Q_0$. If
  $j=0$, then $i$ commutes with $r$ hence $i\in F(r)$. This is
  impossible since $i$ is separable over $F$ whereas $F(r)=K_2$ is an
  inseparable extension; hence $j\neq0$. Since $r^2\in F^\times$, it
  is readily seen that $j$ commutes with $r$, hence $j\in
  F(r)=K_2$. On the other hand, computation yields $ij+ji=j$, hence
  $j\notin F$ and $i$, $j$ generate $Q_0$ as a quaternion $F$-algebra
  $[\alpha,\beta)_F$ with $\beta={j}^2\in F^\times$ and
  $K_2=F(j)$. Now, $N_{K_2/F}[[\alpha,j)_{K_2}] = [[\alpha,\beta)_F]
  = N_{K_2/F}[Q_2]$, hence
  \[
    N_{K_2/F}[Q_2\otimes_{K_2}[\alpha,j)_{K_2}] = 0.
  \]
  Theorem~\ref{thm:quadesc} yields a central simple $F$-algebra $A$ of
  degree~$8$ 
  such that $2[A]=0$ and
  $[A]_{K_2}=[Q_2\otimes_{K_2}[\alpha,j)_{K_2}]$. We may assume
  $A$ contains $K_2$, with centralizer
  $Q_2\otimes_{K_2}[\alpha,j)_{K_2}$. Identifying $L$ with
  $K_2\bigl(\wp^{-1}(\alpha)\bigr)\subset [\alpha,j)_{K_2}$, we see
  that the centralizer of $L$ in $A$ is $Q_2\otimes_{K_2}L$. This
  completes the proof since $Q_2\otimes_{K_2}L=Q$.
\end{proof}

\section{Quadratic descent of biquaternion algebras}
\label{sec:Quadbiquat}

Throughout this section, $K$ denotes a quadratic extension of the 
field $F$ of characteristic~$2$, and $B$ is a biquaternion
$K$-algebra. Theorem~\ref{thm:quadesc} yields a necessary and
sufficient condition for the Brauer class $[B]$ to descend to
$\Br_2(F)$, but even when this condition holds the algebra $B$ itself
may not descend to a biquaternion $F$-algebra. 
Our goal in this section is to define an invariant $\delta_{K/F}(B)$ that vanishes if and only if this stronger property holds. As the definition of the $\Delta$-invariant in Section~\ref{sec:Delta}, our definition of $\delta_{K/F}(B)$ relies on discriminant Pfister forms of symplectic involutions on algebras of degree~$8$. It turns out to be equivalent to the definition based on Albert forms introduced by Barry--Chapman--Laghribi \cite[\S3]{BaCL} when $K$ is separable over $F$ and by Laghribi--Lorenz \cite[Prop.~6.4]{LL} when $K$ is inseparable.

\medbreak

We use the same notation $N_{K/F}$ as in the preceding section, and
fix a nonzero $F$-linear functional $t_K\colon K\to F$ such that
$t_K(1)=0$. (If $K$ is separable over $F$, we may take for $t_K$ the
trace map $T_{K/F}$.)  
Throughout, we assume $N_{K/F}[B]=0$, so Theorem~\ref{thm:quadesc}
shows that there exists a central simple $F$-algebra $A$ of degree~$8$
such that 
$[B]=[A]_K$ and $2[A]=0$. Let $\sigma_B$
be a symplectic involution on $B$. By~\cite[(4.14)]{BoI}, $\sigma_B$
extends (not uniquely) to an involution $\sigma_A$ on $A$, which is necessarily
symplectic since $1\in\Symd(\sigma_B) \subset\Symd(\sigma_A)$.
The following result shows that the discriminant Pfister form
$\pi(\sigma_A)$ can be determined from 
$\sigma_B$. In particular, it does not depend on the choices of $A$ and $\sigma_A$.

\begin{prop}
  \label{prop:sNrp}
  The transfer $(t_K)_*(\Nrp_{\sigma_B})$ is Witt-equivalent to a scalar
  multiple of $\pi(\sigma_A)$. 
\end{prop}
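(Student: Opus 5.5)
The approach is to compute $\pi(\sigma_A)$ directly from the construction recalled at the beginning of Section~\ref{sec:Delta}, using a biquadratic \'etale algebra adapted to the subfield $K$. Since $\sigma_B$ restricts to the identity on $K$ and $K$ commutes with $B$, we have $K\subset\Sym(\sigma_A)$. When $K$ is separable I would like $K$ itself to play the role of the quadratic subalgebra fixed by $\alpha$. By~\cite[Th.~6.10]{BGBT1}, which was used in the same way in the proof of Lemma~\ref{lem:comphyp}, there is a biquadratic \'etale $L\subset\Symd(\sigma_A)$ containing $K$ as the fixed algebra of some $\alpha$. I would first check that $K\subset\Symd(\sigma_A)$, which should follow from $1\in\Symd(\sigma_B)$: if $1=x+\sigma_B(x)$, then $k=kx+\sigma_A(kx)$ for $k\in K$. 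I would then choose $L$ of the form $L=K\otimes_F L_0$ with $L_0\subset\Symd(\sigma_A)$. The resulting $W$ is $8$-dimensional and its squaring map $\sq_W\colon W\to K$ is $K$-valued, with $\pi(\sigma_A)$ a scalar multiple of $T_{K/F}\circ\sq_W$.

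The key step is to identify $W$, or at least its Witt class after transfer, inside $B$. I would instead choose $L=L_1\subset\Symd(\sigma_B)$, a quadratic \'etale $K$-subalgebra that is defined over $F$ in the appropriate sense. More naturally, I would take $L=K\otimes L_0$ with $L_0$ a quadratic \'etale $F$-algebra whose scalar extension $L_0\otimes K$ lies in $\Symd(\sigma_B)$. Then $w\in W$ commuting with $K$ forces $W\subset B$, and $W$ becomes the $K$-subspace $W_B=\{w\in\Symd(\sigma_B)\mid w\ell=\bar\ell w\}$. By~\cite[Prop.~5.6 and 6.5]{BGBT1}, applied over $K$ to the biquaternion algebra $B$ (the same statement used in the proof of Lemma~\ref{lem:comphyp}), we have $(\Symd(\sigma_B),\Nrp_{\sigma_B})\simeq (L_0\otimes K,N)\perp (W_B,\sq_{W_B})$. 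Transferring by $t_K$ kills the first summand in the Witt group, since $(t_K)_*(N_{L_0\otimes K/K})=(t_K)_*(N_{L_0/F}\otimes K)$ equals $N_{L_0/F}$ times $(t_K)_*\bil{1}$, which is hyperbolic because $t_K(1)=0$. Hence $(t_K)_*\Nrp_{\sigma_B}$ is Witt-equivalent to $(t_K)_*\sq_{W_B}$. Since transfers by different nonzero functionals vanishing at $1$ differ by a scalar multiple of the twist, this is a scalar multiple of $T_{K/F}\circ\sq_W$ in the separable case.

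The main obstacle is the existence of such an $L_0$ (a quadratic \'etale $F$-algebra with $L_0K\subset\Symd(\sigma_B)$ and $L_0K\cdot L_0K$ biquadratic in $A$), together with the inseparable case. When $K$ is inseparable there is no $\alpha$ fixing $K$, so the BGBT construction must be run with a different quadratic subalgebra. For the existence question I would first try a generic-element argument: $\Symd(\sigma_B)$ is a $K$-space of dimension~$6$, and I would look for $F$-rational elements with separable Pfaffian polynomial. Failing that, I would bypass the construction and use the decomposition $\Srp_{\sigma_A}=[1,1]+\pi(\sigma_A)+\widehat\pi(\sigma_A)$ from~\cite{CRAS}. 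One would compute $\Srp_{\sigma_A}$ on $\Symd(\sigma_A)=\Symd(\sigma_B)\oplus\Symd(\sigma_A)^{\perp}$ and show that its restriction to the $\Symd(\sigma_B)$ part is $(t_K)_*\Nrp_{\sigma_B}$ up to $I^4_q$ corrections. Uniqueness of the $3$-fold Pfister summand would then give the claim, noting that a $12$-dimensional form Witt-equivalent to a Pfister multiple forces the similarity. This second route treats both separable and inseparable $K$ uniformly, and it is the one I would pursue if the first stalls.
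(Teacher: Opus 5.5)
Your separable case is the paper's argument, and the ``main obstacle'' you raise there does not exist. \cite[Th.~6.10]{BGBT1}, which you already invoke, gives a biquadratic \'etale $L\subset\Symd(\sigma_A)$ containing $K$ as the fixed algebra of $\alpha$. Such an $L$ is automatically $K\otimes_FL_0$, and it centralizes $K$, so it lies in $B$. By \cite[Prop.~6.1]{BGBT1}, $B\cap\Symd(\sigma_A)=\Symd(\sigma_B)$, so $L\subset\Symd(\sigma_B)$; you also use this fact tacitly when you put $W$ inside $\Symd(\sigma_B)$. From there \cite[Prop.~5.6 and 6.5]{BGBT1} and the hyperbolicity of $(t_K)_*(N_{L/K})$ finish exactly as you say. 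No generic-element argument is needed.

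The genuine gap is the inseparable case, which carries most of the weight of the proposition and for which you give no argument. Your fallback through $\Srp_{\sigma_A}$ breaks down there. For $x\in B$ one has $\Nrd_A(x)=N_{K/F}(\Nrd_B(x))$, and likewise for reduced characteristic polynomials. When $K/F$ is inseparable, $N_{K/F}$ is squaring, so $\Prp_{\sigma_A,x}=\Prp_{\sigma_B,x}^2$ and $\Srp_{\sigma_A}(x)=\Trp_{\sigma_B}(x)^2$ for $x\in\Symd(\sigma_B)$. Thus $\Srp_{\sigma_A}$ is totally singular on $\Symd(\sigma_B)$. This subspace is totally isotropic for the polar form, so there is no orthogonal splitting $\Symd(\sigma_B)\oplus\Symd(\sigma_B)^\perp$, and the restriction does not see $\Nrp_{\sigma_B}$ at all. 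Even for separable $K$ the restriction is $x\mapsto T_{K/F}(\Nrp_{\sigma_B}(x))+N_{K/F}(\Trp_{\sigma_B}(x))$, so the route is not uniform either.

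The underlying difficulty is that an inseparable $K$ lies in no biquadratic \'etale algebra. So $K$ cannot serve as the fixed algebra in the construction of $\pi(\sigma_A)$, and the separable-case trick of reading $\sq_W$ off $\Nrp_{\sigma_B}=N_{L/K}\perp\sq_W$ inside $B$ has no analogue. An explicit comparison is required, and the paper supplies it in four steps.

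First, find $i_1\in\Symd(\sigma_B)$ with $i_1^2+i_1\in F$. The even Clifford algebra of $\Nrp_{\sigma_B}$ restricted to $\ker\Trp_{\sigma_B}$ gives $i_1^2+i_1\in K$, and replacing $i_1$ by $i_1^2$ uses $K^2\subset F$. This reduces $(t_K)_*(\Nrp_{\sigma_B})$ to $(t_K)_*(\sq_V)$ with $V=\{v\in\Symd(\sigma_B)\mid vi_1=(i_1+1)v\}$.

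Second, take a biquadratic $L\subset\Symd(\sigma_A)$ containing $K_1=F(i_1)$. Then $\Symd(\sigma_A)=L\oplus W_1\oplus W_2\oplus W_3$ with $V\subset W_2\oplus W_3$.

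Third, write $r\in K$ with $t_K(r)=1$ as $r=k_1+w_1$, where $k_1\in K_1$ and $0\neq w_1\in W_1$.

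Fourth, use $rv=vr$ and $W_i*W_j\subset W_k$. When $k_1\notin F$, this shows that $w_2+w_3\mapsto w_2$ is a similitude from $(t_K)_*(\sq_V)$ to $(T_{K_2/F})_*(\sq_{W_2})$ with multiplier $T_{K_1/F}(k_1)^{-1}$. When $k_1\in F$, $K_2$ and $w_1$ generate a $\sigma_A$-stable quaternion subalgebra whose centralizer $C$ satisfies $B\simeq C\otimes_FK$, and both forms are hyperbolic.

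Your ``$F$-rational element with separable Pfaffian polynomial'' would at best give you the first of these steps.
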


\begin{proof}
  The argument branches into two parts, depending on whether $K$ is
  separable over $F$ or not. First, assume $K$ is
  separable. By~\cite[Th.~6.10]{BGBT1}, there exists a
  biquadratic \'etale
  $F$-algebra $L\subset\Symd(\sigma_A)$ containing $K$ as a subalgebra
  fixed under an element of order~$2$ in its Galois group. Let
  $\alpha$ denote this order~$2$ element and define $W$ as previously:
  \[
    W=\{w\in\Symd(\sigma_A)\mid w\ell = \alpha(\ell)w\text{ for all
      $\ell\in L$}\}.
  \]
  Note that $W\subset\Symd(\sigma_B)$ because $K$ is fixed under
  $\alpha$ and $\Symd(\sigma_B)=B\cap\Symd(\sigma_A)$
  by~\cite[Prop.~6.1]{BGBT1}. Letting $\sq_W\colon W\to K$ denote the
  squaring map 
  $\sq_W(w)=w^2$, the transfer $(t_K)_*(\sq_W)$ is a scalar multiple of
  $\pi(\sigma_A)$, by definition of the discriminant Pfister
  form. By~\cite[Prop.~5.6 and 6.5]{BGBT1}, we have
  \[
    (\Symd(\sigma_B),\Nrp_{\sigma_B}) = (L, N_{L/K}) \perp (W,\sq_W),
  \]
  hence $(t_K)_*(\Nrp_{\sigma_B}) = (t_K)_*(N_{L/K}) \perp
  (t_K)_*(\sq_W)$. But 
  $L$ is a biquadratic $F$-algebra, hence the norm form $N_{L/K}$ is
  defined over $F$, and $(t_K)_*(N_{L/K})$ is hyperbolic. Thus, the proof
  is complete in the case where $K$ is separable over $F$.
  \renewcommand{\qed}{\relax}
\end{proof}

In the case where $K$ is inseparable over $F$, we need two lemmas.

\begin{lemma}
  \label{lem:insep1}
  Assume $K$ is an inseparable quadratic extension of $F$. There
  exists $i_1\in \Symd(\sigma_B)$ such that $i_1^2+i_1\in F$. For any
  such $i_1$, the $K$-vector space
  \[
    V=\{v\in\Symd(\sigma_B)\mid vi_1=(i_1+1)v\}
  \]
  has dimension~$4$ and the squaring map $\sq_V\colon v\mapsto v^2$ is
  a nonsingular quadratic form $V\to K$ whose transfer
  $(t_K)_*(\sq_V)$ is Witt-equivalent to $(t_K)_*({\Nrp}_{\sigma_B})$.
\end{lemma}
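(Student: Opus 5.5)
The approach mirrors the separable case of Proposition~\ref{prop:sNrp}, with the separable quadratic $F$-algebra $F(i_1)$ playing the role of $L$ and the centralizer structure of $i_1$ replacing the Galois action.

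\emph{Existence of $i_1$.} Since $K$ is inseparable, $K^2\subset F$. I will use~\cite[Th.~6.10]{BGBT1}, or its analogue for biquaternion algebras over $K$, which provides a separable quadratic subalgebra $K(\ell)\subset\Symd(\sigma_B)$ with $\ell^2+\ell=c\in K$. Writing $c=c_0^2\cdot c_0^{-2}c$ does not help directly, so I argue as follows. The element $i_1$ is wanted with $i_1^2+i_1\in F$. Replace $\ell$ by $\ell'=\ell+\lambda$ with $\lambda\in K$; then $\ell'^2+\ell'=c+\lambda^2+\lambda$. Since $K=F\oplus Fr$ with $r^2=\rho\in F$, write $c=c_1+c_2r$. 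Choosing $\lambda=c_2r$ gives $\lambda^2+\lambda=c_2^2\rho+c_2r$, and the $r$-components cancel. Hence $i_1=\ell+c_2r\in\Symd(\sigma_B)$ (because $K\subset\Symd(\sigma_B)$, as $1\in\Symd$) satisfies $i_1^2+i_1=c_1+c_2^2\rho\in F$.

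\emph{Structure of $V$ and the decomposition.} Set $M=K(i_1)$, a separable quadratic field extension of $K$, or \'etale, defined over $F$ by $F(i_1)\otimes_FK$. By Skolem--Noether, conjugation by $i_1$ decomposes $B$ into the centralizer $C_B(M)$ and the $\alpha$-semilinear part. Intersecting with $\Symd(\sigma_B)$, and arguing exactly as in~\cite[Prop.~5.6 and 6.5]{BGBT1} (where only the \'etale quadratic subalgebra in $\Symd$ is used), I expect the orthogonal decomposition
\[
  (\Symd(\sigma_B),\Nrp_{\sigma_B})\simeq (M,N_{M/K})\perp(V,\sq_V),
\]
with $\dim_K V=4$, $v^2\in K$ for $v\in V$, and $\sq_V$ nonsingular. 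The main obstacle is to check that these BGBT1 results apply with only a quadratic \'etale algebra in $\Symd(\sigma_B)$ rather than a biquadratic one. If they require a biquadratic one, I would instead pick a biquadratic \'etale $K$-algebra in $\Symd(\sigma_B)$ containing $M$ via~\cite[Th.~6.10]{BGBT1}, and obtain the decomposition in two stages.

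\emph{Transfer.} Apply $(t_K)_*$ to the decomposition. Since $N_{M/K}=(N_{F(i_1)/F})_K$ is defined over $F$, its transfer is $(t_K)_*(\bil1)\cdot N_{F(i_1)/F}$. Now $(t_K)_*(\bil1)$ is the bilinear form $(x,y)\mapsto t_K(xy)$ on $K$, which has $1$ as an isotropic vector since $t_K(1)=0$; hence it is metabolic, and its product with a quadratic form is hyperbolic. Therefore $(t_K)_*(\Nrp_{\sigma_B})$ is Witt-equivalent to $(t_K)_*(\sq_V)$.
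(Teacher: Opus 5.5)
Your proposal is correct, and its core coincides with the paper's proof. The decomposition $(\Symd(\sigma_B),\Nrp_{\sigma_B})\simeq(K(i_1),N_{K(i_1)/K})\perp(V,\sq_V)$ is taken from \cite[Prop.~5.6 and 6.5]{BGBT1}, and the first summand disappears under $(t_K)_*$ because $K(i_1)\simeq K\otimes_FF(i_1)$; your metabolic-form justification of this step is fine.

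The obstacle you anticipate does not occur. Since $\dim_KK(i_1)=\frac12\deg B$, Prop.~5.6 of \cite{BGBT1} already shows that $K(i_1)$ is neat, so Prop.~6.5 applies directly. Your fallback, as stated, could not work anyway. Every element of $\Symd(\sigma_B)$ is annihilated by its Pfaffian characteristic polynomial, which has degree~$2$. A biquadratic \'etale $K$-algebra would be generated by a single element, since $K$ is infinite, so no such algebra lies in $\Symd(\sigma_B)$.

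The genuine difference is in producing $i_1$. The paper identifies $(B,\sigma_B)$ with $(C_0(\varphi),\gamma)$ for $\varphi=\Nrp_{\sigma_B}\rvert_{\ker\Trp_{\sigma_B}}$. It takes the image of $f_1e_1+e_2f_2$, which gives $i_1\in\Symd(\sigma_B)\setminus K$ with $i_1^2+i_1\in K$, and then replaces $i_1$ by $i_1^2$, using $K^2\subset F$. You instead shift by $c_2r\in K$, which works equally well.

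Your source for $\ell$ (``Th.~6.10 or its analogue'') is vague, however. A cleaner justification is to take any $x\in\Symd(\sigma_B)$ with $\Trp_{\sigma_B}(x)=1$, which exists because $\Trp_{\sigma_B}\neq0$. Then $x^2+x=\Nrp_{\sigma_B}(x)\in K$ and $x\notin K$, since $\Trp_{\sigma_B}$ vanishes on $K$. This is more elementary than the paper's Clifford-algebra computation, although the latter makes $i_1$ explicit in terms of a basis of $\ker\Trp_{\sigma_B}$.
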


\begin{proof}
  Consider the $5$-dimensional $K$-vector space $\Symd(\sigma_{B})^0
  := 
  \ker\Trp_{\sigma_B}$. By~\cite[(15.16)]{BoI}, the restriction
  $\varphi$ of $\Nrp_{\sigma_B}$ to $\Symd(\sigma_B)^0$ is a
  nondegenerate quadratic form, and the inclusion
  $\Symd(\sigma_B)^0\hookrightarrow B$ induces a canonical
  isomorphism  
  of algebras with involution from the even Clifford algebra
  $C_0(\varphi)$ with its canonical involution $\gamma$ to
  $(B,\sigma_B)$. If $e_1$, $f_1$ and $e_2$, $f_2$ are
  symplectic bases of two mutually orthogonal nonsingular planes in
  $\Symd(\sigma_B)^0$, then in the even Clifford algebra
  $e_1f_1+f_1e_1=e_2f_2+f_2e_2=1$, and $e_1f_1$ and $e_2f_2$ commute. Hence, we have
  \begin{multline}
    \label{eq:clifford}
    \gamma(e_1f_1e_2f_2) = f_2e_2f_1e_1 = f_1e_1+e_2f_2+e_1f_1e_2f_2\\
    \text{and}\qquad
    (f_1e_1+e_2f_2)^2 = f_1e_1+e_2f_2+
    \varphi(e_1)\varphi(f_1)+\varphi(e_2)\varphi(f_2).
  \end{multline}
  Let $i_1\in B\setminus K$ be the image of $f_1e_1+e_2f_2$ under the
  canonical 
  isomorphism $C_0(\varphi)\simeq B$. From~\eqref{eq:clifford} it
  follows 
  that $i_1\in\Symd(\sigma_B)$ and $i_1^2+i_1\in K$. Substituting
  $i_1^2$ for $i_1$, we may (and will) assume that $i_1^2+i_1\in F$,
  hence $F(i_1)\subset\Symd(\sigma_B)$ is a quadratic \'etale
  $F$-algebra. Since $\dim_KK(i_1)=\frac12\deg B$, Proposition~5.6
  in~\cite{BGBT1} shows that $K(i_1)$ is a neat subalgebra of
  $(B,\sigma_B)$, which means that $K(i_1)$ is an \'etale $K$-algebra
  in $\Sym(\sigma_B)$ over which $B$ is a free left module. 
  From~\cite[Prop.~6.5]{BGBT1} it follows that
  \[
    \Symd(\sigma_B)=K(i_1)\oplus V \qquad\text{and}\qquad
    \Nrp_{\sigma_B}=N_{K(i_1)/K}\perp \sq_V.
  \]
  The form $N_{K(i_1)/K}$
  is defined over $F$ since $K(i_1)\simeq K\otimes_FF(i_1)$, hence
  $(t_K)_*(N_{K(i_1)/K})$ is hyperbolic and $(t_K)_*(\Nrp_{\sigma_B})$ is
  Witt-equivalent to $(t_K)_*(\sq_V)$.
\end{proof}

To proceed with the proof of Proposition~\ref{prop:sNrp}, fix an
element $i_1\in\Symd(\sigma_B)$ as in Lemma~\ref{lem:insep1}. The
$F$-algebra $K_1:=F(i_1)$ is quadratic \'etale and lies in
$\Symd(\sigma_A)$ since $\Symd(\sigma_B)\subset\Symd(\sigma_A)$. It is
neat in $(A,\sigma_A)$ because $K(i_1)$ is neat in $(B,\sigma_B)$,
$K(i_1)$ is a free left $K_1$-module and $A$ is a free left
$B$-module, hence $A$ is a free left $K_1$-module. By~\cite[Th.~4.1
and 6.10]{BGBT1}, there exists an \'etale biquadratic $F$-algebra
$L\subset\Symd(\sigma_A)$ containing $K_1$ as the subalgebra fixed
under an element $\alpha_1$ of order~$2$ of its Galois group. Let
$\alpha_2$ and $\alpha_3$ be the two other elements of order~$2$ of this Galois group, with fixed subfields respectively denoted by $K_2$ and $K_3$. Consider
\[
  W_i=\{w\in\Symd(\sigma_A)\mid w\ell = \alpha_i(\ell)w \text{ for all
    $\ell\in L$}\}.
\]
Lemma 7.1 and Theorem~7.3 in \cite{BGBT3} or Propositions~4 and 5 in~\cite{CRAS}
show that $w_i^2\in K_i$ for $i\in\{1,2,3\}$ and 
\begin{equation}
  \label{eq:insep1}
  \Symd(\sigma_A) = L\oplus W_1 \oplus W_2\oplus W_3.
\end{equation}
 Moreover, we have 
\begin{multline*} L\oplus W_1=\{w\in\Symd(\sigma_A)\mid w i_1=i_1 w\}\mbox{ and }\\
W_2\oplus W_3= \{w\in\Symd(\sigma_A)\mid w i_1=(i_1+1) w\}. \end{multline*}
Writing $x*y=xy+yx$ for $x$, $y\in A$, we also have 
\[
  w_i*w_j\in W_k \qquad\text{for $w_i\in W_i$, $w_j\in W_j$ and
    $\{i,j,k\} = \{1,2,3\}$}. 
\]

\begin{lemma}
  \label{lem:insep2}
  Let $r\in K$ be such that $t_K(r)=1$. Then $r=k_1+w_1$ for some
  $k_1\in K_1$ and some nonzero $w_1\in W_1$. Moreover, $V\subset
  W_2\oplus W_3$ and for $v=w_2+w_3\in V$ with $w_2\in W_2$ and
  $w_3\in W_3$,
  \begin{equation}
    \label{eq:insep2}
    w_1*w_2= T_{K_1/F}(k_1)w_3, \qquad w_1*w_3 = T_{K_1/F}(k_1)w_2
    \quad\text{and}\quad
    w_2*w_3 = t_K\bigl(\sq_V(v)\bigr)w_1.
  \end{equation}
\end{lemma}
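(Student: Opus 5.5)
The plan is to exploit the decomposition \eqref{eq:insep1}, $\Symd(\sigma_A)=L\oplus W_1\oplus W_2\oplus W_3$, together with the description of $L\oplus W_1$ and $W_2\oplus W_3$ as the centralizer and ``twisted centralizer'' of $i_1$. I will proceed in three steps.

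\emph{First step: $r=k_1+w_1$ with $w_1\neq0$.} Since $r\in K\subset F\oplus Fr$ commutes with $i_1\in B$, and $K\subset\Symd(\sigma_A)$ (as $1\in\Symd(\sigma_A)$ and $K$ is central in $B$, so $r=r\cdot 1=r(x+\sigma_A(x))$ can be rewritten as $rx+\sigma_A(rx)$ for $x\in B$ with $x+\sigma_B(x)=1$), we get $r\in L\oplus W_1$. Write $r=\ell+w_1$ with $\ell\in L$.

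To see $\ell\in K_1$, I will use that $r^2\in F$. We have $r^2=\ell^2+\ell w_1+w_1\ell+w_1^2$. The component in $W_1$ is $\ell w_1+w_1\ell=(\ell+\alpha_1(\ell))w_1$, and it must vanish. If $w_1\ne0$, this forces $\ell\in K_1$. If $w_1=0$, then $r\in L$ is commutative with $L$; but then $r\in L$ with $r^2\in F$, $r\notin F$, so $r$ lies in the purely inseparable part. This is impossible, since $L$ is étale over $F$. Hence $w_1\ne0$ and $\ell=k_1\in K_1$.

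\emph{Second step: $V\subset W_2\oplus W_3$.} This is immediate from the definition of $V$: every $v\in V\subset\Symd(\sigma_B)\subset\Symd(\sigma_A)$ satisfies $vi_1=(i_1+1)v$, and $W_2\oplus W_3$ is exactly that subspace.

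\emph{Third step: the relations \eqref{eq:insep2}.} For $v=w_2+w_3\in V$, the vector $v$ commutes with $r\in K$, so $r*v=0$. Expanding,
\[
  0=r*v=k_1*w_2+k_1*w_3+w_1*w_2+w_1*w_3 .
\]
For $w_i\in W_i$ we have $k_1w_i=w_i\alpha_i(k_1)$. On $K_1$, the automorphism $\alpha_2$ restricts to the nontrivial automorphism, so
\[
  k_1*w_2=(k_1+\alpha_2(k_1))w_2=T_{K_1/F}(k_1)w_2,
\]
and similarly $k_1*w_3=T_{K_1/F}(k_1)w_3$. By the product rule $w_i*w_j\in W_k$, the term $w_1*w_2$ lies in $W_3$ and $w_1*w_3$ lies in $W_2$. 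Separating components in $W_2$ and $W_3$ then yields the first two identities.

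For the last identity, compute
\[
  v^2=w_2^2+w_3^2+w_2*w_3,
\]
where $w_2^2\in K_2$, $w_3^2\in K_3$, and $w_2*w_3\in W_1$. Since $v^2=\sq_V(v)\in K=F\oplus Fr$, write $\sq_V(v)=a+br$ with $a,b\in F$, so that $t_K(\sq_V(v))=b$ because $t_K(1)=0$ and $t_K(r)=1$. Then
\[
  a+bk_1+bw_1=w_2^2+w_3^2+w_2*w_3 .
\]
Comparing $W_1$-components in \eqref{eq:insep1} gives $w_2*w_3=bw_1$. The remaining $L$-components are consistent, which is fine because nothing further is needed there.

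\emph{Main obstacle.} The delicate point is justifying that $K\subset L\oplus W_1$ and that components can be compared. The latter requires the directness of \eqref{eq:insep1} and the membership $\sq_V(v)\in K\subset L\oplus W_1$, which itself comes from the first step; this is where I expect to spend the most care.
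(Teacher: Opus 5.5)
Your proposal is correct and follows essentially the same route as the paper. You place $r$ in $L\oplus W_1$ by commutation with $i_1$, exclude $w_1=0$ by separability of $L$, and kill the $W_1$-component of $r^2$ to get $k_1\in K_1$; then you split $r*v=0$ into its $W_2$- and $W_3$-components and compare $W_1$-components of $\sq_V(v)=a+br$. The differences are cosmetic: you check $r\in\Symd(\sigma_A)$ by hand, and you compute $k_1*w_2$, $k_1*w_3$ via $\alpha_2,\alpha_3$ instead of via the defining relation of $V$.
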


\begin{proof}
  As $\sigma_B$ is symplectic, we have
  $r\in\Symd(\sigma_B)\subset\Symd(\sigma_A)$, hence we may decompose
  $r$ as a sum of terms from the direct sum~\eqref{eq:insep1}. But
  $r$ centralizes $K_1$ since $K_1\subset B$, hence $r\in L\oplus
  W_1$. Let $r=k_1+w_1$ with $k_1\in L$ and $w_1\in W_1$. If $w_1=0$,
  then $r$ lies in $L$. This is impossible since $t_K(r)=1$ implies
  $r\notin F$, hence $r$ is inseparable over $F$ whereas $L$ is
  separable over $F$. Therefore, $w_1\neq0$. As $r^2\in F^\times$, it
  follows that
  \begin{equation}
    \label{eq:insep3}
    k_1^2+w_1^2+k_1*w_1\in F^\times.
  \end{equation}
  We have $k_1^2$, $w_1^2\in L$ and $k_1*w_1\in W_1$,
  hence~\eqref{eq:insep3} implies $k_1*w_1=0$. Since $w_1\neq0$, the
  equations
  \[
    0=k_1*w_1 = k_1w_1+w_1k_1 = \bigl(k_1+\alpha_1(k_1)\bigr)w_1
  \]
  show that $k_1\in K_1$.

  By definition, $V\subset\Symd(\sigma_B)\subset\Symd(\sigma_A)$, and
  $vx = \alpha_1(x)v$ for all $x\in K_1$, hence $V\subset W_2\oplus
  W_3$. For $v\in V$, let $v=w_2+w_3$ for some $w_2\in W_2$ and
  $w_3\in W_3$. Since $v$ commutes with $r=k_1+w_1$, it follows that
  \begin{equation}
    \label{eq:insep4}
    k_1*v+w_1*w_2+w_1*w_3=0.
  \end{equation}
  Note that $k_1*v=k_1v+vk_1 = T_{K_1/F}(k_1)v = T_{K_1/F}(k_1)w_2 +
  T_{K_1/F}(k_1)w_3$. Substituting in~\eqref{eq:insep4} we obtain
  \[
    (T_{K_1/F}(k_1)w_2+w_1*w_3) + (T_{K_1/F}(k_1)w_3 + w_1*w_2) = 0.
  \]
  But $w_1*w_3\in W_2$ and $w_1*w_2\in W_3$, hence the first term on
  the left side lies in~$W_2$ whereas the second term lies in
  $W_3$. Each term must vanish since $W_2\oplus W_3$ is a direct
  sum. This proves the first two equations in~\eqref{eq:insep2}, and
  it only remains to compute $w_2*w_3$.

  For this, observe that $\sq_V(v)\in K$, hence $\sq_V(v) =
  \lambda_0+\lambda_1r$ for some $\lambda_0$, $\lambda_1\in F$. Taking
  images under $t_K$, we obtain $\lambda_1=t_K\bigl(\sq_V(v)\bigr)$, hence
  \[
    \sq_V(v) = (\lambda_0+\lambda_1k_1) + t_K\bigl(\sq_V(v)\bigr)w_1
    \qquad\text{with $\lambda_0+\lambda_1k_1\in L$ and
      $t_K\bigl(\sq_V(v)\bigr)w_1\in W_1$.}
  \]
  Comparing with
  \[
    \sq_V(v) = (w_2+w_3)^2 = w_2^2+w_3^2 + w_2*w_3 \qquad
    \text{with $w_2^2$, $w_3^2\in L$ and $w_2*w_3\in W_1$,}
  \]
  we see that $w_2*w_3 = t_K\bigl(\sq_V(v)\bigr)w_1$. This completes the
  proof of~\eqref{eq:insep2}.
\end{proof}

\begin{proof}[Proof of Proposition~\ref{prop:sNrp}, completed]
  We assume $K$ is inseparable over $F$ and we need to show that
  $(t_K)_*(\Nrp_{\sigma_B})$ is Witt-equivalent to a scalar multiple of
  $\pi(\sigma_A)$. Lemma~\ref{lem:insep1} provides us with a
  quadratic space $(V,\sq_V)$ over $K$ such that
  $(t_K)_*(\Nrp_{\sigma_B})$ is Witt-equivalent to $(t_K)_*(\sq_V)$
  and with 
  $i_1\in\Symd(\sigma_B)$ such that $i_1^2+i_1\in F$. Let $K_1=F(i_1)$
  and choose a biquadratic \'etale $F$-algebra
  $L\subset\Symd(\sigma_A)$ containing $K_1$ as the subalgebra fixed
  under an element $\alpha_1$ of order~$2$ in its Galois group. Using
  the same notation as in Lemma~\ref{lem:insep2}, we
  obtain a direct sum decomposition of $\Symd(\sigma_A)$ as
  in~\eqref{eq:insep1}. Fix $r\in K$ such that $t_K(r)=1$ and
  write $r=k_1+w_1$ with $k_1\in K_1$ and $w_1\in W_1$ as in
  Lemma~\ref{lem:insep2}.
 
  Suppose first $k_1\notin F$, hence $T_{K_1/F}(k_1)\neq0$. For $v\in
  V$, let $v=w_2+w_3$ with $w_2\in W_2$ and $w_3\in W_3$. The
  equations~\eqref{eq:insep2} yield $w_2*w_3=t_K\bigl(\sq_V(v)\bigr)w_1$
  and $w_3=T_{K_1/F}(k_1)^{-1}w_1*w_2$. Eliminating $w_3$ from these
  equations yields
  \begin{equation}
    \label{eq:insep5}
    t_K\bigl(\sq_V(v)\bigr)\,w_1=T_{K_1/F}(k_1)^{-1}w_2*(w_1*w_2).
  \end{equation}
  Note that since $w_2^2\in K_2$,
  \[
    w_2*(w_1*w_2) = w_2w_1w_2+w_1^2w_2 + w_1w_2^2 + w_2w_1w_2 =
    (w_2^2+\alpha_1(w_2^2))w_1 = T_{K_2/F}(w_2^2)w_1.
  \]
  Substituting in~\eqref{eq:insep5} and taking into account that $w_1$
  is nonzero, we obtain
  \[
    t_K\bigl(\sq_V(v)\bigr) = T_{K_1/F}(k_1)^{-1}T_{K_2/F}(w_2^2) =
    T_{K_1/F}(k_1)^{-1}T_{K_2/F}\bigl(\sq_{W_2}(w_2)\bigr),
  \]
  where $\sq_{W_2}\colon W_2\to K_2$ is the squaring map. Therefore,
  the projection $V\to W_2$ parallel to $W_3$, which maps $v$ to
  $w_2$, is a similitude from $(t_K)_*(\sq_V)$ to
  $(T_{K_2/F})_*(\sq_{W_2})$, with multiplier
  $T_{K_1/F}(k_1)^{-1}$. By definition, $\pi(\sigma_A)$ is a scalar
  multiple of $(T_{K_2/F})_*(\sq_{W_2})$. Therefore, $\pi(\sigma_A)$
  is isometric to a scalar multiple of $(t_K)_*(\sq_V)$, which is
  Witt-equivalent to $(t_K)_*(\Nrp_{\sigma_B})$, and the proof is complete
  when $k_1\notin F$.

  For the rest of the proof, assume $k_1\in F$. Then $K=F(r)=F(w_1)$,
  hence $w_1^2\in F^\times$. Since conjugation by $w_1$ yields the
  nontrivial $F$-automorphism of $K_2$, the $F$-algebra $Q\subset A$
  generated by $K_2$ and $w_1$ is a quaternion algebra stable under
  $\sigma_A$ and containing $K$. The restriction of $\sigma_A$ to $Q$
  is not the canonical symplectic involution on $Q$ because
  $K_2\subset\Symd(\sigma_A)$. Therefore, by~\cite[(2.23)]{BoI},
  $\sigma_A$ restricts to a symplectic involution $\sigma_C$ on the
  centralizer $C$ of $Q$ in $A$, which is a biquaternion algebra such
  that $B\simeq C\otimes_F K$ since $A=C\otimes_FQ$. It follows that
  $\Nrp_{\sigma_B} = (\Nrp_{\sigma_C})_K$, hence
  $(t_K)_*(\Nrp_{\sigma_B})$ is hyperbolic. On the other hand, $C$
  decomposes into a tensor product of two quaternion $F$-algebras
  stable under $\sigma_C$ by~\cite[(15.16)]{BoI}, hence $A$ decomposes
  into a tensor product of three quaternion $F$-algebras stable under
  $\sigma_A$. Theorem~7.11 of~\cite{BGBT3} or Proposition~8
  of~\cite{CRAS} then show that $\pi(\sigma_A)$ is hyperbolic. Thus,
  $(t_K)_*(\Nrp_{\sigma_B})$ and $\pi(\sigma_A)$ are Witt-equivalent.
\end{proof}

Proposition~\ref{prop:sNrp} shows that for a given symplectic
involution $\sigma_B$ on $B$, all the involutions $\sigma_A$ on a
central simple $F$-algebra $A$ of degree~$8$ containing $B$ that
extend 
$\sigma_B$ have the same discriminant Pfister form, and that form does
not even depend on the choice of the algebra $A$ but only on
$\sigma_B$. We next describe how the form behaves under a change of
involution on $B$.

\begin{prop}
  \label{prop:deltaKF}
  Let $q_B\in I^2_q(K)$ be such that $e_2(q_B)=[B]$, and let
  $t_K\colon K\to F$ be a nonzero $F$-linear functional such that
  $t_K(1)=0$. If $\sigma_A$ and 
  $\sigma'_A$ are involutions on $A$ that extend symplectic
  involutions $\sigma_B$, $\sigma'_B$ on $B$, then in $\overline
  I^3_q(F)$
  \[
    \overline{\pi(\sigma'_A)} \equiv \overline{\pi(\sigma_A)}
    \quad\bmod\overline{(t_K)_*(\bpf{K^\times}q_B)}.
  \]
\end{prop}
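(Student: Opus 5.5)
Since $\sigma'_B$ and $\sigma_B$ are symplectic involutions on $B$, we can write $\sigma'_B=\Int(v)\circ\sigma_B$ for some unit $v\in\Symd(\sigma_B)$ (by \cite[(2.7)]{BoI}, using that both are symplectic). By Proposition~\ref{prop:sNrp}, $\pi(\sigma_A)$ depends only on $\sigma_B$ and not on the chosen extension. We may therefore choose $\sigma'_A=\Int(v)\circ\sigma_A$, which indeed extends $\sigma'_B$, with $v\in\Symd(\sigma_B)\subset\Symd(\sigma_A)$. Theorem~\ref{thm:main} then gives
\[
  \overline{\pi(\sigma'_A)}=\overline{\pi(\sigma_A)}+\overline{\bpf{\Nrp_{\sigma_A}(v)}q_A}.
\]
Hence it suffices to show that $\overline{\bpf{\Nrp_{\sigma_A}(v)}q_A}$ lies in $\overline{(t_K)_*(\bpf{K^\times}q_B)}$.

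The natural candidate is $\overline{(t_K)_*(\bpf{\Nrp_{\sigma_B}(v)}q_B)}$. Here $A$ is a free $B$-module of rank $2$, so $A\simeq M_2(B)$ over $K$ up to Brauer equivalence, and $\Nrd_A(v)=N_{K/F}(\Nrd_B(v))$. Taking square roots gives $\Nrp_{\sigma_A}(v)=N_{K/F}(\Nrp_{\sigma_B}(v))$. With $\mu=\Nrp_{\sigma_B}(v)\in K^\times$, we therefore want the projection formula
\[
  \overline{\bpf{N_{K/F}(\mu)}q_A}=\overline{(t_K)_*(\bpf{\mu}q_B)}\qquad\text{in }\overline I^3_q(F),
\]
where $q_B$ may be taken to be $(q_A)_K$, since $[B]=[A]_K$ and only classes modulo $I^3_q(K)$ matter.

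By Frobenius reciprocity, $(t_K)_*(\bpf{\mu}(q_A)_K)=(t_K)_*(\bpf{\mu})\cdot q_A$. The claim then reduces to a statement about the transfer of $1$-fold bilinear Pfister forms: $(t_K)_*(\bpf{\mu})\equiv\bpf{N_{K/F}(\mu)}\bmod I^2(F)$. Here $(t_K)_*$ is the Scharlau transfer of bilinear forms, which maps $I(K)$ into $I(F)$ because $(t_K)_*\bil{1}$ is metabolic or hyperbolic. Multiplying by $q_A\in I^2_q(F)$ then lands in $I^3_q$, and the error term lies in $I^4_q$.

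The main obstacle is this norm principle for the bilinear transfer, that is, that the determinant (discriminant) of $(t_K)_*\bil{\mu}$ equals $N_{K/F}(\mu)$ up to squares. I would verify it directly on a basis. In the separable case $K=F(\theta)$ with $t_K=T_{K/F}$, the Gram matrix of $(x,y)\mapsto T(\mu xy)$ has determinant equal to $N_{K/F}(\mu)$ times the discriminant of the trace form, which is a square in characteristic~$2$. In the inseparable case, $t_K$ kills $F$ and the Gram matrix on the basis $1,r$ is $\left(\begin{smallmatrix}t(\mu)&t(\mu r)\\ t(\mu r)&t(\mu r^2)\end{smallmatrix}\right)$. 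Writing $\mu=a+br$, its determinant is $b^2\rho+a^2$ up to squares (with $r^2=\rho$), which is $N_{K/F}(\mu)=\mu^2$. Both bilinear forms are $2$-dimensional with determinant $N_{K/F}(\mu)$, hence they agree modulo $I^2(F)$, since $I/I^2$ is detected by the determinant. Combining these steps gives the stated congruence.
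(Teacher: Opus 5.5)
Your proof is correct. It follows a different route from the paper's main argument: it is essentially the alternative the authors mention in the remark right after the proposition.

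\textbf{The paper's route.} The paper stays inside $B$. It writes $\sigma'_B=\Int(u)\circ\sigma_B$ and uses \cite[(16.18)]{BoI}, namely $\Nrp_{\sigma'_B}\simeq\bil{\Nrp_{\sigma_B}(u)}\Nrp_{\sigma_B}$. It then transfers to $F$ and applies Proposition~\ref{prop:sNrp} to both $\sigma_B$ and $\sigma'_B$. Since $\Nrp_{\sigma_B}$ is itself an Albert form, the difference $(t_K)_*(\bpf{\Nrp_{\sigma_B}(u)}\Nrp_{\sigma_B})$ lies in the required subgroup by inspection. No norm computation is needed, and there is no appeal to Theorem~\ref{thm:main}, hence none of its index reduction through \cite{HLS}.

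\textbf{Your route.} You work in $A$ via Theorem~\ref{thm:main}. You use Proposition~\ref{prop:sNrp} only to justify replacing $\sigma'_A$ by $\Int(v)\circ\sigma_A$. You then have to push $\overline{\bpf{\Nrp_{\sigma_A}(v)}q_A}$ down to a transfer from $K$. This costs two facts: the identity $\Nrp_{\sigma_A}(v)=N_{K/F}(\Nrp_{\sigma_B}(v))$, and the congruence $(t_K)_*(\bpf\mu)\equiv\bpf{N_{K/F}(\mu)}\bmod I^2(F)$. These are the same ingredients as in Lemma~\ref{lem:tech} and in the identification $\overline{(s_K)_*(\bpf{K^\times}q_B)}=\overline{\bpf{N_{K/F}(K^\times)}q_A}$ at the start of Section~\ref{sec:nontriv}. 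In exchange, your argument shows the mechanism is the same as in Proposition~\ref{prop:defdeltaLF}: a change of involution by an element of $B$, followed by Theorem~\ref{thm:main}. It also makes the indeterminacy group visibly equal to $\overline{\bpf{N_{K/F}(K^\times)}q_A}$.

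\textbf{Cosmetic points to tidy.}
\begin{itemize}
\item The phrase ``$A\simeq M_2(B)$ over $K$'' is not the right justification for $\Nrd_A(v)=N_{K/F}(\Nrd_B(v))$. A clean argument: $A\simeq B^2$ as a left $B$-module gives $\Nrd_A(v)^8=N_{K/F}(\Nrd_B(v))^8$, and the Frobenius is injective.
\item $(t_K)_*(\bpf\mu)$ is $4$-dimensional. It is Witt-equivalent to the $2$-dimensional $(t_K)_*(\bil\mu)$ only because $(t_K)_*(\bil1)$ is metabolic, since $t_K(1)=0$.
\item The formula $\det (t_K)_*(\bil\mu)=N_{K/F}(\mu)\det (t_K)_*(\bil1)$ handles every admissible $t_K$ uniformly. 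Here $\det (t_K)_*(\bil1)$ is a square. So there is no need to take $t_K=T_{K/F}$ or to split into separable and inseparable cases.
\end{itemize}
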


\begin{proof}
  Let $\sigma'_B = \Int(u)\circ\sigma_B$ for some unit
  $u\in\Symd(\sigma_B)$. By~\cite[(16.18)]{BoI} we have
  \[
    \Nrp_{\sigma_B}-\Nrp_{\sigma'_B} = \bpf{\Nrp_{\sigma_B}(u)}
    \Nrp_{\sigma_B} \qquad\text{in $I^2_q(K)$,}
  \]
  hence
  \begin{equation}
    \label{eq:deltaKF}
    (t_K)_*(\Nrp_{\sigma_B}) - (t_K)_*(\Nrp_{\sigma'_B}) =
    (t_K)_*(\bpf{\Nrp_{\sigma_B}(u)}\Nrp_{\sigma_B}) \in
    (t_K)_*(\bpf{K^\times}\Nrp_{\sigma_B}). 
  \end{equation}
  By~\cite[(16.8)]{BoI}, $\Nrp_{\sigma_B}$ is an Albert form of $B$,
  hence $\overline{\Nrp_{\sigma_B}} = \overline{q_B}$
  and~\eqref{eq:deltaKF} readily yields
  \[
    \overline{(t_K)_*(\Nrp_{\sigma_B})} \equiv
    \overline{(t_K)_*(\Nrp_{\sigma'_B})} \quad \bmod
    \overline{(t_K)_*(\bpf{K^\times} q_B)}.
  \]
  This proves the proposition since
  $\overline{(t_K)_*(\Nrp_{\sigma_B})}=\overline{\pi(\sigma_A)}$ and 
  $\overline{(t_K)_*(\Nrp_{\sigma'_B})} = \overline{\pi(\sigma'_A)}$ by
  Proposition~\ref{prop:sNrp}.
\end{proof}
\begin{remark}  An alternative proof can be obtained from
  Theorem~\ref{thm:main} instead of~\cite[(16.18)]{BoI}.
  \end{remark}

We now define an invariant of the biquaternion algebra $B$, which also depends on the quadratic extension $K/F$. 
\begin{defn}
  \label{def:deltaKF}
  Let $B$ be a biquaternion algebra over a quadratic extension $K$ of
  a field $F$, let $q_B\in I^2_q(K)$ be such that
  $e_2(q_B)=[B]$ and let $s_K\colon K\to F$ be a nonzero $F$-linear
  functional. Assume $N_{K/F}[B]=0$. By Theorem~\ref{thm:quadesc} 
  and \cite[(4.14)]{BoI}, there exists a central simple $F$-algebra
  $A$ of degree~$8$ and an involution $\sigma_A$ on $A$ restricting to
  a symplectic involution on $B$. 
  Taking $s_K=t_K$, with $t_K(1)=0$, Propositions~\ref{prop:sNrp} and \ref{prop:deltaKF} show that 
  \[\overline{\pi({\sigma_A})} +
    \overline{(t_K)_*(\bpf{K^\times} q_B)} \in \overline
    I^3_q(F)/\overline{(t_K)_*(\bpf{K^\times} q_B)}\] does not depend on the choices of $A$ and $\sigma_A$. It coincides with the class 
 \[ \overline{(t_K)_*(\Nrp_{\sigma_B})}+
    \overline{(t_K)_*(\bpf{K^\times} q_B)}\in \overline I^3_q(F)/\overline{(t_K)_*(\bpf{K^\times} q_B)}\] for any symplectic involution $\sigma_B$ of $B$. 
    Moreover, let $q_A$ be a quadratic form in $I_q^2(F)$ such that $e_2(q_A)=[A]$. We may take $q_B=(q_A)_K$ and by~\cite[(20.3c) \& (21.5)]{EKM}, we have 
    \[\overline{(s_K)_*(\bpf{K^\times} q_B)}=\overline{(s_K)_*(\bpf{K^\times} )q_A}=\overline{(t_K)_*(\bpf{K^\times} )q_A}=\overline{(t_K)_*(\bpf{K^\times} q_B)}\in\overline I^3_q(F).\]
     We therefore define
  \[
    \delta_{K/F}(B) = \overline{\pi({\sigma_A})} +
    \overline{(s_K)_*(\bpf{K^\times} q_B)} \in \overline
    I^3_q(F)/\overline{(s_K)_*(\bpf{K^\times} q_B)}.
  \]
  
    If $\varphi_B$ is an Albert form of $B$, then $\varphi_B\simeq
  \bil\lambda \Nrp_{\sigma_B}$ for some $\lambda\in K^\times$ because
  all the Albert forms of $B$ are similar
  by~\cite[(16.3) and (16.8)]{BoI}. Hence,
  \[
    (t_K)_*(\varphi_B) = (t_K)_*(\Nrp_{\sigma_B}) +
    (t_K)_*(\bpf\lambda \Nrp_{\sigma_B}),
  \]
  so that $\overline{(t_K)_*(\varphi_{B})}$ and
  $\overline{(t_K)_*(\Nrp_{\sigma_B})}$ have the same image in
  $\overline I^3_q(F)/\overline{(t_K)_*(\bpf{K^\times}q_{B})}$. It
  follows that the definition of $\delta_{K/F}(B)$ can also be stated as
  \[
    \delta_{K/F}(B) = \overline{(t_K)_*(\varphi_B)}+
    \overline{(t_K)_*(\bpf{K^\times} q_B)} \qquad\text{for any
      Albert form  $\varphi_B$ of $B$.}
  \]
  Thus, it is clear that the invariant $\delta_{K/F}(B)$ is the same
  as the one defined in cohomological terms by
  Barry--Chapman--Laghribi in~\cite[\S3]{BaCL} for the case where $K$
  is separable over $F$ and by Laghribi--Lorenz in~\cite[\S6]{LL} for
  the case where $K$ is inseparable.
\end{defn}

Vanishing of $\delta_{K/F}(B)$ characterizes biquaternion $K$-algebras
that descend to biquaternion algebras over $F$:

\begin{prop}
  \label{prop:deltaKFdesc}
  With the notation of Definition~\ref{def:deltaKF}, there exists a
  biquaternion $F$-algebra $B_0$ such that $B\simeq B_0\otimes_FK$ if
  and only if $\delta_{K/F}(B)=0$.
\end{prop}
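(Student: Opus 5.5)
The easy direction is "only if". Suppose $B\simeq B_0\otimes_FK$ for a biquaternion $F$-algebra $B_0$. Let $\sigma_0$ be a symplectic involution on $B_0$ and set $\sigma_B=\sigma_0\otimes\mathrm{id}_K$. Then $\Nrp_{\sigma_B}=(\Nrp_{\sigma_0})_K$ is defined over $F$. The transfer of a form extended from $F$ is $(t_K)_*(\psi_K)=t_K(1)\cdot\psi$ in the Witt group, up to the standard projection formula \cite[(20.3c)]{EKM}. Since $t_K(1)=0$, the transfer $(t_K)_*(\Nrp_{\sigma_B})$ is hyperbolic, and Definition~\ref{def:deltaKF} gives $\delta_{K/F}(B)=0$. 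Alternatively, take $A=B_0\otimes_FQ$ with $Q$ any quaternion $F$-algebra containing $K$ (for instance a split one). Extend $\sigma_0$ to $\sigma_0\otimes\gamma$, with $\gamma$ the canonical involution of $Q$. This is a decomposable involution on $A$ restricting to $\sigma_B$ on the centralizer $B$ of $K$, and its discriminant Pfister form is hyperbolic, exactly as in the last paragraph of the proof of Proposition~\ref{prop:sNrp}.

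For the converse, assume $\delta_{K/F}(B)=0$. Then $\overline{(t_K)_*(\Nrp_{\sigma_B})}=\overline{(t_K)_*(\bpf{\lambda}q_B)}$ for some $\lambda\in K^\times$, where the subgroup $\overline{(t_K)_*(\bpf{K^\times}q_B)}$ is understood via the projection formula as $\{\overline{(t_K)_*(\bpf{\lambda})q_A}\}$. The plan is to change $\sigma_B$ so that the discriminant class vanishes exactly in $\overline I^3_q(F)$.
\begin{itemize}
\item If $\lambda$ can be realized as $\Nrp_{\sigma_B}(u)$ with $u\in\Symd(\sigma_B)$, then by \eqref{eq:deltaKF} the involution $\sigma'_B=\Int(u)\circ\sigma_B$ satisfies $\overline{(t_K)_*(\Nrp_{\sigma'_B})}=0$.
\item Nrp of $B$ represents the reduced norms, and after scaling by $K^{\times2}$ and by $F^\times$-factors one can adjust $\lambda$. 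Scalars from $F^\times$ change the class by $\overline{\bpf{F^\times}(t_K)_*(q_B)}$, which is zero since $(t_K)_*(q_B)=t_K(1)q_A=0$.
\end{itemize}
The key point to check is that every class in the subgroup is realized by some $\Nrp_{\sigma_B}(u)$. Since $\Nrp_{\sigma_B}$ is an Albert form, $\bpf{\lambda}\Nrp_{\sigma_B}\equiv \bpf{\mu}\Nrp_{\sigma_B}\bmod I^3_q(K)$ whenever $\lambda\mu^{-1}$ is a similarity factor. So I would reduce to $\lambda$ represented by $\Nrp_{\sigma_B}$ up to similarity factors, which should hold after possibly multiplying by an element represented by the Albert form.

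Now let $\sigma_A$ extend such a $\sigma'_B$. By Proposition~\ref{prop:sNrp}, $\pi(\sigma_A)$ lies in $I^4_q(F)$. Since $\pi(\sigma_A)$ is a $3$-fold Pfister form, the Arason--Pfister Hauptsatz forces it to be hyperbolic. By the main theorem of \cite{BGBT3} (or \cite[Prop.~8]{CRAS}), $A$ then decomposes as $Q_1\otimes Q_2\otimes Q_3$ with factors stable under $\sigma_A$. The final step is to deduce that $B$ descends. Write $\sigma_A=\sigma_1\otimes\sigma_2\otimes\sigma_3$ and find a $\sigma_A$-stable quaternion subalgebra $Q\supset K$ on which $\sigma_A$ is not canonical. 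Then the centralizer $C$ of $Q$ is a biquaternion $F$-algebra with $B\simeq C\otimes_FK$, as in the end of the proof of Proposition~\ref{prop:sNrp}.

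The main obstacle is producing this $Q$ containing $K$. With a decomposition in hand, $K\subset\Symd(\sigma_A)$ need not sit inside a tensor factor. I would instead argue on forms: the vanishing of $(t_K)_*(\Nrp_{\sigma'_B})$ in the Witt group means the $6$-dimensional Albert form $\Nrp_{\sigma'_B}$ is, up to a scalar, in the kernel of the transfer. A Witt-kernel/transfer result (for $K$ separable, forms with trivial transfer are extended from $F$ up to similarity; for $K$ inseparable, the analogous statement with $t_K$) then yields $\Nrp_{\sigma'_B}\simeq\bil{c}(\psi)_K$ for some $6$-dimensional $\psi$ over $F$ of trivial discriminant. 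Its Clifford algebra $B_0$ satisfies $B_0\otimes_FK\simeq B$ by \cite[(16.3)]{BoI}. Establishing that transfer-kernel statement in dimension $6$, uniformly in both the separable and inseparable cases, is the step I expect to require the most work.
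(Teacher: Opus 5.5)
The part of your converse that actually proves the result is the paper's argument. But as written, the converse rests on an unjustified step that is also unnecessary.

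\textbf{The realization step.} You claim that the $\lambda\in K^\times$ furnished by $\delta_{K/F}(B)=0$ can be traded for a value $\Nrp_{\sigma_B}(u)$, and you give no argument for it. Working ``up to similarity factors'' gains nothing, because the set of values of $\Nrp_{\sigma_B}$ is already stable under multiplication by similarity factors. Multiplying $\lambda$ by a represented value $\mu$ shifts the class by $\overline{(t_K)_*(\bpf{\mu}\Nrp_{\sigma_B})}$, which has no reason to vanish.

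\textbf{The whole detour can be dropped.} This covers the change of involution, the extension to $A$, the hyperbolicity of $\pi(\sigma_A)$ and the decomposition of $(A,\sigma_A)$. In characteristic~$2$ one has $\bpf{\lambda}\Nrp_{\sigma_B}=\Nrp_{\sigma_B}\perp\bil{\lambda}\Nrp_{\sigma_B}$. So the relation $\overline{(t_K)_*(\Nrp_{\sigma_B})}=\overline{(t_K)_*(\bpf{\lambda}\Nrp_{\sigma_B})}$ says directly that the $12$-dimensional form $(t_K)_*(\bil{\lambda}\Nrp_{\sigma_B})$ lies in $I^4_q(F)$. It is therefore hyperbolic by the Arason--Pfister Hauptsatz. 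Moreover $\bil{\lambda}\Nrp_{\sigma_B}$ is an Albert form of $B$, just as good as any $\Nrp_{\sigma'_B}$. Running your last paragraph on $\bil{\lambda}\Nrp_{\sigma_B}$ is exactly the paper's proof.

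Your realization claim does hold here, but only a posteriori. The hyperbolic transfer is isotropic and $\ker t_K=F$, so either $\Nrp_{\sigma_B}$ is isotropic, or $\bil{\lambda}\Nrp_{\sigma_B}$ represents some $c\in F^\times$, making $c\lambda$ a value of $\Nrp_{\sigma_B}$. That argument uses the very hyperbolicity that already finishes the proof.

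\textbf{The step you flag as the main obstacle.} This is the statement that a $6$-dimensional form over $K$ with hyperbolic transfer is isometric to $\psi_K$ for some $6$-dimensional $\psi\in I^2_q(F)$. The paper does not prove it either. It imports it from \cite[Th.~3.6]{BaCL} when $K$ is separable and from \cite[Prop.~6.4]{LL} when $K$ is inseparable. Your proposal is complete modulo the same citation.

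\textbf{The ``only if'' direction.} Your main argument is the paper's. The formula ``$(t_K)_*(\psi_K)=t_K(1)\cdot\psi$'' should read $(t_K)_*(\psi_K)=(t_K)_*(\bil{1})\cdot\psi$. This vanishes because $1$ is an isotropic vector of the nondegenerate binary bilinear form $(t_K)_*(\bil{1})$, which is therefore metabolic.

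Your alternative with the canonical involution $\gamma$ of $Q$ fails when $K$ is separable. There $\gamma$ restricts to the nontrivial automorphism of $K$, so $\sigma_0\otimes\gamma$ induces an involution of the second kind on $B$, not a symplectic one. Use instead an orthogonal involution of $Q$ that fixes $K$, as in the proof of Theorem~\ref{thm:biquadesc}.
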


\begin{proof}
  We sketch the proof given in~\cite[Th.~3.6]{BaCL} or~\cite[Prop.~6.4]{LL}
  for the reader's convenience. If $B\simeq B_0\otimes_FK$, then we
  may choose $\varphi_B=(\varphi_{B_0})_K$ with $\varphi_{B_0}$ an
  Albert form of $B_0$, 
  and then $(t_K)_*(\varphi_B)$ is hyperbolic. Conversely, if
  $\varphi_B$ is an Albert 
  form of $B$ and $\overline{(t_K)_*(\varphi_B)} =
  \overline{(t_K)_*(\bpf\lambda\varphi_B)}$ for some $\lambda\in
  K^\times$, then 
  $\overline{(t_K)_*(\bil\lambda\varphi_B)}=0$ and we may find a
  quadratic form $\psi\in I^2_q(F)$ of dimension~$6$ such that
  $\bil\lambda\varphi_B\simeq (\psi)_K$. Then $\psi$ is an Albert
  form of a 
  biquaternion $F$-algebra $B_0$ such that $B\simeq B_0\otimes_FK$.
\end{proof}

\section{Biquadratic descent of quaternion algebras}
\label{sec:biquadesc}

Throughout this section, $L$ is a biquadratic field extension of the
field $F$ of characteristic~$2$ and $Q$ is a quaternion algebra over
$L$. We assume that $L$ is not purely
inseparable\footnote{See the Appendix for the case where $L$
      is purely inseparable over $F$.} over $F$ and $Q$ 
satisfies the norm conditions in 
Theorem~\ref{thm:biquaddescBr}, so its Brauer class descends to
$F$. More precisely, there exists a central simple $F$-algebra $A$, of degree $8$ and exponent $2$, containing $L$, and such that $Q$ is isomorphic to the centralizer of $L$ in $A$. Our goal is to define an invariant $\delta_{L/F}(Q)$ whose vanishing characterizes quaternion algebras of the form $Q_0\otimes_FL$ for some quaternion $F$-algebra $Q_0$. As in the definitions of the $\Delta$-invariant in Section~\ref{sec:Delta} and of the $\delta_{K/F}$-invariant in Section~\ref{sec:Quadbiquat}, we use discriminant Pfister forms of symplectic involutions on algebras of degree~$8$.
This invariant is closely related to the one defined by Laghribi--Lorenz in~\cite{LL}, as we also show. Its values lie in
the quotient $\overline
I^3_q(F)/\overline{(s_L)_*(\bpf{L^\times}\Nrd_Q)}$, where $\Nrd_Q$ is the
reduced norm quadratic form and $s_L\colon L\to F$ is any nonzero
$F$-linear functional. 
Using a quadratic form $q_A\in I^2_q(F)$ with $e_2(q_A)=[A]$, the same kind of computations as in Definition~\ref{def:deltaKF} and ~\cite[Ex.~21.6]{EKM}
show that this quotient does not depend on the choice of $s_L$.

The definition of $\delta_{L/F}(Q)$ follows the same lines as for the
invariant of biquaternion algebras in Section~\ref{sec:Quadbiquat}. By~\cite[(4.14)]{BoI}, the canonical (symplectic)
involution $\gamma$ on $Q$ extends to an involution $\sigma_A$ on $A$,
which is necessarily symplectic. We show in
Proposition~\ref{prop:defdeltaLF} below that the
discriminant Pfister form $\pi(\sigma_A)$ induces a class in $\overline
I^3_q(F)/\overline{(s_L)_*(\bpf{L^\times}\Nrd_Q)}$ which does not depend on the choices of the algebra $A$ and the involution $\sigma_A$. 
We define this class to be the invariant $\delta_{L/F}(Q)$, see Definition~\ref{def:deltaLFQ} below.

\medbreak

We first consider a change of algebra $A$ containing $Q$:

\begin{lemma}
  \label{lem:chgA}
  Let $A$ and $A'$ be central simple $F$-algebras of degree~$8$
  containing $Q$ such that $2[A]=2[A']=0$, and let $\sigma_A$ be an
  involution on $A$ extending the canonical involution $\gamma$ on
  $Q$. There exists an involution $\sigma_{A'}$ on $A'$ extending
  $\gamma$ on $Q$ such that $\pi(\sigma_{A'})\simeq
  \pi(\sigma_A)$. 
\end{lemma}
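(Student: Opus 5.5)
The plan is to reduce to Proposition~\ref{prop:sNrp} by passing through a quadratic subextension $K\subset L$. Since $L$ is not purely inseparable, I first fix a quadratic extension $K$ of $F$ in $L$ that is separable over $F$ when possible. In fact any $K$ will do, because Proposition~\ref{prop:sNrp} covers both the separable and the inseparable case.

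Let $B$ and $B'$ denote the centralizers of $K$ in $A$ and $A'$. These are biquaternion $K$-algebras containing $L$, and the centralizer of $L$ in each is $Q$. More precisely, $A$ and $A'$ contain $Q$, so they contain its center $L$; the centralizer of $L$ in $A$ has degree $2$ over $L$ and contains $Q$, hence equals $Q$. By \cite[(4.14)]{BoI} applied inside $B$, we have $B\simeq Q\otimes_L C$ in the sense of double centralizers. The key point is this: $[B]=[A]_K$ and $[B']=[A']_K$ both restrict to $[Q]$ over $L$, but $B$ and $B'$ need not be isomorphic.

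I would therefore work more directly with the involutions. The restriction $\sigma_B$ of $\sigma_A$ to $B$ is an involution (since $\sigma_A$ fixes $K\subset L\subset Q$ pointwise, where $\gamma$ is $L$-linear). It is symplectic because $1\in\Symd(\gamma)\subset\Symd(\sigma_B)$. Proposition~\ref{prop:sNrp} then gives $\pi(\sigma_A)$ as a scalar multiple of $(t_K)_*(\Nrp_{\sigma_B})$. The goal becomes to produce $\sigma_{B'}$ on $B'$ extending $\gamma$ with $(t_K)_*(\Nrp_{\sigma_{B'}})$ similar to $(t_K)_*(\Nrp_{\sigma_B})$, and then to extend $\sigma_{B'}$ to $A'$ via \cite[(4.14)]{BoI}.

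To compute $\Nrp_{\sigma_B}$, I use that $L$ is a quadratic extension of $K$ in $\Sym(\sigma_B)$ with centralizer $Q$, on which $\sigma_B$ restricts to $\gamma$. I then apply \cite[Prop.~5.6, 6.5]{BGBT1} as in the proof of Proposition~\ref{prop:sNrp}, but with $Q$ in place of $L$. This should give a decomposition $\Symd(\sigma_B)=\Symd(\gamma)\oplus U$, where $\Symd(\gamma)=L$. Here $\Nrp_{\sigma_B}$ restricted to $L$ is $N_{L/K}$, and $U$ consists of elements anticommuting with $L$ in the twisted sense, carrying a quadratic form expressible through $\Nrd_Q$. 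The expected formula is
\[
\Nrp_{\sigma_B}\simeq N_{L/K}\perp (s)_*\bigl(\bil{\mu}\Nrd_Q\bigr),
\]
for a suitable functional $s\colon L\to K$ and some $\mu\in L^\times$ determined by an element $y\in B$ with $y\ell=\iota(\ell)y$.

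Similarly, for $B'$ with a chosen $\sigma_{B'}$ extending $\gamma$, the same formula holds with some $\mu'$. Changing $\sigma_{B'}$ to $\Int(u)\circ\sigma_{B'}$ with $u\in L^\times$ keeps it an extension of $\gamma$ and rescales $\mu'$ by a norm-type factor. The main obstacle is showing that $\mu$ and $\mu'$ can be matched up to such factors.

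For that step, I would exploit that $(s)_*(\bil{\mu}\Nrd_Q)$ determines $[B]$, which determines $[A]_K$. I would try first to show that the admissible $\mu$ range over a coset that is independent of $A$, using the conditions $\Nrd$/$N_{L/K}[Q]=0$ from Theorem~\ref{thm:biquaddescBr}. Then I would transfer to $F$, where $N_{L/K}$ is defined over $F$ and therefore has hyperbolic transfer, so that $(t_K)_*\circ(s)_*=(s_L)_*$ yields $\pi(\sigma_{A'})\simeq\pi(\sigma_A)$.
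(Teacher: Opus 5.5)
There is a genuine gap. The step you call the main obstacle is the entire content of the lemma, and your proposal supplies no mechanism for it.

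With one fixed quadratic extension $K\subset L$, the centralizers $B$ and $B'$ of $K$ in $A$ and $A'$ differ in $\Br(K)$ by $([A']-[A])_K$, which is nonzero in general. Their Albert forms, e.g.\ $\Nrp_{\sigma_B}$ and $\Nrp_{\sigma_{B'}}$, are therefore not similar over $K$. A similarity can only appear after transfer to $F$, and nothing in your plan explains why it should. Twisting $\sigma_{B'}$ by $u\in L^\times$ only multiplies $\Nrp_{\sigma_{B'}}$ by $N_{L/K}(u)$, which does not touch this discrepancy. The norm conditions of Theorem~\ref{thm:biquaddescBr} do not by themselves produce the `coset independent of $A$' you hope for.

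Three further points are wrong as stated:
\begin{itemize}
\item Your intermediate formula fails for dimension reasons. $\Nrp_{\sigma_B}$ is $6$-dimensional over $K$, so the complement of $L$ in $\Symd(\sigma_B)$ is $4$-dimensional, whereas $(s)_*(\bil\mu\Nrd_Q)$ alone is $8$-dimensional.
\item \cite[Prop.~5.6, 6.5]{BGBT1} concerns \'etale (neat) subalgebras, so it is unavailable when $L/K$ is inseparable. That is exactly what your preferred choice of $K$ separable over $F$ produces when $L$ is not separable over $F$, so `any $K$ will do' is not justified.
\item The claim $B\simeq Q\otimes_L C$ makes no sense, since $L$ is not central in $B$.
\end{itemize}

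What is missing is the idea of using \emph{both} quadratic subextensions, so that you never have to compare different centralizers. Write $L\simeq K_1\otimes_FK_2$.
\begin{itemize}
\item Since $[A']-[A]$ is split by $L$, it is the class of a biquaternion $F$-algebra whose Albert form becomes hyperbolic over $L$. By \cite{Ahmad} that form is Witt-equivalent to $\varphi_1\perp\varphi_2$ with $\varphi_i$ split by $K_i$. Hence $[A']=[A]+[Q_1]+[Q_2]$ with $Q_i$ a quaternion $F$-algebra split by $K_i$.
\item Take a degree-$8$ algebra $A_1$ with $[A_1]=[A]+[Q_1]$. It contains the centralizer $B_1$ of $K_1$ in $A$ as its own $K_1$-centralizer. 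Extending $\sigma_A|_{B_1}$ to $A_1$ and applying Proposition~\ref{prop:sNrp} over $K_1$ gives $\pi(\sigma_{A_1})\simeq\pi(\sigma_A)$.
\item Likewise $A'$ contains the centralizer of $K_2$ in $A_1$. Proposition~\ref{prop:sNrp} over $K_2$ then yields $\sigma_{A'}$ with $\pi(\sigma_{A'})\simeq\pi(\sigma_{A_1})$.
\end{itemize}
All these involutions restrict to $\gamma$ on $Q$.

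A single-$K$ argument can be rescued, but only with input absent from your plan:
\begin{itemize}
\item Choose $K$ with $L=K(\wp^{-1}(\alpha))$ and $\alpha\in F$, which is possible since $L$ is not purely inseparable.
\item Write $B=Q_0\otimes_K[\alpha,\beta)_K$ and $B'=Q_0\otimes_K[\alpha,\beta')_K$ with $Q_0\subset Q$.
\item Use $N_{K/F}[B]=N_{K/F}[B']=0$ and Proposition~\ref{prop:descquat}(b) to get $\beta'\in\beta c\,N_{L/K}(L^\times)$ with $c\in F^\times$.
\item The explicit Pfaffian norms from the proof of Proposition~\ref{prop:exdeltaLF}, twisted on both sides by the same $u\in L^\times$, then show that the two transfers differ by the factor $\bil c$.
\end{itemize}
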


\begin{proof}
  Let $K_1$ and $K_2$ be quadratic extensions of $F$ such that
  $L\simeq K_1\otimes_FK_2$. The Brauer class $[A']-[A]\in \Br(F)$ is
  split by $L$ since $[A]_L=[Q]=[A']_L$. Therefore, $[A']-[A]$ is the
  Brauer class of a biquaternion $F$-algebra $B$. Let $q_B$ be an
  Albert form of $B$. Since $B$ is split by $L$, the form $(q_B)_L$ is
  hyperbolic. Therefore, by~\cite[Th.~2.1]{Ahmad} if one of $K_1$,
  $K_2$ is inseparable over $F$, or by~\cite[Th.~2.2]{Ahmad} if $K_1$
  and $K_2$ are separable over $F$, the form $q_B$ is Witt-equivalent
  to an orthogonal sum $\varphi_1\perp\varphi_2$, where $\varphi_i$ is
  split by $K_i$ for $i=1$, $2$. Comparing Arf invariants shows that
  $\varphi_1$, $\varphi_2\in I^2_q(F)$. Letting $e_2(\varphi_i)=[Q_i]$
  we have
  \[
    [A'] = [A] + [Q_1] + [Q_2] \qquad\text{with $Q_i$ a quaternion
      algebra split by $K_i$ for $i=1$, $2$.}
  \]

  Let $B_1\subset A$ be the centralizer of $K_1$; thus $B_1\supset Q$
  and, since $Q_1$ is split by $K_1$,
  \[
    [B_1]=[A]_{K_1} = [A\otimes_FQ_1]_{K_1}.
  \]
  Every field extension of $K_1$ that splits $A$ also splits
  $A\otimes_FQ_1$, hence $A\otimes_FQ_1$ is Brauer-equivalent to a
  central simple 
  $F$-algebra $A_1$ of degree~$8$, and we
  may assume $A_1$ contains $B_1$ since $[A_1]_{K_1}=[B_1]$. Let
  $\sigma_{B_1}$ be the restriction of $\sigma_A$ to
  $B_1$. By~\cite[(4.14)]{BoI} there exists an involution
  $\sigma_{A_1}$ on $A_1$ extending $\sigma_{B_1}$. Let $s_1\colon
  K_1\to F$ be a nonzero $F$-linear functional such that
  $s_1(1)=0$. Proposition~\ref{prop:sNrp} shows that $\pi(\sigma_A)$
  and $\pi(\sigma_{A_1})$ are both Witt-equivalent to a scalar multiple of
  $(s_1)_*(\Nrp_{\sigma_{B_1}})$, hence
  $\pi(\sigma_A)\simeq\pi(\sigma_{A_1})$.

  Now, $[A']=[A_1]+[Q_2]$, hence $[A']_{K_2} = [A_1]_{K_2}$ since
  $K_2$ splits $Q_2$. Therefore, up to isomorphism we may assume that
  the centralizer $B_2$ of $K_2$ in $A_1$ is contained in $A'$. Let
  $\sigma_{B_2}$ be the restriction of $\sigma_{A_1}$ to
  $B_2$. By~\cite[(4.14)]{BoI} again, there exists an involution
  $\sigma_{A'}$ on $A'$ extending $\sigma_{B_2}$, and the same
  argument as for $\sigma_{A_1}$ above using
  Proposition~\ref{prop:sNrp} shows that $\pi(\sigma_{A_1})\simeq
  \pi(\sigma_{A'})$, hence also $\pi(\sigma_{A'})\simeq
  \pi(\sigma_A)$. 
  Note that the involutions $\sigma_A$, $\sigma_{B_1}$, $\sigma_{A_1}$, $\sigma_{B_2}$ and $\sigma_{A'}$ all coincide on $Q$, and therefore extend its canonical involution $\gamma$.  
  Thus, $\sigma_{A'}$ meets the requirements of the lemma.
\end{proof}

We also require the following technical result:

\begin{lemma}
  \label{lem:tech}
  Let $A$ be a central simple $F$-algebra of degree~$8$ containing $Q$
  and $\sigma_A$ be an involution on $A$ extending $\gamma$ on
  $Q$. Let also $q_A\in I^2_q(F)$ be such that $e_2(q_A)=[A]$, and let
  $s_L\colon L\to F$ be a nonzero $F$-linear functional. For every
  $\ell\in L^\times$,
  \begin{equation}
    \label{eq:tech}
    N_{L/F}(\ell) = \Nrp_{\sigma_A}(\ell) \quad\text{and}\quad
    \overline{(s_L)_*(\bpf\ell \Nrd_Q)} = \overline{\bpf{N_{L/F}(\ell)}
      q_A} \quad\text{in $\overline I^3_q(F)$.}
  \end{equation}
\end{lemma}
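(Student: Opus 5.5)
For the first equality, I would work inside $A$. Note that $L\subset\Symd(\gamma)\subset\Symd(\sigma_A)$, since $L$ is the center of $Q$ and consists of symmetrized elements for the canonical involution ($\ell=x+\gamma(x)$ with $x=\ell e$ for $e\in Q$ of reduced trace $1$). So $\Nrp_{\sigma_A}(\ell)$ is defined, and it is the unique square root of $\Nrd_A(\ell)$. The centralizer of $L$ in $A$ is $Q$ of degree $2$ over $L$, so $A$ is a free $L$-module of rank $16$. It follows that $\Nrd_A(\ell)=N_{L/F}(\ell)^2$: the reduced characteristic polynomial of $\ell$ is the square of its minimal-type characteristic polynomial over $F$ of degree $4$. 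Equivalently, after scalar extension to a splitting field, $\ell$ acts on an $8$-dimensional space that is free of rank $2$ over $L$. Taking square roots gives $\Nrp_{\sigma_A}(\ell)=N_{L/F}(\ell)$.

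For the second equality, I would reduce to Theorem~\ref{thm:main} together with a transfer computation, rather than compute transfers directly.

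First, I would pass through a quadratic subextension $K\subset L$ and the centralizer $B$ of $K$ in $A$. This $B$ is a biquaternion $K$-algebra containing $Q$, and $\sigma_A$ restricts to a symplectic involution on $B$, since $1\in\Symd(\gamma)$. Next, I would use $\sigma'_A=\Int(\ell)\circ\sigma_A$. This is again an involution extending $\gamma$ on $Q$, because $\ell$ is central in $Q$ and $\gamma(\ell)=\ell$. It is symplectic, since $\ell\in\Symd(\sigma_A)$.

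Theorem~\ref{thm:main} gives
\[\overline{\pi(\sigma'_A)}-\overline{\pi(\sigma_A)}=\overline{\bpf{N_{L/F}(\ell)}q_A}.\]
On the other side, Proposition~\ref{prop:sNrp} expresses both $\pi(\sigma_A)$ and $\pi(\sigma'_A)$ as transfers $(t_K)_*(\Nrp_{\sigma_B})$ and $(t_K)_*(\Nrp_{\sigma'_B})$. By \cite[(16.18)]{BoI}, the difference $\Nrp_{\sigma_B}-\Nrp_{\sigma'_B}$ equals $\bpf{\Nrp_{\sigma_B}(\ell)}\Nrp_{\sigma_B}$, with $\Nrp_{\sigma_B}(\ell)=N_{L/K}(\ell)$ by the same argument as in the first paragraph. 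So the difference of the $\pi$'s is $\overline{(t_K)_*(\bpf{N_{L/K}(\ell)}\Nrp_{\sigma_B})}$.

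It then remains to identify $\overline{(t_K)_*(\bpf{N_{L/K}(\ell)}\Nrp_{\sigma_B})}$ with $\overline{(s_L)_*(\bpf\ell\Nrd_Q)}$. This step is cleaner if done directly with a decomposition. Choose $K$ so that $L=K\otimes_F K'$ with $K'$ separable. Then $\Symd(\sigma_B)$ contains $\Symd(\gamma)=L\oplus Q^0$-type pieces, and the Albert form restricted suitably gives, up to a hyperbolic summand defined over $F$, the form $\Nrp_{\sigma_B}\equiv (u)_*(\Nrd_Q)$ for a linear functional $u\colon L\to K$ with $u(1)=0$. This holds as in the decomposition $(\Symd,\Nrp)=(L,N_{L/K})\perp(W,\sq_W)$ used in Proposition~\ref{prop:sNrp}, with $W$ identified with the pure quaternions of $Q$ twisted. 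Frobenius reciprocity $\bpf{N_{L/K}(\ell)}\cdot u_*(\psi)\equiv u_*(\bpf\ell\psi)$ modulo $I^3_q$, via \cite[(20.3c), (21.5)]{EKM}, together with transitivity of transfers, $(t_K)_*\circ u_*=(s_L)_*$ up to the choice of functional (harmless modulo $I^3_q$ as in Definition~\ref{def:deltaKF}), then yields the claim.

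An alternative route for this step is to show that both sides are ``linear in $\ell$'' modulo $I^3_q$ and agree on generators: for $\ell\in F^\times$ the left side is $\bpf\ell(s_L)_*(\Nrd_Q)$ and the right side is trivial. I expect this identification, getting the transfer of $\bpf\ell\Nrd_Q$ compatible with $q_A$, to be the main obstacle. I would first try the cleanest version: pick $K$ separable in $L$, write $L=K(\wp^{-1}(a))$, so that $Q\subset B=Q_K'\otimes[a,\cdot)$ as in the proof of Theorem~\ref{thm:biquaddescBr}, and compute explicitly.
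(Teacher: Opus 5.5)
\textbf{Verdict: there is a genuine gap.} Your proof of $N_{L/F}(\ell)=\Nrp_{\sigma_A}(\ell)$ is fine and matches the paper's. The second equality, however, is not established.

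\textbf{Where the argument for the second equality fails.} The detour through $\sigma'_A=\Int(\ell)\circ\sigma_A$, Theorem~\ref{thm:main} and Proposition~\ref{prop:sNrp} is correct, but it only yields $\overline{\bpf{N_{L/F}(\ell)}q_A}=\overline{(t_K)_*(\bpf{N_{L/K}(\ell)}\Nrp_{\sigma_B})}$. That identity never involves $\Nrd_Q$ or $s_L$. Everything the lemma actually asserts is left to the step you flag as the main obstacle, and your sketch of that step does not work.
\begin{itemize}
\item $\Symd(\gamma)$ is just $L$. The space $W$ in $(\Symd(\sigma_B),\Nrp_{\sigma_B})=(L,N_{L/K})\perp(W,\sq_W)$ meets $Q$ only in $0$, because its nonzero elements do not centralize $L$. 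So $W$ cannot be matched with pure quaternions of $Q$.
\item The relation $\Nrp_{\sigma_B}\equiv u_*(\Nrd_Q)$ (even up to forms defined over $F$) is false. If $Q\simeq Q_0\otimes_KL$, then $u_*(\Nrd_Q)=u_*(\bil1)\cdot\Nrd_{Q_0}$ is hyperbolic, since $u(1)=0$ makes $u_*(\bil1)$ metabolic. Your relation would then force $(t_K)_*(\Nrp_{\sigma_B})$, hence $\delta_{K/F}(B)$, to vanish. This contradicts the algebra $B$ of \S\ref{subsec:deltaKFnontriv}: there the centralizer $Q_{L_2}$ of $L_2$ descends to $K_2$, yet $\delta_{K_2/F_2}(B)\neq0$ by Theorem~\ref{thm:deltaKFnontriv}.
\item For the same reason, $\bpf{N_{L/K}(\ell)}\cdot u_*(\psi)\equiv u_*(\bpf\ell\psi)$ is not a valid form of Frobenius reciprocity. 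For $\psi=(\psi_0)_L$ the left side is $0$, while the right side is $u_*(\bpf\ell)\psi_0\equiv\bpf{N_{L/K}(\ell)}\psi_0$, which is nonzero in general.
\item The ``check on $F^\times$'' route fails as well, since $F^\times$ does not generate $L^\times$.
\end{itemize}

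\textbf{The missing idea.} The second equality has nothing to do with involutions.
\begin{enumerate}
\item Since $e_2(\Nrd_Q)=[Q]=[A]_L=e_2\bigl((q_A)_L\bigr)$, Sah's theorem gives $\Nrd_Q\equiv(q_A)_L\bmod I^3_q(L)$. Hence $\bpf\ell\Nrd_Q\equiv\bpf\ell\,(q_A)_L\bmod I^4_q(L)$.
\item Transfer maps $I^4_q(L)$ into $I^4_q(F)$, and Frobenius reciprocity gives $(s_L)_*\bigl(\bpf\ell\,(q_A)_L\bigr)=(s_L)_*(\bpf\ell)\cdot q_A$.
\item Finally, $(s_L)_*(\bpf\ell)\equiv\bpf{N_{L/F}(\ell)}\bmod I^2(F)$. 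The Gram matrix of $(s_L)_*(\bil\ell)$ differs from that of $(s_L)_*(\bil1)$ by the matrix of multiplication by $\ell$, so $\det(s_L)_*(\bpf\ell)\equiv N_{L/F}(\ell)$ modulo squares. The paper instead quotes \cite[Cor.~20.13 and 21.5]{EKM} when $L=F(\ell)$, and handles $F(\ell)\subsetneq L$ by choosing $s_L$ to vanish there.
\end{enumerate}
Multiplying by $q_A\in I^2_q(F)$ gives the claim. The same argument applied over $K$ gives your intermediate identity directly, so the passage through Theorem~\ref{thm:main} was never needed.
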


\begin{proof}
  First, observe that $L=\Symd(\gamma)\subset\Symd(\sigma_A)$, hence
  $\Nrp_{\sigma_A}(\ell)$ is defined for all $\ell\in L$.
  Since $L$ is a subfield of degree~$4$ in $A$ of degree~$8$, we have
  $\Nrd_A(\ell)=N_{L/F}(\ell)^2$, hence $\Nrp_{\sigma_A}(\ell) =
  N_{L/F}(\ell)$ since $\Nrd_A(\ell)=\Nrp_{\sigma_A}(\ell)^2$.

  Next, note that $e_2(\Nrd_Q) = [Q]=[A_L]$, hence
  $\overline{\Nrd_Q} = \overline{(q_A)_L}$ in $\overline I^2_q(L)$. By
  Frobenius reciprocity (see~\cite[Prop.~20.2]{EKM}), it follows that
  $\overline{(s_L)_*(\bpf \ell \Nrd_Q)} = \overline{(s_L)_*(\bpf
    \ell)q_A}$. 
  If $L=F(\ell)$, then 
  \cite[Cor.~20.13 and 21.5]{EKM} show that $\overline{(s_L)_*(\bpf
    \ell)} = \overline{\bpf{N_{L/F}(\ell)}}$ in $\overline I(F)$,
  and~\eqref{eq:tech} is proven. If $F(\ell)\subsetneq L$, then
  $N_{L/F}(\ell)\in F^{\times2}$, hence $\bpf{N_{L/F}(\ell)} =
  0$. On the other hand, taking $s_L=s_1\circ s_2$ where $s_2\colon L\to
  F(\ell)$ is a nonzero $F(\ell)$-linear functional that vanishes on
  $F(\ell)$ and $s_1\colon F(\ell)\to F$ is a nonzero $F$-linear
  functional, we 
  obtain $(s_L)_*(\bpf \ell)=0$. Therefore, \eqref{eq:tech} holds in all
  cases. 
\end{proof}

\begin{prop}
  \label{prop:defdeltaLF}
  Let $A$ and $A'$ be central simple $F$-algebras of degree~$8$
  containing $Q$ and $\sigma_A$, $\sigma_{A'}$ be involutions on $A$
  and $A'$ respectively extending the canonical involution $\gamma$ on
  $Q$. Then, for any nonzero $F$-linear functional $s_L\colon L\to F$,
  \[
    \overline{\pi(\sigma_A)} \equiv \overline{\pi(\sigma_{A'})}
    \bmod \overline{(s_L)_*(\bpf{L^\times}\Nrd_Q)} \quad\text{in
      $\overline I^3_q(F)$}.
  \]
\end{prop}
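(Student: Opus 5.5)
The plan is to reduce to the case $A'=A$ with Lemma~\ref{lem:chgA}, and then compare two involutions on the same algebra with Theorem~\ref{thm:main} and Lemma~\ref{lem:tech}.

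\emph{Reduction to one algebra.} Lemma~\ref{lem:chgA} applied to $A$, $A'$ and $\sigma_A$ yields an involution $\sigma''$ on $A'$ extending $\gamma$ with $\pi(\sigma'')\simeq\pi(\sigma_A)$. It therefore suffices to prove the congruence for two involutions $\sigma=\sigma''$ and $\sigma'=\sigma_{A'}$ on the same algebra $A'$, both extending $\gamma$. To lighten notation, rename $A'$ as $A$ and fix $q_A$ with $e_2(q_A)=[A]$.

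\emph{Comparing the two involutions.} Both $\sigma$ and $\sigma'$ are symplectic, so $\sigma'=\Int(w)\circ\sigma$ for some unit $w\in\Symd(\sigma)$, by \cite[(2.7)]{BoI}. They agree on $Q$, so $\Int(w)$ is the identity on $Q$, and hence $w$ lies in the centralizer of $Q$ in $A$. This centralizer contains $L$, which is the center of $Q$, so it lies in the centralizer of $L$, which is $Q$ itself. Hence $w\in Q\cap C_A(Q)=L$, so $w=\ell\in L^\times$. Theorem~\ref{thm:main} then gives
\[
  \overline{\pi(\sigma')}=\overline{\pi(\sigma)}+\overline{\bpf{\Nrp_{\sigma}(\ell)}q_A}.
\]
Lemma~\ref{lem:tech} gives $\Nrp_\sigma(\ell)=N_{L/F}(\ell)$ and
\[
  \overline{\bpf{N_{L/F}(\ell)}q_A}=\overline{(s_L)_*(\bpf\ell\Nrd_Q)},
\]
and the latter lies in $\overline{(s_L)_*(\bpf{L^\times}\Nrd_Q)}$. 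This is exactly the required congruence.

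The main obstacle is the step $w\in L$. The key point is that $w$ is determined only up to a scalar in $F^\times$, and we need $\Int(w)|_Q=\mathrm{id}$. This holds because $\sigma$ and $\sigma'$ coincide on $Q$, and $\sigma(Q)=Q$, so
\[
  \Int(w)(x)=\sigma'(\sigma^{-1}(x))=x \qquad\text{for all } x\in Q.
\]
The double-centralizer argument then applies because $L$ is the center of $Q$ and $C_A(L)=Q$.

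The other point to check is that the equality from Lemma~\ref{lem:tech} uses the same $q_A$ as Theorem~\ref{thm:main}. This is fine, since both results allow any $q_A$ with $e_2(q_A)=[A]$.
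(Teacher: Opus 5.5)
Your proposal is correct and follows essentially the same route as the paper: Lemma~\ref{lem:chgA} reduces to two involutions on $A'$ extending $\gamma$, and the double-centralizer argument puts the connecting element in $L^\times$. Theorem~\ref{thm:main} and Lemma~\ref{lem:tech} then finish; the paper writes the twist in the opposite direction ($\sigma'_{A'}=\Int(\ell)\circ\sigma_{A'}$), which makes no difference.
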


\begin{proof}
  Lemma~\ref{lem:chgA} yields an involution $\sigma'_{A'}$ on $A'$
  extending $\gamma$ such that $\pi(\sigma'_{A'})\simeq
  \pi(\sigma_A)$. Let $\sigma'_{A'}=\Int(\ell)\circ\sigma_{A'}$ for
  some $\ell\in\Symd(\sigma_{A'})$. Since the restrictions of
  $\sigma'_{A'}$ and $\sigma_{A'}$ to $Q$ are the same, $\ell$
  centralizes $Q$ hence $\ell\in L^\times$. Theorem~\ref{thm:main}
  yields 
  \[
    \overline{\pi(\sigma'_{A'})} =
    \overline{\bpf{\Nrp_{\sigma_{A'}}(\ell)} q_{A'}} +
    \overline{\pi(\sigma_{A'})} \qquad\text{in $\overline 
      I^3_q(F)$},
  \]
  where $q_{A'}\in I^2_q(F)$ is any quadratic form such that
  $e_2(q_{A'})=[A']$
  and Lemma~\ref{lem:tech} shows that
  \[
    \overline{\bpf{\Nrp_{\sigma_{A'}}(\ell)} q_{A'}} =
    \overline{(s_L)_*(\bpf\ell \Nrd_Q)} \quad\text{in
      $\overline I^3_q(F)$}.
  \]
  Therefore, $\overline{\pi(\sigma_{A})} =
  \overline{\pi(\sigma'_{A'})} \equiv \overline{\pi(\sigma_{A'})}
  \bmod \overline{(s_L)_*(\bpf{L^\times}\Nrd_Q)}$.
\end{proof}

Proposition~\ref{prop:defdeltaLF} allows us to set the following
definition:

\begin{defn}
  \label{def:deltaLFQ}
  Let $Q$ be a quaternion algebra over a biquadratic field extension
  $L$ of $F$. Assume $L$ is not purely inseparable over $F$ and
  $N_{L/K}[Q]=0$ for all quadratic extensions $K$ of $F$ in $L$, so
  that there exist central simple $F$-algebras $A$ of degree~$8$
  containing $Q$ such that $2[A]=0$ by
  Theorem~\ref{thm:biquaddescBr}. An invariant of $Q$ is defined
  by
  \[
    \delta_{L/F}(Q) = \overline{\pi(\sigma_A)} +
    \overline{(s_L)_*(\bpf{L^\times}\Nrd_Q)}
    \in
    \overline I^3_q(F)/\overline{(s_L)_*(\bpf{L^\times}\Nrd_Q)}
  \]
  for any involution $\sigma_A$ on $A$ extending the canonical
  involution $\gamma$ on $Q$ and any nonzero $F$-linear functional
  $s_L\colon L\to F$.
\end{defn}

Vanishing of the invariant $\delta_{L/F}(Q)$ characterizes quaternion
$L$-algebras that descend to $F$:

\begin{thm}
  \label{thm:biquadesc}
  For a quaternion algebra $Q$ as in Definition~\ref{def:deltaLFQ},
  $\delta_{L/F}(Q)=0$ if and only if there exists a quaternion
  $F$-algebra $Q_0$ such that $Q\simeq Q_0\otimes_FL$.
\end{thm}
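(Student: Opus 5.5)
The plan is to prove the two implications separately. The ``if'' direction is a direct construction. The ``only if'' direction first uses Theorem~\ref{thm:main} to replace $\sigma_A$ by an involution with hyperbolic discriminant form, and then extracts $Q_0$ from a decomposition of $A$ adapted to $L$.

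\emph{Descent implies $\delta_{L/F}(Q)=0$.} Suppose $Q\simeq Q_0\otimes_FL$ and write $L=K_1\otimes_FK_2$. Embed $K_i$ in $H_i=M_2(F)$ for $i=1$, $2$, and set $A=Q_0\otimes_FH_1\otimes_FH_2$. Then $A$ has degree~$8$, satisfies $2[A]=0$, contains $L$, and the centralizer of $L$ in $A$ is $Q_0\otimes_FL\simeq Q$.
\begin{itemize}
\item Choose $k_i\in K_i\setminus F$ and let $\tau_i=\Int(k_i)\circ\overline{\phantom{x}}$ on $H_i$, where the bar is the canonical involution. This is an (orthogonal) involution fixing $K_i$ pointwise.
\item Let $\sigma_A=\gamma_0\otimes\tau_1\otimes\tau_2$, with $\gamma_0$ the canonical involution of $Q_0$. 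It is symplectic because $1\in\Symd(\gamma_0)$.
\item $\sigma_A$ restricts to $\gamma$ on $Q_0\otimes_FL$, since it acts as $\gamma_0$ on $Q_0$ and trivially on $L$.
\item $A$ is the tensor product of three $\sigma_A$-stable quaternion algebras, so $\pi(\sigma_A)$ is hyperbolic by \cite[Th.~7.11]{BGBT3} or \cite[Prop.~8]{CRAS}, exactly as in Proposition~\ref{prop:decomp}.
\end{itemize}
Hence $\delta_{L/F}(Q)=0$.

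\emph{$\delta_{L/F}(Q)=0$ implies descent.} Fix $A\supset Q$ and $\sigma_A$ extending $\gamma$.
\begin{itemize}
\item The hypothesis gives $\ell\in L^\times$ with $\overline{\pi(\sigma_A)}=\overline{(s_L)_*(\bpf\ell\Nrd_Q)}$. By Lemma~\ref{lem:tech} this equals $\overline{\bpf{\Nrp_{\sigma_A}(\ell)}q_A}$.
\item Set $\sigma'=\Int(\ell)\circ\sigma_A$. This is legitimate: $\ell\in L=\Symd(\gamma)\subset\Symd(\sigma_A)$, and $\ell$ is central in $Q$, so $\sigma'$ still extends $\gamma$.
\item Theorem~\ref{thm:main}, together with the fact that $2\overline{\bpf{\cdot}q_A}=0$, gives $\overline{\pi(\sigma')}=0$. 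So $\pi(\sigma')$ is a $3$-fold Pfister form lying in $I^4_q(F)$.
\item By the Arason--Pfister Hauptsatz, $\pi(\sigma')$ is hyperbolic.
\end{itemize}
Consequently $A$ decomposes as a tensor product of three $\sigma'$-stable quaternion algebras.

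The main obstacle is to make this decomposition compatible with $L$. I would reopen the proof in \cite{BGBT3} (or \cite{CRAS}) that hyperbolicity yields a decomposition. That proof starts from an arbitrary biquadratic \'etale $L\subset\Symd(\sigma')$ together with the quadratic spaces $(W_i,\sq_{W_i})$. Our $L$ qualifies, since $L\subset\Symd(\gamma)\subset\Symd(\sigma')$. Isotropic vectors in the hyperbolic transfers should produce $\sigma'$-stable quaternion subalgebras $H_1\supset K_1$ and $H_2\supset K_2$ that commute with each other. Let $Q_0$ be the centralizer of $H_1\otimes H_2$. Then $Q_0$ is a quaternion $F$-algebra commuting with $L$, so $Q_0\otimes_FL\subset Q$, and equality holds by comparing dimensions.

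If that adapted-decomposition statement is not directly available, I would argue one quadratic step at a time instead.
\begin{itemize}
\item For $K=K_1$ and $B$ the centralizer of $K$, Proposition~\ref{prop:sNrp} shows that $(t_K)_*(\Nrp_{\sigma'_B})$ is hyperbolic. The argument of Proposition~\ref{prop:deltaKFdesc} then gives $B\simeq B_0\otimes_FK$.
\item I would then try to arrange that $L$ lies in a $\sigma'$-stable quaternion factor, so that $Q$ becomes the scalar extension of the centralizer of that factor.
\end{itemize}
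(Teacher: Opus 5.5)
\textbf{Where you match the paper.} Your reduction is the paper's. The involution $\sigma'=\Int(\ell)\circ\sigma_A$ still extends $\gamma$, Theorem~\ref{thm:main} kills $\overline{\pi(\sigma')}$, and the Hauptsatz makes $\pi(\sigma')$ hyperbolic. For separable $L$, the ``adapted decomposition'' you hope for is exactly \cite[Prop.~7.6]{BGBT3}: $\sigma'$ decomposes along $L$, so $A\simeq Q_0\otimes_FB$ with $L\subset B$. That is how the paper concludes in this case.

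\textbf{The gap.} Definition~\ref{def:deltaLFQ} also allows $L=K_1\otimes_FK_2$ with $K_1$ separable and $K_2$ inseparable. There $L$ is neither \'etale nor Galois over $F$, so your claim that ``our $L$ qualifies'' is false. The spaces $W_i$ and the decomposition results of \cite{BGBT3} and \cite{CRAS} are built only for \'etale biquadratic subalgebras of $\Symd(\sigma')$. Your fallback does not close this case. Knowing $B\simeq B_0\otimes_FK$ for the centralizer of one quadratic subfield says nothing about how $L$ sits inside $B_0\otimes_FK$. Compare Remark~\ref{rem:deltaKFtriv}, where $B'$ descends but $Q$ does not. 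The step ``arrange that $L$ lies in a $\sigma'$-stable quaternion factor'' is the whole difficulty.

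\textbf{The missing idea.} The paper works in the centralizer $B_2$ of the \emph{inseparable} field $K_2$, with $\sigma_{B_2}=\sigma'\rvert_{B_2}$.
Take $i_1\in K_1$ with $T_{K_1/F}(i_1)=1$, and set $V=\{v\in\Symd(\sigma_{B_2})\mid vi_1=(i_1+1)v\}$.
By Lemma~\ref{lem:insep1} and Proposition~\ref{prop:sNrp}, $(t_{K_2})_*(\sq_V)$ is Witt-equivalent to $\pi(\sigma')$, hence hyperbolic.
Then \cite[Th.~2.2]{BGBT2} gives $\sq_V\simeq\varphi_{K_2}$ for some form $\varphi$ over $F$, so some $v\in V$ has $v^2\in F^\times$.
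The elements $i_1$ and $v$ generate a quaternion $F$-algebra $Q_1\supset K_1$ inside $B_2$.
Its centralizer $Q_2$ in $B_2$ satisfies $N_{K_2/F}[Q_2]=0$ by Theorem~\ref{thm:quadesc}, so $Q_2\simeq Q_0\otimes_FK_2$ by Proposition~\ref{prop:descquat}.
Hence $Q$, the centralizer of $L$ in $B_2$, is isomorphic to $Q_0\otimes_FL$.

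\textbf{A slip in the ``if'' direction.} For separable $K_i$, your $\tau_i=\Int(k_i)\circ\overline{\phantom{x}}$ acts on $K_i$ as the nontrivial automorphism. Moreover $\tau_i^2=\Int(k_i\overline{k_i}^{-1})\neq\mathrm{id}$, since $k_i\overline{k_i}^{-1}\notin F$. So $\tau_i$ is neither an involution nor trivial on $K_i$. Use instead $\Int(j)\circ\overline{\phantom{x}}$ with $j$ a trace-zero unit inducing the nontrivial automorphism of $K_i$. Alternatively, take the involution adjoint to $(x,y)\mapsto s(xy)$ on $K_i$ for a nonzero $F$-linear form $s$.
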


\begin{proof}
  Assume $Q=Q_0\otimes_FL$ for some quaternion $F$-algebra $Q_0$, and
  let $L\simeq K_1\otimes_FK_2$ for some quadratic extensions $K_1$,
  $K_2$ of $F$. Each $\End_F(K_i)$ is a split quaternion $F$-algebra,
  and a canonical embedding $K_i\hookrightarrow \End_F(K_i)$ is given
  by left multiplication. We may therefore take
  $A=Q_0\otimes_F\End_F(K_1)\otimes_F\End_F(K_2)$ in
  Definition~\ref{def:deltaLFQ}, since we may consider $Q$ as a
  subalgebra of $A$. For $i=1$, $2$, let $\theta_i$ be an involution
  on $\End_F(K_i)$ leaving $K_i$ fixed. The involution
  $\sigma_A=\gamma_0\otimes\theta_1\otimes\theta_2$ on $A$, where
  $\gamma_0$ is the canonical involution on $Q_0$, extends the
  canonical involution $\gamma$ on $Q$. Since $\sigma_A$ is decomposable,
  $\pi(\sigma_A)$ is hyperbolic by~\cite[Th.~7.11]{BGBT3} or
  \cite[Prop.~8]{CRAS}, so $\delta_{L/F} (Q)=0$ as required. 

  For the converse, assume $\delta_{L/F}(Q)=0$ and consider an
  involution $\sigma_A$ extending $\gamma$ on a central simple
  $F$-algebra $A$ of degree~$8$ containing $Q$. By definition, the
  equation $\delta_{L/F}(Q)=0$ means that
  \[
    \overline{\pi(\sigma_A)} = \overline{(s_L)_*(\bpf\ell \Nrd_Q)}\,\in \overline I^3_q(F)
    \qquad\text{for some $\ell\in L^\times$.}
  \]
  Lemma~\ref{lem:tech} allows us to reformulate this equation as
  \[
    \overline{\pi(\sigma_A)} = \overline{\bpf{\Nrp_{\sigma_A}(\ell)}q_A}\,\in \overline I^3_q(F)
  \]
  for any quadratic form $q_A\in I^2_q(F)$ such that
  $e_2(q_A)=[A]$. For $\sigma'_A = \Int(\ell)\circ\sigma_A$ we then
  have $\overline{\pi(\sigma'_A)}=0$ in $\overline I^3_q(F)$ by
  Theorem~\ref{thm:main}. Since $\pi(\sigma'_A)$ is a $3$-fold
  Pfister form, this implies that $\pi(\sigma'_A)$ is
  hyperbolic. 
  Note that $\ell\in L^\times$ commutes with $Q$, so that $\sigma'_A$ also extends the canonical involution $\gamma$ of $Q$. 
 
  If $L$ is separable over $F$,
  it follows from~\cite[Prop.~7.6]{BGBT3} that $\sigma'_A$ decomposes
  ``along $L$'', which implies that $A\simeq Q_0\otimes_FB$ for some
  quaternion $F$-algebra $Q_0$ and some biquaternion $F$-algebra $B$
  containing $L$. As $Q$ is the centralizer of $L$ in $A$, we then
  have $Q\simeq Q_0\otimes_FL$, and the proof is complete when $L$ is
  separable over $F$.

  For the rest of the proof, assume $L=K_1\otimes_FK_2$ where $K_1$
  (resp.~$K_2$) is a separable (resp.~inseparable) quadratic extension
  of $F$. Let $B_2\subset A$ be the centralizer of $K_2$ in $A$ and
  $\sigma_{B_2}=\sigma'_A\rvert_{B_2}$ be the restriction of
  $\sigma'_A$ 
  to $B_2$. Let also $i_1\in K_1$ be such that $T_{K_1/F}(i_1)=1$, and
  consider
  \[
    V=\{v\in\Symd(\sigma_{B_2})\mid vi_1=(i_1+1)v\}.
  \]
  Lemma~\ref{lem:insep1} shows that the squaring map $\sq_V\colon v\to
  v^2$ is a nonsingular quadratic form $V\to K_2$, whose transfer
  $(t_{K_2})_*(\sq_V)$ for any $F$-linear functional $t_{K_2}\colon K_2\to F$
  such that $t_{K_2}(1)=0$ is Witt-equivalent to
  $(t_{K_2})_*(\Nrp_{\sigma_{B_2}})$. It is therefore also Witt-equivalent
  to $\pi(\sigma'_A)$ by Proposition~\ref{prop:sNrp}, hence it is
  hyperbolic. Theorem~2.2 in \cite{BGBT2} then shows that $\sq_V
  \simeq \varphi_{K_2}$ for some nonsingular quadratic form
  $\varphi\in I_q(F)$. In particular, we may find $v\in V$ such that
  $v^2\in F^\times$. Then $i_1$ and $v$ generate a quaternion
  $F$-algebra $Q_1\subset B_2$ containing $K_1$, and $B_2\simeq
  Q_1\otimes_FQ_2$ for $Q_2$ the centralizer of $Q_1$ in $B_2$. As
  $[B_2]=[A]_{K_2}$, Theorem~\ref{thm:quadesc} shows that
  $N_{K_2/F}[B_2]=0$, hence $N_{K_2/F}[Q_2]=0$ since
  $N_{K_2/F}[(Q_1)_{K_2}]=0$. Therefore,
  Proposition~\ref{prop:descquat} yields a quaternion $F$-algebra
  $Q_0$ such that $Q_2\simeq Q_0\otimes_FK_2$. Thus, $B_2\simeq
  Q_1\otimes_FQ_0\otimes_FK_2$. 
  The algebra $Q$ is the centralizer of $K_1\otimes_F K_2$ in $A$, hence also in $B_2$. 
  Since $K_1\subset Q_1$, it follows that $Q\simeq
  Q_0\otimes_FK_1\otimes_FK_2=Q_0\otimes_FL$. 
\end{proof}

We complete this section with an explicit computation of the invariant
$\delta_{L/F}(Q)$, which shows that this invariant is the same as the
one defined by Laghribi--Lorenz in~\cite{LL} (even though 
they view it as an invariant of a central simple
$F$-algebra $A$ of degree~$8$ containing $Q$).

\begin{prop}
  \label{prop:exdeltaLF}
  Let $Q$ be a quaternion algebra over a biquadratic field extension
  $L$ of $F$. Assume $L$ is not purely inseparable over $F$ and
  let $L\simeq F\bigl(\wp^{-1}(\alpha)\bigr)\otimes_FK$ for some
  $\alpha\in F$ and some quadratic field extension $K$ of $F$
  (separable or not). Assume moreover that $Q$ satisfies the
  conditions in Theorem~\ref{thm:biquaddescBr}; then
  \[
    Q\simeq[\alpha_0,\beta_0)_L \qquad\text{for some $\alpha_0\in F$
      and $\beta_0\in K^\times$,}
  \]
  and there exists $\beta\in K^\times$ such that
  \begin{equation}
    \label{eq:NKF2}
    [\alpha_0,N_{K/F}(\beta_0))_F \simeq [\alpha,N_{K/F}(\beta))_F.
  \end{equation}
  For any such $\beta\in K^\times$, the invariant $\delta_{L/F}(Q)$
  vanishes if $\beta\in F$ or $\beta_0\beta\in F$; if $\beta\notin F$
  and $\beta_0\beta\notin F$, 
  then $\delta_{L/F}(Q)$ is represented in $\overline
  I^3_q(F)/\overline{s_*(\bpf{L^\times}\Nrd_Q)}$ by the 
  $3$-fold Pfister form
  \[
    \qpf{t_K(\beta)t_K(\beta_0\beta),
      N_{K/F}(\beta_0\beta),\alpha_0},
  \]
  where $t_K\colon K\to F$ is any nonzero $F$-linear functional such
  that $t_K(1)=0$.
\end{prop}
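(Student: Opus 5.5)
Write $K_1=F\bigl(\wp^{-1}(\alpha)\bigr)$, so that $L=K_1\otimes_FK$. The plan has three stages: first obtain the normal form of $Q$ and the element $\beta$ from the descent machinery of Section~\ref{sec:quadesc}, then build an explicit algebra $A$ of degree~$8$ containing $Q$, and finally compute the discriminant Pfister form through Proposition~\ref{prop:sNrp} applied to the biquaternion centralizer of $K$.

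\emph{Normal form and existence of $\beta$.} Since $N_{L/K}[Q]=0$, Proposition~\ref{prop:descquat}(a) over the quadratic extension $L/K$ gives a quaternion $K$-algebra $Q_K$ with $Q\simeq Q_K\otimes_KL$. We then need to choose a slot of $Q_K$ lying in $F$. In the separable case this follows from the common slot Lemma~\ref{lem:inscomslot}. In the inseparable case we use squaring as in the proof of Proposition~\ref{prop:descquat}. This gives $Q\simeq[\alpha_0,\beta_0)_L$ with $\alpha_0\in F$ and $\beta_0\in K^\times$. Next, the condition $N_{L/K_1}[Q]=0$ says that $N_{K/F}[[\alpha_0,\beta_0)_K]=[[\alpha_0,N_{K/F}(\beta_0))_F]$ is split by $K_1$. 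Its index is at most~$2$, so it contains $K_1$ and equals $[\alpha,\gamma)_F$ for some $\gamma$. We also need $\gamma\in N_{K/F}(K^\times)$ up to the relevant classes. For this we argue as in the proof of Theorem~\ref{thm:biquaddescBr}, using the common slot lemma with the two slots $\alpha_0,\alpha$ and the splitting of $[\alpha+\alpha_0,\cdot)$, or the element $j$ in the mixed case. This yields $\beta$ with \eqref{eq:NKF2}.

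\emph{Construction of $A$.} Put $B=[\alpha_0,\beta_0)_K\otimes_K[\alpha,\beta)_K$. By \eqref{eq:NKF2} we have $N_{K/F}[B]=0$, so Theorem~\ref{thm:quadesc} yields $A$ of degree~$8$ containing $B$ as the centralizer of $K$. Then $L=K(\wp^{-1}(\alpha))\subset[\alpha,\beta)_K$, and the centralizer of $L$ in $A$ is $[\alpha_0,\beta_0)_L\simeq Q$. Choose the symplectic involution $\sigma_B=\gamma_1\otimes\gamma_2$ on $B$, where $\gamma_1,\gamma_2$ are the canonical involutions of the two factors, and extend it to $\sigma_A$ by \cite[(4.14)]{BoI}. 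Since $\sigma_A$ restricts to the canonical involution on the centralizer $Q$, it is admissible in Definition~\ref{def:deltaLFQ}. By Proposition~\ref{prop:sNrp}, $\pi(\sigma_A)$ is then a scalar multiple of $(t_K)_*(\Nrp_{\sigma_B})$ modulo hyperbolic forms.

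\emph{Computing $(t_K)_*(\Nrp_{\sigma_B})$.} This computation is the main obstacle. An Albert form for $\sigma_B$ is
\[
  \varphi_B = [1,\alpha_0]\perp\beta_0[1,\alpha_0]\perp[1,\alpha]\perp\beta[1,\alpha],
\]
which is similar to $\Nrp_{\sigma_B}$ by \cite[(16.8)]{BoI}. Pass to the class in $I^3_q(F)$ modulo $(t_K)_*(\bpf{K^\times}q_B)$, which is allowed by Definition~\ref{def:deltaKF}. The parts of $\varphi_B$ defined over $F$ have hyperbolic transfer. Using the Frobenius reciprocity formula $(t_K)_*(\bil{c}\psi_K)=(t_K)_*(\bil c)\psi$ for forms $\psi$ over $F$, we are reduced to
\[
  (t_K)_*(\bil{\beta_0})[1,\alpha_0]+(t_K)_*(\bil{\beta})[1,\alpha].
\]
Next we rewrite $[1,\alpha]$ modulo $I^2_q$ via \eqref{eq:NKF2} and use the explicit transfer $(t_K)_*(\bil c)\simeq\bil{t_K(c)}\bpf{N_{K/F}(c)}$ for $c\notin F$. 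This transfer vanishes Witt-wise when $c\in F$, which gives the vanishing cases $\beta\in F$ or $\beta_0\beta\in F$. Combining the terms should produce $\bpf{t_K(\beta)t_K(\beta_0\beta),N_{K/F}(\beta_0\beta)}[1,\alpha_0]$ modulo the indeterminacy subgroup.

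It remains to translate the subgroup $(t_K)_*(\bpf{K^\times}q_B)$ into $(s_L)_*(\bpf{L^\times}\Nrd_Q)$, using Lemma~\ref{lem:tech} and Definition~\ref{def:deltaKF}; this is needed because Definition~\ref{def:deltaLFQ} only allows involutions extending $\gamma$, and it is satisfied here. I expect the delicate part to be the bookkeeping of the scalar $t_K(\beta)$ and the absorption of the stray terms into the indeterminacy subgroup.
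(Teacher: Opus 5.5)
\textbf{There is a genuine gap:} your involution is not admissible, and the quotient you compute in is too coarse to recover $\delta_{L/F}(Q)$.

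\emph{The involution.} Write $L=K(i)\subset[\alpha,\beta)_K$ with $i^2+i=\alpha$. The canonical involution $\gamma_2$ sends $i$ to $i+1$. So $\sigma_B=\gamma_1\otimes\gamma_2$ induces the nontrivial automorphism of $L/K$, and its restriction to $Q=[\alpha_0,\beta_0)_K\otimes_KL$ is unitary, not the canonical involution $\gamma$. Hence $\pi(\sigma_A)$ for your extension does not represent $\delta_{L/F}(Q)$. The admissible involution is $\Int(1\otimes j)\circ\sigma_B$, and $\Nrp_{\sigma_B}(1\otimes j)=\beta$. So the two discriminant forms differ in $\overline I^3_q(F)$ by $\overline{\bpf{N_{K/F}(\beta)}q_A}$. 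This lies in $\overline{\bpf{N_{K/F}(K^\times)}q_A}$, but has no reason to lie in $\overline{\bpf{N_{L/F}(L^\times)}q_A}=\overline{(s_L)_*(\bpf{L^\times}\Nrd_Q)}$.

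\emph{The quotient.} You then replace $\Nrp_{\sigma_B}$ by an arbitrary Albert form and work modulo $\overline{(t_K)_*(\bpf{K^\times}q_B)}$. That computes only $\delta_{K/F}(B)=\Phi(\delta_{L/F}(Q))$. There is no ``translation'' back, since a class modulo a larger subgroup does not determine its class modulo a smaller one. Remark~\ref{rem:deltaKFtriv} shows the loss is real for exactly the algebra $[\alpha_0,\beta_0)_K\otimes_K[\alpha,\beta)_K$ you build: there $\delta_{K_1/F_1}(B')=0$ while $\delta_{L_1/F_1}(Q)\neq0$.

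The symptom is visible in your own formula. The transfers that occur are $(t_K)_*(\bil{\beta_0})$ and $(t_K)_*(\bil\beta)$. These would produce the vanishing conditions $\beta_0\in F$ or $\beta\in F$ and the scalar $t_K(\beta_0)t_K(\beta)$. The statement has $\beta\in F$ or $\beta_0\beta\in F$ and the scalar $t_K(\beta)t_K(\beta_0\beta)$.

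\textbf{The missing idea.} Take $\sigma_B=\gamma_1\otimes(\Int(j)\circ\gamma_2)$. It fixes $i$ because $jij^{-1}=i+1$, so it restricts to $\gamma$ on $Q$. Then compute $(t_K)_*(\Nrp_{\sigma_B})$ exactly in $\overline I^3_q(F)$, with no quotient; Proposition~\ref{prop:sNrp} gives $\overline{\pi(\sigma_A)}$ on the nose. By \cite[(16.15)]{BoI}, $\Nrp_{\sigma_B}\simeq\bil\beta\bigl([1,\alpha_0+\alpha]\perp\bil{\beta_0}[1,\alpha_0]\perp\bil\beta[1,\alpha]\bigr)$. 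The overall factor $\bil\beta$ is what yields $(t_K)_*(\bil\beta)[1,\alpha_0+\alpha]\perp(t_K)_*(\bil{\beta_0\beta})[1,\alpha_0]$. Then \eqref{eq:NKF2} gives $\qpf{N_{K/F}(\beta),\alpha_0+\alpha}\simeq\qpf{N_{K/F}(\beta_0\beta),\alpha_0}$, and the two terms combine into the stated Pfister form.

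\textbf{A secondary gap in your first stage.} When $K$ is separable, the common slot lemma does not give a slot $\alpha_0\in F$ for the descended quaternion $K$-algebra. You need the fact that a quaternion $K$-algebra whose norm has index at most~$2$ is split by some $K(\wp^{-1}(\alpha_0))$ with $\alpha_0\in F$ \cite[Th.~1.1]{BGBT2}.

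For $\beta$, the common slot argument of Theorem~\ref{thm:biquaddescBr} works only with the slots of $K_1$ and the third quadratic subfield of $L$, not with $\alpha_0$ and $\alpha$. More simply, take any $A$ from Theorem~\ref{thm:biquaddescBr}. The centralizer of the descended algebra inside the centralizer $B$ of $K$ is some $[\alpha,\beta)_K\supset L$, and $N_{K/F}[B]=0$ then gives \eqref{eq:NKF2}.
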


\begin{proof}
  Fix some central simple $F$-algebra $A$ of degree~$8$ containing $Q$
  such that $2[A]=0$, and let $B$ be the centralizer of $K$ in $A$.
  Since $N_{L/K}[Q]=0$, Proposition~\ref{prop:descquat} yields a
  quaternion $K$-algebra $Q_0$ such that $Q\simeq Q_0\otimes_KL$. The
  centralizer of $Q_0$ in $B$ is therefore a quaternion $K$-algebra
  $Q_1$ containing $L$. Thus, $B=Q_0\otimes_KQ_1$ and
  $Q_1=[\alpha,\beta)_K$ for some $\beta\in K^\times$. Since $B\subset
  A$, Theorem~\ref{thm:quadesc} shows that $N_{K/F}[B]=0$, hence
  \begin{equation}
    \label{eq:NKF1}
    N_{K/F}[Q_0] = N_{K/F}[Q_1] = [[\alpha, N_{K/F}(\beta))_F].
  \end{equation}
  These equations show that the index of $N_{K/F}[Q_0]$ is at
  most~$2$, hence by~\cite[Th.~1.1]{BGBT2} $Q_0$ is split by some
  quadratic \'etale $K$-algebra of the form
  $K\bigl(\wp^{-1}(\alpha_0)\bigr)$ with $\alpha_0\in F$. We may then
  find $\beta_0\in K^\times$ such that 
  $Q_0\simeq[\alpha_0,\beta_0)_K$. It follows that
  $Q\simeq[\alpha_0,\beta_0)_L$, and~\eqref{eq:NKF1}
  yields~\eqref{eq:NKF2}.

  To complete the proof, we calculate $\delta_{L/F}(Q)$. Let $\beta\in
  K^\times$ be such that~\eqref{eq:NKF2} holds, and let
  \[
    B'=[\alpha_0,\beta_0)_K\otimes_K[\alpha,\beta)_K.
  \]
  Since $Q\simeq
  [\alpha_0,\beta_0)_K\otimes_KK(\wp^{-1}(\alpha))$, we may identify
  $Q$ with a subalgebra of $B'$. From~\eqref{eq:NKF2}, it follows that
  $N_{K/F}[B']=0$, hence by Theorem~\ref{thm:quadesc} there exists a
  central simple $F$-algebra $A'$ of degree~$8$ containing $B'$, hence
  also $Q$, such that $2[A']=0$.
  
  If $\beta\in
  F^\times$, then $N_{K/F}(\beta)\in F^{\times2}$ hence
  \eqref{eq:NKF2} shows that
  $[\alpha_0,N_{K/F}(\beta_0))_F$ is split. By
  Proposition~\ref{prop:descquat} it follows that $Q_0\simeq
  Q'_0\otimes_FK$ for some quaternion $F$-algebra $Q'_0$. Then
  $Q\simeq Q'_0\otimes_FL$ and Theorem~\ref{thm:biquadesc} yields
  $\delta_{L/F}(Q)=0$.

  Similarly, if $\beta_0\beta\in F^\times$, then $N_{K/F}(\beta_0)
  \equiv 
  N_{K/F}(\beta)\bmod F^{\times2}$, hence \eqref{eq:NKF2} shows that
  $[\alpha_0+\alpha,N_{K/F}(\beta_0))_F$ is split. By
  Proposition~\ref{prop:descquat} we may then find a quaternion
  $F$-algebra $Q''_0$ such that $[\alpha_0+\alpha,\beta_0)_K \simeq
  Q''_0\otimes_FK$. But since $\alpha\in\wp(L)$ we have
  $Q_0\simeq[\alpha_0+\alpha, \beta_0)_L$, hence $Q\simeq
  Q''_0\otimes_FL$ and $\delta_{L/F}(Q)=0$ by
  Theorem~\ref{thm:biquadesc}.

  For the rest of the proof, assume $\beta\notin F$ and
  $\beta_0\beta\notin F$, and choose a nonzero $F$-linear functional
  $t_K\colon K\to F$ such that $t_K(1)=0$, hence
  $t_K(\beta_0\beta)t_K(\beta)\neq0$. Let $(1,i_0,j_0,i_0j_0)$ and
  $(1,i,j,ij)$ be the quaternion bases of $[\alpha_0,\beta_0)_K$ and
  $[\alpha,\beta)_K$ respectively, such that
  \[
    i_0^2+i_0=\alpha_0,\quad j_0i_0=(i_0+1)j_0,\quad
    j_0^2=\beta_0\quad\text{and} \quad 
    i^2+i=\alpha,\quad ji=(i+1)j,\quad j^2=\beta.
  \]
  Let $\gamma_0$ and $\gamma$ be the canonical involutions on
  $[\alpha_0,\beta_0)_K$ and $[\alpha,\beta)_K$ respectively, and
  consider the involution $\sigma_{B'}$ on $B'$ given by
  \[
    \sigma_{B'} = \gamma_0\otimes(\Int(j)\circ\gamma).
  \]
  This involution $\sigma_{B'}$ extends the canonical involution on
  $Q$, and \cite[(16.15)]{BoI} yields
  \[
    \Nrp_{\sigma_{B'}}\simeq \bil\beta \bigl([1,\alpha_0+\alpha] \perp
    \bil{\beta_0}[1,\alpha_0] \perp \bil\beta [1,\alpha]\bigr),
  \]
  hence by Frobenius reciprocity (see~\cite[Prop.~20.2]{EKM})
  $(t_K)_*(\Nrp_{\sigma_{B'}})$ is Witt-equivalent to
  \begin{multline}
    \label{eq:NKF3}
    (t_K)_*(\bil\beta)[1,\alpha_0+\alpha] \perp
    (t_K)_*(\bil{\beta_0\beta})[1,\alpha_0] \simeq \\
    \bil{t_K(\beta)}\qpf{N_{K/F}(\beta),\alpha_0+\alpha} \perp
    \bil{t_K(\beta_0\beta)}\qpf{N_{K/F}(\beta_0\beta),\alpha_0}. 
  \end{multline}
  Observe that~\eqref{eq:NKF2} implies $[\alpha_0+\alpha,
  N_{K/F}(\beta_0\beta))_F \simeq [\alpha_0,N_{K/F}(\beta_0\beta))_F$,
  hence
  \[
    \qpf{N_{K/F}(\beta),\alpha_0+\alpha} \simeq
    \qpf{N_{K/F}(\beta_0\beta), \alpha_0}
  \]
  and we may rewrite the right side of~\eqref{eq:NKF3} as
  \[
    \bil{t_K(\beta), t_K(\beta_0\beta)}
    \qpf{N_{K/F}(\beta_0\beta),\alpha_0}.
  \]
  Therefore, 
  $
    \overline{(t_K)_*(\Nrp_{\sigma_{B'}})} =
    \overline{\qpf{t_K(\beta)t_K(\beta_0\beta),
        N_{K/F}(\beta_0\beta),\alpha_0}}
  $ in $\overline I^3_q(F)$.
  For any involution $\sigma_{A'}$ on $A'$ extending $\sigma_{B'}$ we
  have 
  $\overline{\pi(\sigma_{A'})} = \overline{(t_K)_*(\Nrp_{\sigma_{B'}})}$
  by Proposition~\ref{prop:sNrp}. Since by definition
  $\delta_{L/F}(Q)$ 
  is represented by $\overline{\pi(\sigma_{A'})}$, the proof is
  complete. 
\end{proof}

\section{Nontriviality of the invariants}
\label{sec:nontriv}

The purpose of this section is to give examples where the various
invariants introduced in Sections~\ref{sec:Delta},
\ref{sec:Quadbiquat} and \ref{sec:biquadesc} do not vanish.

Let $L$ be a biquadratic field extension of $F$ contained in a central
simple $F$-algebra $A$ of degree~$8$ such that $2[A]=0$. Assume $L$ is
not purely inseparable over $F$, and let
$K\subset L$ be a quadratic extension of $F$ in $L$. Let also $Q$ and
$B$ be the centralizers of $L$ and $K$ in $A$ respectively, so 
\[
  F\subset K\subset L\subset Q\subset B\subset A.
\]
Fix nonzero $F$-linear functionals $s_L\colon L\to F$ and
$s_K\colon K\to F$, and let $q_B\in 
I^2_q(K)$, $q_A\in I_q^2(F)$ be such that $e_2(q_B)=[B]$ and
$e_2(q_A)=[A]$. As observed in Lemma~\ref{lem:tech}, we have
\[
  \overline{(s_L)_*(\bpf{L^\times}\Nrd_Q)} =
  \overline{\bpf{N_{L/F}(L^\times)}q_A}.
\]
Similarly, since $\overline{(q_A)_K}=\overline{q_B}$, Frobenius
reciprocity yields
\[
  \overline{(s_K)_*(\bpf{K^\times}q_B)} =
  \overline{\bpf{N_{K/F}(K^\times)}q_A}.
\]
Therefore, there are canonical maps
\begin{equation}
  \label{eq:canmaps}
  \overline I_q^3(F)/\overline{(s_L)_*(\bpf{L^\times}\Nrd_Q)}
  \xrightarrow{\Phi}
    \overline I_q^3(F)/\overline{(s_K)_*(\bpf{K^\times}q_B)}
    \xrightarrow{\Psi}
    \overline I_q^3(F)/\overline{\bpf{F^\times}q_A}
\end{equation}
induced by the inclusions
$\overline{\bpf{N_{L/F}(L^\times)}q_A}\subset
\overline{\bpf{N_{K/F}(K^\times)}q_A}\subset
\overline{\bpf{F^\times}q_A}$.

If $\sigma_A$ is an involution on $A$ extending the canonical
involution on $Q$, then $\pi(\sigma_A)$ represents
 the invariant  $\Delta(A)\in
\overline I_q^3(F)/\overline{\bpf{F^\times}q_A}$, but also 
$\delta_{L/F}(Q)\in \overline
I_q^3(F)/\overline{(s_L)_*(\bpf{L^\times}\Nrd_Q)}$, and
$\delta_{K/F}(B) \in \overline
I_q^3(F)/\overline{(s_K)_*(\bpf{K^\times}q_B)}$, 
see Definitions~\ref{def:Delta},~\ref{def:deltaKF} and \ref{def:deltaLFQ}. Therefore,
\begin{equation}
  \label{eq:Gamma}
  \Phi(\delta_{L/F}(Q)) = \delta_{K/F}(B)
  \qquad\text{and}\qquad
  \Psi(\delta_{K/F}(B)) = \Delta(A).
\end{equation}
Thus, any example of division algebra $A$ such that $\Delta(A)\neq0$
readily yields examples of biquaternion algebras $B$ and quaternion
algebras $Q$ such that $\delta_{K/F}(B)\neq0$ and
$\delta_{L/F}(Q)\neq0$. The converse does not hold:
Remark~\ref{rem:deltaKFtriv} yields an example where
$\delta_{L/F}(Q)\neq0$ and $\delta_{K/F}(B)=0$, and
Remark~\ref{rem:Deltatriv} an example where $\delta_{K/F}(B)\neq0$
and $\Delta(A)=0$.
\medbreak

We discuss below two types of construction: first, we show how to
obtain examples where $\Delta(A)\neq0$ (hence $\delta_{K/F}(B)\neq0$
and $\delta_{L/F}(Q)\neq0$) from any indecomposable division algebra
of degree~$8$ and exponent~$2$. In these examples, the biquadratic
extension $L/F$ is separable. Then, we take the opposite approach:
we start by giving an
example of a quaternion algebra $Q$ with $\delta_{L/F}(Q)\neq0$, and
build from this a biquaternion algebra $B$ such that
$\delta_{K/F}(B)\neq0$ and a central simple $F$-algebra $A$
such that $\Delta(A)\neq0$. In contrast
with the construction sketched above, the biquadratic field extension
$L/F$ and the quadratic field extension $K/F$ are not
separable.

In each case, the main
tools for the proofs are residue maps for discrete valuations defined
by Arason~\cite{Ara2018} and used by
Barry--Chapman--Laghribi~\cite{BaCL} and by
Laghribi--Lorenz~\cite{LL}. 

We briefly recall the definitions from~\cite{Ara2018} and
  \cite[\S6]{BaCL}: if $E$ is a field of characteristic~$2$ with a
  discrete valuation $v$ and uniformizing element $\pi$, the
  \emph{tame} subgroup $I_{q}(E)_0\subset I_q(E)$ is the
  $W(E)$-submodule generated by elements of the form $[1,u]$ with
  $v(u)\geq0$. Residue maps
  \[
    \partial_{q0}\colon I_{q}(E)_0 \to I_q(\overline E)
    \qquad\text{and}\qquad
    \partial_{q1}\colon I_{q}(E)_0 \to I_q(\overline E),
  \]
  where $\overline E$ is the residue field of $E$, are defined as
  follows: 
  for $u_1$, $u_2\in E$ with $v(u_1)=0$ and $v(u_2)\geq0$,
  \begin{align*}
    \partial_{q0}(\bil{u_1}[1,u_2])
    &= \bil{\overline{u_1}}[1,\overline{u_2}],
                                     \qquad&
    \partial_{q0}(\bil{u_1\pi}[1,u_2]) &=0
    \\
    \text{and}\quad
    \partial_{q1}(\bil{u_1}[1,u_2])&=0, \qquad&
      \partial_{q1}(\bil{u_1\pi}[1,u_2]) &=                               
    \bil{\overline{u_1}}[1,\overline{u_2}].
  \end{align*}
  Thus, the residue maps $\partial_{q0}$ and $\partial_{q1}$ are
  related to the residue homomorphisms $\partial_0$, $\partial_1\colon
  W(E)\to W(\overline E)$ of~\cite[\S19.A]{EKM} as follows: for $i=0$,
  $1$,
  \[
    \partial_{qi}(\varphi\cdot[1,u]) = \partial_i(\varphi)\cdot
    [1,\overline u] \qquad
    \text{for $\varphi\in W(E)$ and $u\in E^\times$ with $v(u)\geq0$.}
  \]
  For $n\geq2$, let
  $I^n_{q}(E)_0 = 
  I^{n-1}(E)\cdot I_{q}(E)_0$. Arason's theorem in~\cite{Ara2022}
  shows that $I_q^n(E)_0 = I^n_q(E)\cap I_q(E)_0$. Since
  $\partial_i\bigl(I^n(E)\bigr) \subset I^{n-1}(\overline E)$ for
  $i=0$, $1$ and $n\geq1$ (see~\cite[Lemma~19.14]{EKM}), it follows
  that $\partial_{qi}(I_q^n(E)_0) \subset I^{n-1}_q(\overline E)$ for
  $i=0$, $1$ and $n\geq2$. We also use the map
  \[
    \partial_{q+}:=\partial_{q0} + \partial_{q1}\colon I_q(E)_0 \to
    I_q(\overline E)
  \]
  such that for $u_1$, $u_2\in E^\times$ with $v(u_1)=0$ and
  $v(u_2)\geq0$ 
  \[
    \partial_{q+}(\bil{u_1}[1,u_2])=\partial_{q+}(\bil{u_1\pi}[1,u_2]) =
    \bil{\overline{u_1}}[1,\overline{u_2}].
  \]
  The calculations in~\cite[Ex.~19.13]{EKM} show that
  $(\partial_0+\partial_1)\bigl(I^n(E)\bigr) \subset I^n(\overline E)$
  for $n\geq0$, hence $\partial_{q+}(I^n_{q}(E)_0\bigr) \subset
  I^n_q(\overline E)$.
\subsection*{First construction}
  
Let $A$ be a central simple $F$-algebra of degree~$8$ such that
$2[A]=0$. If its index is~$2$ or $4$, then clearly $A$ contains a
separable quadratic field extension of $F$ since it is
Brauer-equivalent to a quaternion algebra or to a biquaternion
algebra. If $A$ is a division algebra, then by a theorem of Rowen
(see~\cite[Th.~5.6.10]{J}) it contains a separable triquadratic extension
of $F$. Thus, assuming $A$ is not split we may fix a separable
quadratic field extension $K=F(\wp^{-1}(a))$ of $F$ in $A$ and
consider its centralizer $B\subset A$. Let $t$ be an indeterminate
over $F$. The $F(t)$-algebra $A\otimes_F[a,t)_{F(t)}$ is split by
every extension of $K$ that splits $A$, hence its index is at most~$8$
and we may fix a central simple $F(t)$-algebra $A'$ of degree~$8$ such
that
\begin{equation}
  \label{eq:Aprime}
  [A']=[A\otimes_F[a,t)_{F(t)}] \qquad\text{in $\Br(F(t))$.}
\end{equation}

\begin{thm}
  \label{thm:const1}
  The following conditions are equivalent:
  \begin{enumerate}
  \item[(i)]
    $\delta_{K/F}(B)=0$;
  \item[(ii)]
    $A'\simeq Q_1 \otimes_{F(t)}
    Q_2 \otimes_{F(t)}Q_3$ for some
    quaternion $F(t)$-algebras $Q_1$, $Q_2$, $Q_3$;
  \item[(iii)]
    $\Delta(A')=0$.
  \end{enumerate}
  They imply that $A$ decomposes into a tensor product of quaternion
  $F$-algebras. 
\end{thm}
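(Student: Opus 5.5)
The implications run (i)$\Rightarrow$(ii)$\Rightarrow$(iii)$\Rightarrow$(i), and the final assertion follows from (i) by Proposition~\ref{prop:deltaKFdesc}. Indeed, if $\delta_{K/F}(B)=0$, then $B\simeq B_0\otimes_FK$ for a biquaternion $F$-algebra $B_0$. The centralizer of $B_0$ in $A$ is then a quaternion algebra containing $K$, so $A\simeq B_0\otimes_F Q'$ decomposes. The implication (ii)$\Rightarrow$(iii) is Proposition~\ref{prop:decomp}.

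For (i)$\Rightarrow$(ii), write $A\simeq B_0\otimes_F[a,b)_F$ with $K=F(\wp^{-1}(a))\subset [a,b)_F$ as above. Then
\[
[A']=[B_0\otimes[a,b)\otimes[a,t)]=[B_0\otimes[a,bt)]
\]
in $\Br(F(t))$. The algebra on the right has degree~$8$, so by comparing degrees $A'\simeq (B_0)_{F(t)}\otimes[a,bt)_{F(t)}$, which is a product of three quaternion algebras.

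The main work is (iii)$\Rightarrow$(i), done with residues for the $t$-adic valuation on $E=F(t)$, whose residue field is $F$. First compute a representative of $\Delta(A')$. Since $A'$ contains $K(t)$ with centralizer $B'$ Brauer-equivalent to $B_{K(t)}$ (because $[a,t)$ splits over $K(t)$), we may take $B'\simeq B_{K(t)}$. Choose an involution $\sigma_{A'}$ extending $\sigma_B\otimes\mathrm{id}$. By Proposition~\ref{prop:sNrp},
\[
\overline{\pi(\sigma_{A'})}=\overline{(T_{K/F})_*(\Nrp_{\sigma_B})_{F(t)}},
\]
which is defined over $F$ with representative $\rho:=(T_{K/F})_*(\Nrp_{\sigma_B})\in I^3_q(F)$, so that $\delta_{K/F}(B)$ is the class of $\rho$. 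Also, we may take $q_{A'}=q_A+\qpf{t,a}$. Thus (iii) gives $\lambda\in F(t)^\times$ with
\[
\rho_{F(t)}\equiv\bpf{\lambda}(q_A+\qpf{t,a}) \bmod I^4_q(F(t)).
\]
Using squares, reduce to $\lambda=u$ or $\lambda=ut$ with $u$ a $t$-adic unit; by a further reduction (a polynomial $\lambda$ factors, and one can try to work with $v$-adic units), we may take $u\in F^\times$ up to terms whose residue vanishes.

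Next apply $\partial_{q0}$ and $\partial_{q1}$. These maps send $I^4_q(E)_0$ into $I^3_q(F)$, so they are well defined modulo $I^3$ on $\overline I^3_q$, and all forms involved are tame since $a$ is a unit. In the case $\lambda=u$, applying $\partial_{q1}$ gives
\[
0\equiv\bpf{u}\qpf{a}\bmod I^3_q(F),
\]
and applying $\partial_{q0}$ gives $\rho\equiv\bpf{u}q_A$. Then $\rho\in\overline{\bpf{F^\times}q_A}$, which is a priori larger than the group $\overline{\bpf{N_{K/F}(K^\times)}q_A}$ we need. The $\partial_{q1}$ relation says that $\bpf{u}[1,a]\in I^3_q(F)$. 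Since it is a $2$-fold Pfister form, it is hyperbolic, so $u\in N_{K/F}(K^\times)$, which gives $\delta_{K/F}(B)=0$. In the case $\lambda=ut$, applying $\partial_{q1}$ gives
\[
0\equiv q_A+\bpf{u}\qpf{a}\bmod I^3_q(F).
\]
Thus $[A]=[(u,a)]$ is split by $K$, hence $B$ is split, and $B$ trivially descends. Applying $\partial_{q0}$ gives $\rho\equiv\bpf{u}q_A$; alternatively, conclude directly since $B$ split implies $\delta_{K/F}(B)=0$.

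The expected obstacle is the reduction of a general $\lambda\in F(t)^\times$ to the form $u$ or $ut$ with $u\in F^\times$, and checking that tame/residue compatibility holds modulo $I^4$. I would handle this by a second-residue argument at the other places of $F(t)$, or by specializing the relation at $t\mapsto$ a suitable value via the $\partial_{q0}$ map after multiplying $\lambda$ by squares and by norms from $K(t)$ (which do not change the class).
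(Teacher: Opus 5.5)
There is a genuine gap, and it sits in the main case of (iii)$\Rightarrow$(i). The following parts of your proposal are correct:
\begin{itemize}
\item (i)$\Rightarrow$(ii) and (ii)$\Rightarrow$(iii);
\item the final assertion, via the centralizer of $B_0$, which is a quaternion algebra containing $K$;
\item the representative $\rho_{F(t)}$ of $\Delta(A')$ with $q_{A'}=q_A+\qpf{t,a}$;
\item both $\partial_{q1}$-relations.
\end{itemize}
In the case $v_0(\lambda)=1$, your shortcut is valid and a little cleaner than the paper's computation: $q_A\equiv\qpf{u(0),a}\bmod I^3_q(F)$, so $[A]_K=0$, hence $B$ is split and trivially descends.

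The problem is the case $v_0(\lambda)=0$. There you need $\rho\equiv\bpf{u(0)}q_A\bmod I^4_q(F)$. But $\partial_{q0}$ only maps $I^4_q(F(t))_0$ into $I^3_q(F)$; already $\partial_0(\bpf{t})=\bil{1}\notin I(F)$. So applying $\partial_{q0}$ to your congruence only gives a relation modulo $I^3_q(F)$, where $\rho$ and $\bpf{u(0)}q_A$ both vanish, and the conclusion you draw does not follow. (You also dropped the term $\partial_{q0}(\bpf{u}\qpf{t,a})=\qpf{u(0),a}$, though that one dies by the $\partial_{q1}$-relation.)

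The missing ingredient is the combined residue $\partial_{q+}=\partial_{q0}+\partial_{q1}$. It maps $I^4_q(F(t))_0$ into $I^4_q(F)$ because $(\partial_0+\partial_1)(I^n(F(t)))\subset I^n(F)$. It sends $\rho_{F(t)}\mapsto\rho$ and $\bpf{u}q_A\mapsto\bpf{u(0)}q_A$. It kills $\bpf{u}\qpf{t,a}$, since $(\partial_0+\partial_1)(\bpf{u,t})=2\bpf{u(0)}=0$ in $W(F)$. Combined with $u(0)\in N_{K/F}(K^\times)$ from your $\partial_{q1}$-relation, this gives $\delta_{K/F}(B)=0$.

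This also shows that the obstacle you single out, reducing $\lambda$ to $u$ or $ut$ with $u\in F^\times$ constant, is not a real obstacle. The residue maps handle an arbitrary $t$-adic unit $u$ through its residue $u(0)\in F^\times$, so you need no factorization, no residues at other places, and no specialization.

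Your suggested reduction is also problematic as stated. Multiplying $\lambda$ by a norm $N_{K(t)/F(t)}(\mu)$ changes $\bpf{\lambda}q_{A'}$ by $\bpf{N_{K(t)/F(t)}(\mu)}q_A$ modulo $I^4_q$, and this term need not lie in $I^4_q(F(t))$.

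Finally, the ``tame/residue compatibility'' you leave open is exactly Arason's theorem $I^4_q(F(t))\cap I_q(F(t))_0=I^4_q(F(t))_0$. Since all forms in your congruence are tame, their difference lies in $I^4_q(F(t))_0$, and only then may $\partial_{q1}$ and $\partial_{q+}$ be applied. You need to invoke this result, not just flag it.
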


\begin{proof}
    (i)~$\Rightarrow$~(ii): By Proposition~\ref{prop:deltaKFdesc}, (i)
    implies 
  that $B\simeq H_1\otimes_FH_2\otimes_FK$ for some quaternion
  $F$-algebras $H_1$, $H_2$. Since $[B]=[A]_K$, it follows that
  $A\otimes_FH_1\otimes_FH_2$ is split by $K$, hence
  \[
    A\simeq
      H_1\otimes_FH_2\otimes_F[a,u)_F\qquad\text{for some $u\in
        F^\times$}.
  \]
  Then~(ii) holds with
  \[
    Q_1 = H_1\otimes_FF(t),\qquad
    Q_2 = H_2\otimes_FF(t)
    \quad\text{and}\quad
    Q_3 = [a,ut)_{F(t)}.
  \]
  \medbreak

  (ii)~$\Rightarrow$~(iii): This readily follows from
  Proposition~\ref{prop:decomp}.
  \medbreak

  (iii)~$\Rightarrow$~(i):
  Let $\sigma_B$ be a symplectic involution on $B$ and $t_K\colon K\to
  F$ a nonzero $F$-linear functional such that $t_K(1)=0$. 
  The algebra $B\otimes_KK(t)$ is Brauer-equivalent to $A_{K(t)}\simeq
  A'_{K(t)}$, therefore, we may  
  consider $B\otimes_KK(t)$ as a subalgebra of $A'$. Hence Proposition~\ref{prop:sNrp} shows that
  $\Delta(A')$ is represented by
  $(t_K)_*(\Nrp_{\sigma_B})_{F(t)}$. Let us write $\varphi=(t_K)_*(\Nrp_{\sigma_B})$, so that $\varphi$ represents
  $\delta_{K/F}(B)$ and $\varphi_{F(t)}$ represents $\Delta(A')$. We
  assume $\Delta(A')=0$ and we have to show that $\delta_{K/F}(B)=0$.

  Let $q_A\in I^2_q(F)$ be such that $e_2(q_A)=[A]$.
  Then $e_2(q_{A}+\qpf{t,a})=[A']$
  by~\eqref{eq:Aprime}, and $\Delta(A')=0$ means that there exists
  $f\in F(t)^\times$ such that
  \begin{equation}
    \label{eq:const11}
    \varphi_{F(t)} \equiv \bpf f (q_A+\qpf{t,a}) \bmod I^4_q(F(t)).
  \end{equation}
  We use the $t$-adic valuation $v_0$ on $F(t)$.
  Since $\varphi$, $q_A$ and $\qpf{t,a}$ are tame quadratic forms,
  \eqref{eq:const11} is modulo $I^4_q(F(t)) \cap I_q(F(t))_0 =
  I^4_q(F(t))_0$. Multiplying $f$ by a suitable power 
  of $t^2$, we may assume $v_0(f)=0$ or $1$. If $v_0(f)=0$, let
  $f(0)\in F^\times$ denote the residue of $f$. Taking the
  image of each side of~\eqref{eq:const11} under the residue maps
  $\partial_{q1}$ and $\partial_{q+}$
  yields respectively
  \[
    0\equiv \qpf{f(0),a} \bmod I^3_q(F)\qquad\text{and}\qquad
    \varphi\equiv \bpf{f(0)}q_A \bmod I^4_q(F).
  \]
  The first relation shows that $f(0)$ is a norm from $K/F$; the
  second relation then shows that $\varphi$ vanishes in the quotient
  $\overline I^3_q(F)/\overline{\bpf{N_{K/F}(K^\times)}q_A}$, hence
  $\delta_{K/F}(B)=0$. Assume 
  next $v_0(f)=1$ and let $f_0=t^{-1}f$. Using $\bpf{f,t} =
  \bpf{f_0,t}$, we find by applying $\partial_{q1}$ and
  $\partial_{q+}$ to~\eqref{eq:const11}
  \[
    0\equiv \bil{f_0(0)}q_A+\qpf{f_0(0),a} \bmod I^3_q(F)
    \qquad\text{and}\qquad 
    \varphi\equiv \bpf{f_0(0)}q_A \bmod I^4_q(F).
  \]
  Then $\varphi\equiv \qpf{f_0(0),f_0(0),a} \equiv0\bmod I^4_q(F)$, so
  $\overline\varphi=0$ in $\overline I^3_q(F)$. Thus,
  $\delta_{K/F}(B)=0$ in each case, proving that (iii) implies (i).

  To complete the proof, observe that in the proof
  of~(i)~$\Rightarrow$~(ii) above, we also saw that (i) implies that
  $A$ decomposes into a tensor product of three quaternion algebras.
\end{proof}

As a consequence of Theorem~\ref{thm:const1}, for any indecomposable
division $F$-algebra $A$ of degree~$8$ and exponent~$2$, such as those
constructed by Rowen~\cite{Rowen}, McKinnie~\cite{McK} or
Karpenko~\cite{Karp}, the corresponding $F(t)$-algebra $A'$ has
nontrivial $\Delta$-invariant. By Rowen's theorem
in~\cite[Th.~5.6.10]{J}, $A'$ contains a separable triquadratic extension
of $F(t)$, hence also a separable biquadratic extension $L'$ and a
separable quadratic extension $K'\subset L'$. In view
of~\eqref{eq:Gamma}, for $Q'$ and $B'$ the centralizers of $L'$ and $K'$
in $A'$, we have $\delta_{L'/F(t)}(Q')\neq0$ and
$\delta_{K'/F(t)}(B')\neq0$.

\begin{remark*}
  The proof of (iii)~$\Rightarrow$~(i) in Theorem~\ref{thm:const1} is
  directly inspired by the proof of~\cite[Prop.~6.1]{BaCL} by
  Barry--Chapman--Laghribi, which shows in cohomological terms that
  the map
  \[
    \overline I^3_q(F)/\overline{\bpf{N_{K/F}(K^\times)}q_A}
    \longrightarrow 
    \overline I^3_q(F(t))/\overline{\bpf{F(t)^\times}(q_A+\qpf{t,a})}
  \]
  induced by scalar extension is injective. Since this map carries
  $\delta_{K/F}(B)$ to $\Delta(A')$, it readily follows
  that~(iii)~$\Rightarrow$~(i).

  Note, however, that the proof of~\cite[Prop.~6.1]{BaCL} has a gap: it is
  claimed without proof in the first lines of p.~315 that Kato's
  isomorphism induces an isomorphism $H_2^{m+1}(L)_0 \to 
  I_q^{m+1}(L)_0/I_q^{m+2}(L)_0$ (where the $0$-subscript denotes the intersection
  with the tame subgroup), whereas the target group is the quotient of
  $I_q^{m+1}(L)_0$ by its intersection with $I_q^{m+2}(L)$. The claim
  holds, however, because Arason's theorem \cite{Ara2022} yields
  $I_q^{m+2}(L)_0 = I_q^{m+1}(L)_0\cap I_q^{m+2}(L)$. 
\end{remark*}

\subsection*{Second construction}

The following notation is used until the end of the section: $k$ is an
arbitrary field of characteristic~$2$ and $a_1$, $a_2$, $t_1$, $t_2$,
$t_3$ are independent indeterminates over $k$. Let
\[
  F=k(a_1,a_2),\qquad F_1=F(t_1),\qquad F_2=F(t_1,t_2), \qquad
  F_3=F(t_1,t_2,t_3).
\]
Let also $K=F(\sqrt{a_1a_2})$ and $L=K(\wp^{-1}(a_2))$, and
\[
  K_i=F_i(\sqrt{a_1a_2}) \qquad\text{and}\qquad
  L_i=K_i\bigl(\wp^{-1}(a_2)\bigr)\qquad\text{for $i=1$, $2$, $3$.}
\]
Consider the following central simple algebras over $L_1$, $K_2$ and
$F_3$ respectively:
\begin{align}
  \label{eq:A1}
  A_1&=[a_1a_2,a_1+a_2)_F\otimes_F[a_1,t_1)_{F_1}\otimes_{F_1}L_1,
  \\
  \label{eq:A2}
  A_2&=[a_1a_2,a_1+a_2)_F\otimes_F[a_1,t_1)_{F_1}\otimes_{F_1}
       [a_2,t_2)_{F_2}\otimes_{F_2}K_2,
  \\
  \label{eq:A3}
  A_3&=[a_1a_2,a_1+a_2)_F\otimes_F[a_1,t_1)_{F_1}\otimes_{F_1}
       [a_2,t_2)_{F_2}\otimes_{F_2}[t_3,a_1a_2)_{F_3}.
\end{align}
We show in \S\ref{subsec:deltaLFnontriv} that $A_1$ is
Brauer-equivalent to a quaternion $L_1$-algebra $Q$ such that
$\delta_{L_1/F_1}(Q)\neq0$, in \S\ref{subsec:deltaKFnontriv} that
$A_2$ is Brauer-equivalent to a biquaternion $K_2$-algebra $B$ such
that $\delta_{K_2/F_2}(B)\neq0$ and in \S\ref{subsec:Deltanontriv}
that $A_3$ is Brauer-equivalent to a central simple $F_3$-algebra $A$
of degree~$8$ such that $\Delta(A)\neq0$.

\subsection{Nontriviality of the invariant of quaternion algebras}
\label{subsec:deltaLFnontriv}

In order to show that the $L_1$-algebra $A_1$ (defined
in~\eqref{eq:A1}) is 
Brauer-equivalent to a quaternion 
algebra, we use the following observation: for any field $E$ of
characteristic~$2$ and any $\alpha_1$, $\alpha_2\in E$ and $\beta_1$,
$\beta_2\in E^\times$ with $\beta_1\neq\beta_2$, 
\begin{equation}
  \label{eq:observation}
    [\alpha_1,\beta_1)_E\otimes_E [\alpha_2,\beta_2)_E \simeq
    [\alpha_1+\alpha_2, \beta_1+\beta_2)_E \otimes_E \bigl[
    \frac{\alpha_1\beta_2+\alpha_2\beta_1}{\beta_1+\beta_2},
    \beta_1\beta_2\bigr)_E. 
\end{equation}
This can be seen as follows: if $i_1$, $i_2$, $j_1$, $j_2\in
[\alpha_1,\beta_1)_E\otimes_E [\alpha_2,\beta_2)_E$ are such that
\[
  i_r^2+i_r=\alpha_r,\qquad j_r^2=\beta_r, \qquad i_rj_r+j_ri_r=j_r
  \quad\text{for $r=1$, $2$}
\]
and $i_1$, $j_1$ commute with $i_2$ and $j_2$, then $i_1+i_2$ and
$j_1+j_2$ generate a quaternion $E$-algebra isomorphic to $[\alpha_1+\alpha_2,
\beta_1+\beta_2)_E$. Its centralizer contains
$\frac{(i_1j_2+j_1i_2)(j_1+j_2)}{\beta_1+\beta_2}$ and $j_1j_2$,
which generate a quaternion $E$-algebra $\bigl[
\frac{\alpha_1\beta_2+\alpha_2\beta_1}{\beta_1+\beta_2},
\beta_1\beta_2\bigr)_E$.

Applying \eqref{eq:observation} with $E=K$, $\alpha_r=a_r$ and
$\beta_r=a_r+\sqrt{a_1a_2}$ shows that
\[
  [a_1,a_1+\sqrt{a_1a_2})_{K}\otimes_{K}
  [a_2,a_2+\sqrt{a_1a_2})_{K} \simeq
  [a_1+a_2,a_1+a_2)_{K} \otimes_{K} [\sqrt{a_1a_2},
  (a_1+a_2)\sqrt{a_1a_2})_{K}.
\]
The first quaternion factor on the right
side is split; since $a_1a_2\equiv \sqrt{a_1a_2}
\bmod\wp(K)$, the
second is isomorphic to \[[\sqrt{a_1a_2},a_1+a_2)_{K}\simeq [a_1a_2,a_1+a_2)_{K}.\] Therefore,
\begin{equation}
  \label{eq:a1a2}
  \bigl[[a_1a_2,a_1+a_2)_{K}\bigr] =
  \bigl[[a_1,a_1+\sqrt{a_1a_2})_{K}
  \otimes_K[a_2,a_2+\sqrt{a_1a_2})_K\bigr].
\end{equation}
Extending scalars to $L_1$ splits the second factor on the right side
since $a_2\in\wp(L_1)$, hence $[a_1a_2,a_1+a_2)_{L_1} \simeq
[a_1,a_1+\sqrt{a_1a_2})_{L_1}$. 
Substituting in \eqref{eq:A1}, we see that $A_1$ is Brauer-equivalent
to the quaternion algebra
\[
  Q = [a_1,(a_1+\sqrt{a_1a_2})t_1)_{L_1}.
\]
The conditions in Theorem~\ref{thm:biquaddescBr} are satisfied because
$A_1$ has a descent to a tensor product of quaternion $F_1$-algebras,
hence the invariant $\delta_{L_1/F_1}(Q)$ is defined.
We may
use Proposition~\ref{prop:exdeltaLF} to compute
$\delta_{L_1/F_1}(Q)$, since $a_1\in F_1$ and $(a_1+\sqrt{a_1a_2})t_1\in K_1^\times$. Note that \[N_{K_1/F_1}((a_1+\sqrt{a_1a_2})t_1)=a_1(a_1+a_2)t_1^2,\] hence
\[
  \bigl[a_1,N_{K_1/F_1}((a_1+\sqrt{a_1a_2})t_1)\bigr)_{F_1} \simeq
  [a_1,a_1+a_2)_{F_1} \simeq [a_2, a_1+a_2)_{F_1} \simeq
  \bigl[a_2,N_{K_1/F_1}(a_2+\sqrt{a_1a_2})\bigr)_{F_1}.
\]
Therefore, Proposition~\ref{prop:exdeltaLF} with $\alpha_0=a_1$,
$\beta_0=(a_1+\sqrt{a_1a_2})t_1$, $\alpha=a_2$ and
$\beta=a_2+\sqrt{a_1a_2}$, and with $t_{K_1}\colon K_1\to F_1$ given
by $t_{K_1}(1)=0$ and $t_{K_1}(\sqrt{a_1a_2})=1$, shows that
$\delta_{L_1/F_1}(Q)$ is 
represented by $\qpf{t_1(a_1+a_2),a_1a_2,a_1}$. Moreover, the Pfister form $\qpf{a_1+a_2,a_1a_2,a_1}$ represents $a_1^2+a_1a_2+a_1(a_1+a_2)=0$, hence it is hyperbolic. Since in addition $\qpf{a_1a_2,a_1}=\qpf{a_2,a_1}$, we finally get that $\delta_{L_1/F_1}(Q)$ is represented by 
$\qpf{t_1,a_2,a_1}$.

\begin{thm}
  \label{thm:deltaLFnontriv}
  $\delta_{L_1/F_1}(Q)\neq0$.
\end{thm}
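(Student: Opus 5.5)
We have to show that $\qpf{t_1,a_2,a_1}$ does not lie in $\overline{(s_{L_1})_*(\bpf{L_1^\times}\Nrd_Q)}$. By the computation before~\eqref{eq:canmaps} (Lemma~\ref{lem:tech}), this subgroup equals $\overline{\bpf{N_{L_1/F_1}(L_1^\times)}q_{A_1}}$. Here $q_{A_1}$ is any form with $e_2(q_{A_1})=[A_1]$ over $F_1$, where $A_1$ is read as the $F_1$-algebra $[a_1a_2,a_1+a_2)\otimes[a_1,t_1)$. We may therefore take
\[
q=\qpf{a_1+a_2,a_1a_2}+\qpf{t_1,a_1}.
\]
We argue by contradiction and suppose
\begin{equation*}
\qpf{t_1,a_2,a_1}\equiv \bpf{f}q \bmod I^4_q(F_1)\qquad\text{for some } f=N_{L_1/F_1}(\ell),\ \ell\in L_1^\times.
\end{equation*}

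\textbf{Residues at $t_1$.} All forms involved are tame for the $t_1$-adic valuation $v_0$ on $F_1=F(t_1)$. By Arason's theorem, the congruence therefore holds modulo $I^4_q(F_1)_0$. Multiplying $f$ by $t_1^2$, we may assume $v_0(f)\in\{0,1\}$.

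\emph{Case $v_0(f)=0$.} Write $c=f(0)\in F^\times$. Applying $\partial_{q1}$ gives
\[
\qpf{a_2,a_1}\equiv\qpf{c,a_1}\bmod I^3_q(F).
\]
Applying $\partial_{q+}$ gives
\[
\qpf{a_2,a_1}\equiv \bpf c\,(\qpf{a_1+a_2,a_1a_2}+\qpf{1,a_1})\bmod I^4_q(F).
\]
Here $\qpf{1,a_1}=\qpf{a_1,a_1}$ is hyperbolic. The first relation says $[a_1,ca_2)_F$ is split. To exploit the second, I would use that $f$ is a norm from $L_1$. Its residue $c$ is then a norm from the residue extension $L/F$: $L_1/F_1$ is unramified at $t_1$ with residue field $L$, and a norm from an unramified extension has residue equal to a norm of the residue. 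Hence $c\in N_{L/F}(L^\times)\subset N_{K/F}(K^\times)\cap F^{\times2}N_{F(\wp^{-1}(a_2))/F}$, and in particular $c$ is a norm from $K=F(\sqrt{a_1a_2})$, so $c\in F^{\times 2}+a_1a_2F^{\times2}$. Combined with $[a_1,ca_2)$ split, this should force a contradiction, namely that $[a_1,a_2)$ or $[a_1,a_1a_2^2\cdot\ldots)$ is split over $k(a_1,a_2)$. This I would check via residues in $a_1$ or $a_2$.

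\emph{Case $v_0(f)=1$.} Put $f=t_1f_0$ and $c=f_0(0)$. Using $\bpf{f,t_1}=\bpf{f_0,t_1}$, the residue $\partial_{q1}$ yields
\[
\qpf{a_2,a_1}\equiv \bil{c}\qpf{a_1+a_2,a_1a_2}+\qpf{c,a_1}\bmod I^3_q(F).
\]
Taking Clifford invariants gives
\[
[a_1,a_2)=[a_1a_2,a_1+a_2)+[a_1,c) \quad\text{in }\Br(F).
\]
By~\eqref{eq:a1a2} and its analogues I would simplify this to a statement of the form "$[a_1,c')$ is split" and then test it against a valuation on $F=k(a_1,a_2)$. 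The norm condition should constrain $c$ as well: $v_0(N_{L_1/F_1}(\ell))$ is even because $L_1/F_1$ is unramified at $t_1$, so $v_0(f)=1$ is impossible outright. This disposes of the case cheaply, and I would establish it first.

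\textbf{Main obstacle.} The main obstacle is the final step of the case $v_0(f)=0$. There one must show that no $c\in N_{L/F}(L^\times)$ satisfies both residue relations, i.e.\ that $\qpf{a_2,a_1}\notin\overline{\bpf{N_{L/F}(L^\times)}q_0}$ over $F=k(a_1,a_2)$ with $q_0=\qpf{a_1+a_2,a_1a_2}$. I would attack it with a second residue computation, for the $a_2$-adic (or $(a_1+a_2)$-adic) valuation on $k(a_1)(a_2)$. I would use that $N_{K/F}(K^\times)$ consists of elements $x^2+a_1a_2y^2$, whose $a_2$-adic residues are controlled, and check that $\qpf{a_2,a_1}$ has nonzero residue $\qpf{a_1}$-type while the right-hand side cannot produce it.
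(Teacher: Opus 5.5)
Your setup and the $\partial_{q1}$ step are correct and coincide with the paper's. You obtain $[a_1,ca_2)_F$ split, where $c$ is the residue of $f$, and $c\in N_{L/F}(L^\times)F^{\times2}$ because $L_1/F_1$ has residue field $L$ at $t_1$. You are also right that $v_0(f)$ is even, so the case $v_0(f)=1$ does not occur.

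There is, however, a genuine gap in how you finish, together with a computational error.

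\textbf{The error.} Your $\partial_{q+}$ relation is wrong. Both $\partial_{q0}$ and $\partial_{q1}$ send $\qpf{t_1,a_2,a_1}$ to $\qpf{a_2,a_1}$, so $\partial_{q+}(\qpf{t_1,a_2,a_1})=0$. Likewise $\partial_{q+}(\bpf f\qpf{t_1,a_1})=0$. The correct relation is only that $\qpf{c,a_1+a_2,a_1a_2}$ is hyperbolic. Your version mixes degrees, and would even yield the spurious contradiction $\qpf{a_2,a_1}\in I^3_q(F)$ immediately.

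\textbf{The gap.} Once you weaken $c\in N_{L/F}(L^\times)$ to $c\in N_{K/F}(K^\times)$, no contradiction is available. Take $c=a_1a_2=N_{K/F}(\sqrt{a_1a_2})$. Then $[a_1,ca_2)_F\simeq[a_1,a_1)_F$ is split; this is exactly the case you flag, and it is not a contradiction. This $c$ also satisfies the corrected $\partial_{q+}$ relation, since $\qpf{a_1a_2,a_1a_2}$ is hyperbolic. The $a_2$-adic valuation you propose does not see the obstruction either. The extension $F(\wp^{-1}(a_2))$ splits at that place, so $N_{L/F}(L^\times)$ contains elements of odd $a_2$-adic value. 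For example, with $\theta^2+\theta=a_2$, the element $N_{L/F}\bigl(\theta+(1+\theta)\sqrt{a_1a_2}\bigr)$ has $a_2$-adic value $1$.

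\textbf{The missing idea.} You must keep the full condition $c\in N_{L/F}(L^\times)$ and work at the $a_2^{-1}$-adic valuation $v$ on $F=k(a_1)(a_2^{-1})$.
\begin{itemize}
\item $F(\wp^{-1}(a_1))/F$ is unramified at $v$, so the split algebra $[a_1,ca_2)_F$ forces $v(ca_2)\in2\Z$.
\item Any extension $w$ of $v$ to $L$ satisfies $w(\sqrt{a_1a_2})=w(\wp^{-1}(a_2))=-\frac12$.
\item The square of $\wp^{-1}(a_2)/\sqrt{a_1a_2}$ has residue $a_1^{-1}$, so the residue field of $L$ contains $\sqrt{a_1}$.
\item By the fundamental inequality, $v$ extends uniquely to $L$ with ramification index $2$ and residue degree $2$.
\end{itemize}
Hence $v(N_{L/F}(\ell))=4w(\ell)\in2\Z$, so $v(c)$ is even and $v(a_2)=-1$ would have to be even, a contradiction. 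This is the paper's argument. It uses only the $\partial_{q1}$ relation; the $\partial_{q+}$ relation plays no role.
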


\begin{proof}
  In view of the calculation of $\delta_{L_1/F_1}(Q)$ above, we have
  to show
  \[
    \overline{\qpf{t_1,a_2,a_1}} \notin
    \overline{(s_{L_1})_*(\bpf{L_1^\times}\Nrd_Q)}
  \]
  for $s_{L_1}\colon L_1\to F_1$ an arbitrary nonzero $F_1$-linear
  functional. 
  Since $Q$ is Brauer-equivalent to $A_1$, it follows from the
  definition of $A_1$ that
  \[
    \overline{\Nrd_Q} = \overline{\bigl(\qpf{a_1+a_2,a_1a_2} +
      \qpf{t_1,a_1}\bigr)_{L_1}} \qquad\text{in $\overline
      I_q^2(L_1)$}.
  \]
  Therefore, by Frobenius reciprocity
  \[
    \overline{(s_{L_1})_*(\bpf{L_1^\times}\Nrd_Q)} =
    \overline{\bpf{N_{L_1/F_1}(L_1^\times)}\bigl(\qpf{a_1+a_2,a_1a_2}
      + \qpf{t_1,a_1}\bigr)} \qquad\text{in $\overline I^3_q(F_1)$}.
  \]
  By way of contradiction, suppose there exists $\ell\in L_1^\times$
  such that
  \begin{equation}
    \label{eq:deltaLFnontriv1}
    \qpf{t_1,a_2,a_1} \equiv \bpf{N_{L_1/F_1}(\ell)}
    \bigl(\qpf{a_1+a_2,a_1a_2} + \qpf{t_1,a_1}\bigr) \bmod I_q^4(F_1).
  \end{equation}
  Consider the $t_1$-adic valuation $v_0$ on $L_1$ (with constant
  field $L$) 
  and let $\ell=\ell_0t_1^n$ with $n=v_0(\ell)$, so $\ell_0$ is a unit
  for the $v_0$-valuation. Its residue is $\ell_0(0)\in
  L^\times$ and the residue of $N_{L_1/F_1}(\ell_0)$ is
  $N_{L/F}(\ell_0(0))$. 
  Since $a_1$, $a_2\in F$, all the
  terms in~\eqref{eq:deltaLFnontriv1} are in the tame subgroup
  $I_q(F_1)_0$, hence the equation is modulo $I_q^4(F_1)\cap
  I_q(F_1)_0 = I_q^4(F_1)_0$. Since $N_{L_1/F_1}(\ell) =
  N_{L_1/F_1}(\ell_0) t_1^{4n}$, we have $\bpf{N_{L_1/F_1}(\ell)}
  \simeq \bpf{N_{L_1/F_1}(\ell_0)}$, hence we may substitute
  $N_{L_1/F_1}(\ell_0)$ for $N_{L_1/F_1}(\ell)$
  in~\eqref{eq:deltaLFnontriv1}. Applying the residue map
  $\partial_{q1}$, we then obtain
  \[
    \qpf{a_2,a_1} \equiv \qpf{N_{L/F}(\ell_0(0)), a_1} \bmod I^3_q(F),
  \]
  hence $\qpf{a_2N_{L/F}(\ell_0(0)), a_1}$ is hyperbolic, and therefore
  \begin{equation}
    \label{eq:deltaLFnontriv2}
    a_2N_{L/F}(\ell_0(0)) = N_{F(\wp^{-1}(a_1))/F}(u) \qquad\text{for
      some $u\in F(\wp^{-1}(a_1))^\times$.}
  \end{equation}
  To complete the proof, we show that \eqref{eq:deltaLFnontriv2} is
  impossible. For this, we use the $a_2^{-1}$-adic valuation $v$ on
  $F=k(a_1)(a_2^{-1})$ (with constant field $k(a_1)$). This valuation
  extends uniquely and without ramification to $F(\wp^{-1}(a_1))$ with
  residue field $k(a_1)(\wp^{-1}(a_1))$, hence, writing again $v$ for
  the unique extension of $v$ to $F(\wp^{-1}(a_1))$,
  \[
    v(N_{F(\wp^{-1}(a_1))/F}(u)) = 2v(u)\in2\Z.
  \]
  To see that $v$ also extends uniquely to $L$, observe that, writing
  $w$ for an extension of $v$ to $L$, we must have
  \[
    w(\sqrt{a_1a_2}) = \textstyle{\frac12}v(a_2)=-\textstyle{\frac12}
    \quad\text{and also}\quad
    w(\wp^{-1}(a_2)) = \textstyle{\frac12}v(a_2)=-\textstyle{\frac12}
  \]
  because $\wp^{-1}(a_2)^2+\wp^{-1}(a_2)=a_2$. Moreover,
  \[
    (\wp^{-1}(a_2)\sqrt{a_1a_2}^{-1})^2 = \wp^{-1}(a_2)(a_1a_2)^{-1} +
    a_1^{-1}
  \]
  with $w(\wp^{-1}(a_2)(a_1a_2)^{-1})=\frac12$, hence the residue
  field $\overline L$ contains $\sqrt{a_1}$. From the fundamental
  inequality
  \[
    [L{:}F]\geq [\overline L{:}\overline F]\,
    (w(L^\times){:}w(F^\times))
  \]
  it then follows that
  $w(L^\times)=\frac12\Z$ and $\overline L = k(\sqrt{a_1})$, hence $w$
  is the unique extension of $v$ to $L$. Therefore,
  \[
    v(N_{L/F}(\ell_0(0))) = 4w(\ell_0(0)) \in 2\Z.
  \]
  Equation~\eqref{eq:deltaLFnontriv2} then yields $v(a_2)\in2\Z$, a
  contradiction. 
\end{proof}

\subsection{Nontriviality of the invariant of biquaternion algebras}
\label{subsec:deltaKFnontriv}

Using~\eqref{eq:a1a2} to substitute $[a_1a_2,a_1+a_2)_K$
in~\eqref{eq:A2} we see that $A_2$ is Brauer-equivalent to the
following biquaternion $K_2$-algebra:
\[
  B=[a_1,(a_1+\sqrt{a_1a_2})t_1)_{K_2}\otimes_{K_2}
  [a_2,(a_2+\sqrt{a_1a_2})t_2)_{K_2}.
\]
Theorem~\ref{thm:quadesc} shows that the condition $N_{K_2/F_2}[B]=0$
is satisfied, because $A_2$ has a descent to a tensor product of
quaternion $F_2$-algebras. We may therefore consider the invariant
$\delta_{K_2/F_2}(B)$.

Since $L_2$ splits the second factor in the definition of $B$, we have
$[B]_{L_2} = [Q]_{L_2}$, 
hence we may identify $Q_{K_2}$ with the centralizer of $L_2$ in
$B$. Therefore, $\delta_{K_2/F_2}(B) =
\Phi(\delta_{L_2/F_2}(Q_{L_2}))$, and the computation of
$\delta_{L_1/F_1}(Q)$ in \S\ref{subsec:deltaLFnontriv} shows that
$\delta_{K_2/F_2}(B)$ is represented by the form $\qpf{t_1,a_2,a_1}$.

\begin{thm}
  \label{thm:deltaKFnontriv}
  $\delta_{K_2/F_2}(B)\neq0$.
\end{thm}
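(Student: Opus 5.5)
We must show that $\overline{\qpf{t_1,a_2,a_1}}\notin\overline{(s_{K_2})_*(\bpf{K_2^\times}q_B)}$ in $\overline I^3_q(F_2)$. By Frobenius reciprocity, with $q_{A_2}=\qpf{a_1+a_2,a_1a_2}+\qpf{t_1,a_1}+\qpf{t_2,a_2}$, this subgroup equals $\overline{\bpf{N_{K_2/F_2}(K_2^\times)}q_{A_2}}$. So, arguing by contradiction, suppose there is $\kappa\in K_2^\times$ with
\[
  \qpf{t_1,a_2,a_1}\equiv\bpf{N_{K_2/F_2}(\kappa)}\bigl(\qpf{a_1+a_2,a_1a_2}+\qpf{t_1,a_1}+\qpf{t_2,a_2}\bigr)\bmod I^4_q(F_2).
\]
The plan is to kill $t_2$ by residues for the $t_2$-adic valuation on $F_2=F_1(t_2)$, reducing to the relation already shown impossible in Theorem~\ref{thm:deltaLFnontriv}.

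All forms involved are tame for the $t_2$-adic valuation $v_0$, since $a_1$, $a_2$ are units. By Arason's theorem the congruence therefore holds modulo $I^4_q(F_2)_0$, and the residue maps may be applied. The extension $K_2=K_1(t_2)$ is unramified over $F_2$ at $t_2$, with residue field $K_1$. Write $\kappa=\kappa_0t_2^n$ with $\kappa_0$ a unit. Then $N_{K_2/F_2}(\kappa)=N(\kappa_0)t_2^{2n}$, so we may replace $N(\kappa)$ by $N(\kappa_0)$, whose residue is $c:=N_{K_1/F_1}(\kappa_0(0))$.

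Now apply $\partial_{q1}$. The left side and the terms $\bpf{N}\qpf{a_1+a_2,a_1a_2}$ and $\bpf N\qpf{t_1,a_1}$ lie in $I_q(F_1)\otimes F_2$, so they have zero $\partial_{q1}$. The term $\bpf{N(\kappa_0)}\qpf{t_2,a_2}$ has residue $\qpf{c,a_2}$. Hence $\qpf{c,a_2}\equiv0\bmod I^3_q(F_1)$, so $c$ is a norm from $F_1(\wp^{-1}(a_2))$. Next apply $\partial_{q+}$, which specializes the constant terms and sends $\qpf{t_2,a_2}$ to $\qpf{1,a_2}=0$. This gives
\[
  \qpf{t_1,a_2,a_1}\equiv\bpf{c}\bigl(\qpf{a_1+a_2,a_1a_2}+\qpf{t_1,a_1}\bigr)\bmod I^4_q(F_1).
\]
Here $c$ is simultaneously a norm from $K_1$ and from $F_1(\wp^{-1}(a_2))$.

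The main obstacle is converting this into a norm from $L_1$, which would contradict the proof of Theorem~\ref{thm:deltaLFnontriv}. Rather than seek such a norm, I would rerun that proof directly from $c$. Use the $t_1$-adic valuation, write $c=c_0t_1^m$, and replace $c$ by $c_0$ when $m$ is even. Then $\partial_{q1}$ gives $a_2\,\overline{c_0}\in N_{F(\wp^{-1}(a_1))/F}$. Here $\overline{c_0}$ is the residue of a norm from $K$, since $K_1/F_1$ is unramified at $t_1$. For the $a_2^{-1}$-adic valuation, $K/F$ is ramified ($\sqrt{a_1a_2}$ has value $-\tfrac12$), so norms from $K$ have even value. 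Norms from $F(\wp^{-1}(a_1))$ also have even value, while $v(a_2)$ is odd, a contradiction; this uses only the $K$-norm property. If $m$ is odd, $c$ cannot be a norm from $K_1$ without the residue of $c_0$ being handled, so I would use $\bpf{c_0t_1,t_1}=\bpf{c_0,t_1}$ as in Theorem~\ref{thm:const1}. Applying $\partial_{q1}$ and $\partial_{q+}$ then leads to a similar parity contradiction via $a_2$-adic values, after checking that the $K$-norm residue again has even $a_2^{-1}$-value. Settling this odd case cleanly is the step I expect to need care.
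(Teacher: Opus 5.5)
\textbf{There is a genuine gap in the final step.}

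Your first half agrees with the paper and is fine. The $t_2$-adic residues $\partial_{q1}$ and $\partial_{q+}$ give three facts: $c=N_{K_1/F_1}(\kappa_0(0))$ is a norm from $K_1$; it is a norm from $F_1(\wp^{-1}(a_2))$; and
\[
\qpf{t_1,a_2,a_1}\equiv\bpf{c}\bigl(\qpf{a_1+a_2,a_1a_2}+\qpf{t_1,a_1}\bigr)\bmod I^4_q(F_1).
\]

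The gap is where you bypass the conversion to an $L_1$-norm and claim the parity argument of Theorem~\ref{thm:deltaLFnontriv} ``uses only the $K$-norm property''. That is false. For the $a_2^{-1}$-adic valuation $v$, the extension $K/F$ is ramified, so norms from $K$ can have odd value. For example, $N_{K/F}(\sqrt{a_1a_2})=a_1a_2$ and $v(a_1a_2)=-1$. The argument in Theorem~\ref{thm:deltaLFnontriv} works only because $L/F$ has $e=f=2$, so $v(N_{L/F}(x))=4w(x)\in2\Z$.

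In fact no argument using only the $K_1$-norm property can succeed. Take $c=a_1a_2=N_{K_1/F_1}(\sqrt{a_1a_2})$; it satisfies your reduced congruence:
\begin{itemize}
\item $\bpf{a_1a_2}\qpf{a_1+a_2,a_1a_2}=\bpf{a_1+a_2}\qpf{a_1a_2,a_1a_2}=0$;
\item $\bpf{a_1a_2}\equiv\bpf{a_1}+\bpf{a_2}\bmod I^2(F_1)$ and $\qpf{a_1,a_1}=0$, so $\bpf{a_1a_2}\qpf{t_1,a_1}\equiv\qpf{t_1,a_2,a_1}\bmod I^4_q(F_1)$.
\end{itemize}
This is just the vanishing $\delta_{K_1/F_1}(B')=0$ of Remark~\ref{rem:deltaKFtriv}. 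So the fact that $c$ is also a norm from $F_1(\wp^{-1}(a_2))$ is indispensable, and your proposal never uses it.

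\textbf{The fix.} The missing step is exactly the ``main obstacle'' you identified, and the paper resolves it with Proposition~\ref{prop:descquat}(b).
\begin{itemize}
\item Since $[a_2,N_{K_1/F_1}(\kappa_0(0)))_{F_1}$ is split, there is $c'\in F_1^\times$ with $[a_2,\kappa_0(0))_{K_1}\simeq[a_2,c')_{F_1}\otimes_{F_1}K_1$.
\item Then $[a_2,\kappa_0(0)c'^{-1})_{K_1}$ is split, so $\kappa_0(0)c'^{-1}=N_{L_1/K_1}(\ell)$ for some $\ell\in L_1^\times$.
\item Hence $c=c'^2N_{L_1/F_1}(\ell)$.
\end{itemize}
Replacing $c$ by $N_{L_1/F_1}(\ell)$ turns your congruence into exactly \eqref{eq:deltaLFnontriv1}. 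This contradicts Theorem~\ref{thm:deltaLFnontriv} directly, with no need to rerun its proof.

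Incidentally, the ``odd $m$'' case you worried about never occurs. Indeed $K_1=K(t_1)$ is unramified over $F_1$ at $t_1$ with residue degree $2$, and $N_{K_1/F_1}$ is squaring. So every norm from $K_1$ has even $t_1$-adic value.
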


\begin{proof}
  Since $B$ is Brauer-equivalent to $A_2$, it follows from the
  definition of $A_2$ in~\eqref{eq:A2} that a form $q_B\in I_q^2(K_2)$
  such that $e_2(q_B) = [B]$ is given by
  \[
    q_B = (\qpf{a_1+a_2,a_1a_2} + \qpf{t_1,a_1} + \qpf{t_2,a_2})_{K_2}.
  \]
  By Frobenius reciprocity, we have
  \[
    \overline{(s_{K_2})_*(\bpf{K_2^\times}q_B)} =
    \overline{\bpf{N_{K_2/F_2}(K_2^\times)} (\qpf{a_1+a_2,a_1a_2} +
      \qpf{t_1,a_1} + \qpf{t_2,a_2})}.
  \]
  If $\delta_{K_2/F_2}(B)=0$, then there exists $g\in K_2^\times$ such
  that
  \begin{equation}
    \label{eq:deltaKFnontriv1}
    \qpf{t_1,a_2,a_1} \equiv \bpf{N_{K_2/F_2}(g)} \bigl(
    \qpf{a_1+a_2,a_1a_2} +\qpf{t_1,a_1} + \qpf{t_2,a_2}\bigr) \bmod
    I^4_q(F_2). 
  \end{equation}
  Our goal is to show that this equation leads to a contradiction.
  
  Consider the $t_2$-adic valuation $v_0$ on $F_2$ with constant field
  $F_1$. Multiplying $g$ by a suitable power of $t_2$, which does not
  change the square class of $N_{K_2/F_2}(g)$, we may assume that
  $v_0(g)=0$, which means that $g(0)\neq0$ when $g$ is 
  viewed in $F_2=F_1(t_2)$. Since $a_1$, $a_2$ and $a_1a_2$ are
  constants, all the terms in~\eqref{eq:deltaKFnontriv1} are in the
  tame subgroup $I_q(F_2)_0$, hence the equation is
  modulo~$I^4_q(F_2)\cap I_q(F_2)_0= I^4_q(F_2)_0$. Applying to each
  side of~\eqref{eq:deltaKFnontriv1} the residue map $\partial_{q1}$,
  we obtain
  \[
    0\equiv \qpf{N_{K_1/F_1}(g(0)),a_2} \bmod I^3_q(F_1),
  \]
  hence the quaternion algebra $[a_2,N_{K_1/F_1}(g(0)))_{F_1}$ is
  split. By Proposition~\ref{prop:descquat}, we may find $c\in
  F_1^\times$ such that $[a_2,g(0))_{K_1}\simeq
  [a_2,c)_{F_1}\otimes_{F_1}K_1$. The quaternion algebra
  $[a_2,g(0)c^{-1})_{K_1}$ is then split, so there exists $\ell\in
  L_1^\times$ such that $g(0)c^{-1}=N_{L_1/K_1}(\ell)$. It follows
  that $N_{K_1/F_1}(g(0)) = c^2N_{L_1/F_1}(\ell)$, hence
  \begin{equation}
    \label{eq:deltaKFnontriv2}
    N_{K_1/F_1}(g(0)) \equiv N_{L_1/F_1}(\ell) \bmod F_1^{\times2}.
  \end{equation}
  On the other hand, applying $\partial_{q+}$ to each side of
  \eqref{eq:deltaKFnontriv1} yields
  \[
    \qpf{t_1,a_2,a_1} \equiv \bpf{N_{K_1/F_1}(g(0))} \bigl(
    \qpf{a_1+a_2,a_1a_2} +\qpf{t_1,a_1} \bigr) \bmod
    I^4_q(F_1).
  \]
  In view of \eqref{eq:deltaKFnontriv2}, we may substitute
  $N_{L_1/F_1}(\ell)$ for $N_{K_1/F_1}(g(0))$; we then see
  that~\eqref{eq:deltaLFnontriv1} holds, a contradiction to
  Theorem~\ref{thm:deltaLFnontriv}. 
\end{proof}

\begin{remark}
  \label{rem:deltaKFtriv}
  Consider the biquaternion $K_1$-algebra
  \[
    B'=[a_1a_2,a_1+a_2)_{F_1}\otimes_{F_1} [a_1,t_1)_{F_1}
    \otimes_{F_1}K_1.
  \]
  Using~\eqref{eq:a1a2}, we see that
  \[
    B'\simeq [a_1,(a_1+\sqrt{a_1a_2})t_1)_{K_1} \otimes_{K_1}
    [a_2,a_2+\sqrt{a_1a_2})_{K_1},
  \]
  hence $[B']_{L_1} = [Q]$. We may therefore identify $Q$ with the
  centralizer of $L_1$ in $B'$, and from~\eqref{eq:Gamma} we obtain
  \[
    \Phi\bigl(\delta_{L_1/F_1}(Q)\bigr) =
    \delta_{K_1/F_1}(B').
  \]
  Now, $\delta_{L_1/F_1}(Q)\neq0$ by Theorem~\ref{thm:deltaLFnontriv},
  but $\delta_{K_1/F_1}(B')=0$ by Proposition~\ref{prop:deltaKFdesc}
  since $B'$ has a descent to $F_1$.
\end{remark}

\subsection{Nontriviality of the $\Delta$-invariant}
\label{subsec:Deltanontriv}

From~\eqref{eq:a1a2}, it follows that all the quaternion algebras in
the definition of $A_3$ in~\eqref{eq:A3} are split by the triquadratic
extension $F_3(\wp^{-1}(a_1),\wp^{-1}(a_2),\sqrt{a_1a_2})$ of
$F_3$. Therefore, $A_3$ is Brauer-equivalent to a central simple
$F_3$-algebra $A$ of degree~$8$, and the invariant $\Delta(A)$ is
defined. 
A quadratic form $q_A\in I^2_q(F_3)$ such that $e_2(q_A)=[A]$ is 
given by
\begin{equation}
  \label{eq:qA}
  q_A=\qpf{a_1+a_2,a_1a_2} + \qpf{t_1,a_1} + \qpf{t_2,a_2} +
  \qpf{a_1a_2,t_3}.
\end{equation}
Extending scalars from $F_3$ to $K_3$
splits the last quaternion factor on the right side of~\eqref{eq:A3},
hence
\[
  [A]_{K_3} = [A_3]_{K_3} = [A_2]_{K_3} = [B]_{K_3}.
\]
Therefore, we may identify $B_{K_3}$ with the centralizer of $K_3$ in
$A$.
It follows that $\Delta(A) = \Psi(\delta_{K_3/F_3}(B_{K_3}))$,
and the computation of $\delta_{K_2/F_2}(B)$
in~\S\ref{subsec:deltaKFnontriv} shows that $\Delta(A)$ is
represented by $\qpf{t_1,a_2,a_1}$.
\medbreak

The proof below that $\Delta(A)\neq0$ follows the same pattern as the
proofs of Theorem~\ref{thm:deltaLFnontriv} and
Theorem~\ref{thm:deltaKFnontriv}, using the $t_3^{-1}$-adic valuation
$v_\infty$ on $F_3=F_2(t_3)$ with constant field $F_2$. For $f\in F_3$
with $v_\infty(f)\geq0$ we write $f(\infty)$ for the image of $f$ in
the residue field $F_2$. In contrast
with~\S\ref{subsec:deltaLFnontriv} and \S\ref{subsec:deltaKFnontriv},
the use of residue maps is more delicate because the form $q_A$ does
not belong to the tame subgroup $I_q(F_3)_0$. Since
$v_\infty(1)+v_\infty(t_3)=-1$, the form $[1,t_3]$ lies in the 
first level $I_q(F_3)_1$ of Arason's filtration
\cite[Th.~3]{Ara2018}, hence also $\qpf{a_1a_2,t_3}\in I_q(F_3)_1$
because $I_q(F_3)_1$ is a module over $W(F_3)$. To prove that
$\Delta(A)\neq0$, we need the following lemma:

\begin{lemma}
  \label{lem:tech3}
  If $f\in F_3^\times$ is such that $\qpf{f,a_1a_2,t_3}
  \in I^3_q(F_3)_0+I^4_q(F_3)$, then there exists
  $\psi\in I^3_q(F_3)_0$ such that
  \[
    \qpf{f,a_1a_2,t_3}\equiv \psi \bmod I_q^4(F_3)
    \qquad\text{and}\qquad
    \partial_{q+}(\psi)=0.
  \]
  Moreover, if $v_\infty(f)=n$, then  $f_0(\infty)\in
  N_{K_2/F_2}(K_2^\times)$ for $f_0=ft_3^n$.
\end{lemma}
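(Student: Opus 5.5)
The plan is to work in Kato--Milne cohomology via $\kappa_4$ (or, equivalently, with Arason's filtration $I_q(F_3)_0\subset I_q(F_3)_1\subset\cdots$ for $v_\infty$). The uniformizer is $\pi=t_3^{-1}$. Since multiplying $f$ by $t_3^{2}$ does not change $\bpf f$, I may assume $f=f_0\pi^{n}$ with $f_0$ a $v_\infty$-unit and $n\in\{0,1\}$ up to parity.

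\textbf{Step 1: translate the form into cohomology and expand it.} The form $\qpf{f,a_1a_2,t_3}$ corresponds to the class of
\[
t_3\,\dlog f\wedge\dlog(a_1a_2) = t_3\,\dlog f_0\wedge\dlog(a_1a_2) + n\,t_3\,\dlog\pi\wedge\dlog(a_1a_2)
\]
in $H^4(F_3)$. I write $f_0=f_0(\infty)(1+\pi g)$ with $v_\infty(g)\ge0$. Then $t_3\dlog f_0 = t_3\dlog f_0(\infty) + t_3\,d(1+\pi g)/(1+\pi g)$, and the second term has $t_3\pi=1$ in front, so it lives at level $\le 0$, i.e.\ it is tame.

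\textbf{Step 2: kill the level-one part using the hypothesis.} Modulo tame classes (the image of $I^3_q(F_3)_0$), the class is therefore
\[
t_3\bigl(\dlog f_0(\infty)\wedge\dlog(a_1a_2) + n\,\dlog\pi\wedge\dlog(a_1a_2)\bigr).
\]
By Kato's description of the graded pieces of the filtration, $\mathrm{gr}_1$ embeds into $\Omega^{2}_{F_2}\oplus\Omega^{1}_{F_2}$ (residue field $F_2$). Concretely, this is the "wild residue" $\omega\mapsto(\overline{\alpha},\overline{\beta})$ for $t_3(\alpha+\beta\wedge\dlog\pi)$. The hypothesis says the class vanishes in $\mathrm{gr}_1$. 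Hence $n\,\dlog(a_1a_2)=0$ in $\Omega^1_{F_2}$ and $\dlog f_0(\infty)\wedge\dlog(a_1a_2)=0$ in $\Omega^2_{F_2}$.

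\textbf{Step 3: read off the conclusions.} Since $a_1a_2\notin F_2^{2}$, $\dlog(a_1a_2)\ne0$, so $n$ is even. The second condition means $f_0(\infty)\in F_2^{2}(a_1a_2)^\times$, by the standard description of the kernel of $\wedge\dlog(a_1a_2)$ on $\dlog$'s (Cartier). The multiplicative group $F_2^{2}(a_1a_2)^\times$ equals $K_2^{2}\cap F_2=N_{K_2/F_2}(K_2^\times)$, because $K_2/F_2$ is purely inseparable and the norm is squaring. With $n$ even, $f_0=ft_3^{n}$ agrees with $f$ up to squares, which gives the "moreover" statement.

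\textbf{Step 4: produce $\psi$.} Since $f_0(\infty)$ is a norm from $F_2(\sqrt{a_1a_2})$, the form $\bpf{f_0(\infty),a_1a_2}$ is hyperbolic. Hence, using $\bpf{xy}\equiv\bpf x+\bpf y\bmod I^2$,
\[
\qpf{f,a_1a_2,t_3}\equiv\qpf{1+\pi g,a_1a_2,t_3}\bmod I^4_q(F_3).
\]
I set $\psi$ to be a tame representative of the right-hand side. To get it explicitly, I rewrite $\bpf{1+\pi g}[1,t_3]$ using $t_3\pi=1$, via the identity $\bpf{1+\pi g}[1,t_3]\simeq\bpf{1+\pi g}[1,t_3+\ldots]$ coming from the fact that $t_3(1+\pi g)$ is represented by the norm form. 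In cohomology this is the statement $t_3\,\dlog(1+\pi g)=d(\pi g)/(1+\pi g)\cdot t_3\pi$-type expression lying in filtration level $<0$, i.e.\ in the image of forms $\bil{u}[1,c]$ with $v_\infty(c)>0$. Such forms have $\overline c=0$, so $\partial_{q0}$ and $\partial_{q1}$ both vanish on them, giving $\partial_{q+}(\psi)=0$. Arason's theorem $I^3_q(F_3)_0=I^3_q(F_3)\cap I_q(F_3)_0$ ensures $\psi\in I^3_q(F_3)_0$.

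\textbf{Main obstacle.} The delicate point is Step 2: making rigorous, in quadratic-form language, that the level-one part is detected by the pair $\bigl(\dlog f_0(\infty)\wedge\dlog(a_1a_2),\,n\,\dlog(a_1a_2)\bigr)$ modulo $I^4$. My first attempt would be to compose $\kappa_4^{-1}$ with Kato's refined Swan-conductor symbol map on $\mathrm{gr}_1H^4$, and to check compatibility with Arason's $I_q(F_3)_1$ via \cite[Th.~3]{Ara2018} and \cite{Ara2022}.
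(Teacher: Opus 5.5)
Your proposal has a genuine gap in Step~2 and another in Step~4.

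\textbf{Step~2.} It rests on a false description of the level-one graded piece, and its consequence ``$n$ is even'' is wrong. At level $1$, which is prime to $2$, the pair $(\overline\alpha,\overline\beta)$ attached to $t_3(\alpha+\beta\wedge\dlog\pi)$ is not an invariant of the class. Indeed, $\dlog\pi=\dlog t_3$ in characteristic~$2$ and $d(t_3\beta)=t_3\dlog t_3\wedge\beta+t_3\,d\beta$. So the term $t_3\,\beta\wedge\dlog\pi$ is congruent to $t_3\,d\beta$ modulo exact forms, and only an $\Omega^2_{F_2}$-valued invariant survives; this is Arason's homomorphism $I^3_q(F_3)_1\to\Omega^2_{F_2}$ used in the paper.

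In particular $t_3\dlog t_3\wedge\dlog(a_1a_2)=d\bigl(t_3\dlog(a_1a_2)\bigr)$ vanishes in $H^3(F_3)$. (The relevant objects are $H^3$ and $\kappa_3$, not $H^4$ and $\kappa_4$.) In quadratic-form terms, $\bpf{t_3}[1,t_3]=0$ because $[1,t_3]$ represents $t_3$. Hence $f=t_3$ satisfies the hypothesis, since $\qpf{t_3,a_1a_2,t_3}=0$, yet $v_\infty(f)=-1$ is odd.

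The missing idea is exactly $\qpf{f,t_3}=\qpf{ft_3^n,t_3}$, which the paper uses at the outset to reduce to $v_\infty(f)=0$. Your Step~4 reduction to $\qpf{1+\pi g,a_1a_2,t_3}$ also needs this identity when $n$ is odd. Once it is in place, the rest of your Steps~2--3 coincides with the paper's argument: vanishing of $\dlog f_0(\infty)\wedge\dlog(a_1a_2)$ in $\Omega^2_{F_2}$, then $2$-independence giving $f_0(\infty)=x^2+a_1a_2y^2$. Arason's equality $I^4_q(F_3)\cap I_q(F_3)_1=I^4_q(F_3)_1$ supplies the rigor you flagged as the main obstacle.

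\textbf{Step~4.} The reason you give for $\partial_{q+}(\psi)=0$ is wrong. You claim $\psi$ can be taken among combinations of forms $\bil u[1,c]$ with $v_\infty(c)>0$, on which $\partial_{q0}$ and $\partial_{q1}$ both vanish. This is impossible in general. Two tame representatives of the same class differ by an element of $I^4_q(F_3)\cap I_q(F_3)_0=I^4_q(F_3)_0$, so $\partial_{q1}(\psi)$ modulo $I^3_q(F_2)$ does not depend on the choice of $\psi$, and it can be nonzero.

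For instance, $f=1+a_1t_3^{-1}$ has $f(\infty)=1$. The paper's representative is $\psi=\qpf{a_1a_2,t_3u,c}$ with $u(\infty)=1$ and $c(\infty)=a_1$, so $\partial_{q1}(\psi)=\qpf{a_1a_2,a_1}$. This form is anisotropic over $F_2$: for the $a_2$-adic valuation, $F_2(\wp^{-1}(a_1))$ is unramified while $a_1a_2$ has odd value. So it is not in $I^3_q(F_2)$.

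What actually makes $\partial_{q+}$ vanish is a cancellation $\partial_{q0}(\psi)=\partial_{q1}(\psi)$, coming from a slot $\bpf{t_3u}$ with $u(\infty)=1$. Your vague ``$[1,t_3+\ldots]$'' rewriting must be replaced by an explicit computation that produces such a slot. With $f(\infty)=x^2+a_1a_2y^2$ and $s=t_3(f+x^2+a_1a_2y^2)$, one has $t_3f^{-1}=(x^2+a_1a_2y^2)^{-1}(t_3+f^{-1}s)$. Hence $\qpf{f,a_1a_2,t_3}\equiv\qpf{a_1a_2,t_3+f^{-1}s,t_3}\bmod I^4_q(F_3)$. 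Then $\qpf{c,c}=0$ for $c=t_3+f^{-1}s$ converts this into $\psi=\qpf{a_1a_2,t_3+f^{-1}s,f^{-1}s}$.
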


\begin{proof}
  Since $\qpf{f,t_3} = \qpf{ft_3^n,t_3}$, we may
  substitute $f_0$ for $f$ and we are thus reduced to
  consider the case where $v_\infty(f)=0$. We assume this for the rest
  of the proof.
  
  Since by definition the module $I_q(F_3)_1$ contains
  $I_q(F_3)_0$, the hypothesis implies that $\qpf{f,a_1a_2,t_3}$
  lies in $I^3_q(F_3)_0+(I^4_q(F_3)\cap
  I_q(F_3)_1)$. In~\cite{Ara2022}, Arason shows that 
  $I^4_q(F_3)\cap I_q(F_3)_1=I^4_q(F_3)_1:=I^3(F_3)\cdot
  I_q(F_3)_1$. Moreover, he 
  describes on~\cite[p.~79]{Ara2022} a homomorphism from
  $I^3_q(F_3)_1$ to the vector space $\Omega^2_{F_2}$ of absolute
  $2$-differential forms over $F_2$. This homomorphism vanishes on
  $I^3_q(F_3)_0$ and on 
  $I^4_q(F_3)_1$, and maps $\qpf{f,a_1a_2,t_3}$ to $\frac{d
    f(\infty)}{f(\infty)}\wedge \frac{d(a_1a_2)}{a_1a_2}$, hence
  \[
    \frac{d f(\infty)}{f(\infty)}\wedge \frac{d(a_1a_2)}{a_1a_2}=0
    \qquad\text{in $\Omega^2_{F_2}$.}
  \]
  This equation does not hold if $1$, $f(\infty)$, $a_1a_2$ and
  $f(\infty) a_1a_2$ are $F_2^2$-linearly independent in $F_2$, for then
  $\frac{df(\infty)}{f(\infty)}$ and $\frac{d(a_1a_2)}{a_1a_2}$ are part
  of an $F_2$-base of $\Omega^1_{F_2}$. Therefore, there exist $x$,
  $y\in F_2$ such that
  \[
    f(\infty)=x^2+a_1a_2y^2 =N_{K_2/F_2}(x+y\sqrt{a_1a_2}).
  \]
  To complete
  the proof, we construct a tame form $\psi$ with $\psi\equiv
  \qpf{f,a_1a_2,t_3} \bmod I^4_q(F_3)$.

  Let $s=t_3(f+x^2+a_1a_2y^2)\in F_3$, so $v_\infty(s)\geq0$. Then
  \[
    t_3f^{-1}=(x^2+a_1a_2y^2)^{-1}(t_3+f^{-1}s).
  \]
  Since $\qpf{f,t_3} = \qpf{t_3f^{-1},t_3}$, it follows that
  \[
    \qpf{f,t_3} \equiv \qpf{x^2+a_1a_2y^2,t_3} + \qpf{t_3+f^{-1}s,t_3}
    \bmod  I^3_q(F_3).
  \]
  Multiplying by $\bpf{a_1a_2}$ and taking into account that
  $\bil{x^2+a_1a_2y^2}\bpf{a_1a_2} = \bpf{a_1a_2}$, we obtain
  \[
    \qpf{f,a_1a_2,t_3}\equiv \qpf{a_1a_2, t_3+f^{-1}s,t_3} \bmod
    I^4_q(F_3). 
  \]
  Now, use $\qpf{t_3+f^{-1}s,t_3+f^{-1}s}=0$ to rewrite the right side:
  \[
    \qpf{f,a_1a_2,t_3} \equiv \qpf{a_1a_2,t_3+f^{-1}s,f^{-1}s} \bmod
    I^4_q(F_3).
  \]
  Letting $\psi=\qpf{a_1a_2,t_3+f^{-1}s,f^{-1}s} = \qpf{a_1a_2,
    t_3(1+f^{-1}st_3^{-1}), f^{-1}s}$, we have $\psi\in I^3_q(F_3)_0$
  and $\partial_{q+}(\psi)=0$ because $(f^{-1}st_3^{-1})(\infty)=0$. 
\end{proof}

\begin{thm}
  \label{thm:Deltanontriv}
  $\Delta(A)\neq0$.
\end{thm}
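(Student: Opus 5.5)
Suppose for contradiction that $\Delta(A)=0$. Since $\Delta(A)$ is represented by $\qpf{t_1,a_2,a_1}$, this gives $f\in F_3^\times$ with
\[
  \qpf{t_1,a_2,a_1}\equiv \bpf f\bigl(\qpf{a_1+a_2,a_1a_2}+\qpf{t_1,a_1}+\qpf{t_2,a_2}\bigr)+\qpf{f,a_1a_2,t_3} \bmod I^4_q(F_3),
\]
using $q_A$ from~\eqref{eq:qA}. The plan is to reduce this to the congruence~\eqref{eq:deltaKFnontriv1} over $F_2$, which Theorem~\ref{thm:deltaKFnontriv} rules out. We use the $t_3^{-1}$-adic valuation $v_\infty$ with residue field $F_2$.

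First, multiply $f$ by a suitable power of $t_3^2$; this does not change $\bpf f$. Then every term other than $\qpf{f,a_1a_2,t_3}$ is tame: $a_1,a_2,t_1,t_2$ are $v_\infty$-units, and Pfister forms $\bpf f\cdot(\text{tame})$ lie in $I^3_q(F_3)_0$. Hence $\qpf{f,a_1a_2,t_3}\in I^3_q(F_3)_0+I^4_q(F_3)$, and Lemma~\ref{lem:tech3} applies. It yields a tame $\psi$ with $\partial_{q+}(\psi)=0$ and $\psi\equiv\qpf{f,a_1a_2,t_3}$ modulo $I^4_q$. It also gives $f_0(\infty)=N_{K_2/F_2}(g)$ for some $g\in K_2^\times$, where $f_0=ft_3^n$ and $n=v_\infty(f)$.

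Next, reduce to the case of a unit. If $n$ is odd, replace $f$ by $f t_3^{-1}$ times a square. The key tool is $\bpf f\equiv\bpf{f_0}+\bpf{t_3}\bmod I^2$, so the difference is $\bpf{t_3}q_A$ modulo $I^4_q$. This term must be handled, and I expect it to be the main obstacle. Its $\partial_{q+}$ residue is delicate because $\qpf{t_3,a_1a_2,t_3}=\qpf{t_3,a_1a_2,1}$-type terms are not tame. What I would try first is the identity $\bpf{t_3}\qpf{a_1a_2,t_3}=\bpf{t_3}\qpf{a_1a_2,1}$-like simplification ($\qpf{x,x}=\qpf{x,1}$), which makes it tame. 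Its residue then contributes $\qpf{a_1+a_2,a_1a_2}+\qpf{t_1,a_1}+\qpf{t_2,a_2}$ plus a term $\qpf{a_1a_2,1}$, up to the norm-factor. It should also be absorbable after changing $f_0(\infty)$ by a norm from $K_2$, via the $\partial_{q1}$ relation in the odd case, exactly as in Theorem~\ref{thm:const1}.

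In the even (unit) case, write the whole relation modulo $I^4_q(F_3)_0$, which is legitimate by Arason's theorem: $I^4_q\cap I_q(F_3)_0=I^4_q(F_3)_0$. Then apply $\partial_{q+}$. Since $\partial_{q+}(\psi)=0$ and the other terms are unramified, we obtain
\[
  \qpf{t_1,a_2,a_1}\equiv\bpf{f_0(\infty)}\bigl(\qpf{a_1+a_2,a_1a_2}+\qpf{t_1,a_1}+\qpf{t_2,a_2}\bigr)\bmod I^4_q(F_2),
\]
with $f_0(\infty)=N_{K_2/F_2}(g)$. This is precisely~\eqref{eq:deltaKFnontriv1}, contradicting Theorem~\ref{thm:deltaKFnontriv}. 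Therefore $\Delta(A)\neq0$.
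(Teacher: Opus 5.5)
Your route is the paper's: write $\Delta(A)=0$ as $\qpf{t_1,a_2,a_1}\equiv\bpf f q_A$, use Lemma~\ref{lem:tech3} to replace $\qpf{f,a_1a_2,t_3}$ by a tame $\psi$ with $\partial_{q+}(\psi)=0$, apply $\partial_{q+}$, and contradict Theorem~\ref{thm:deltaKFnontriv}. As written, however, the argument is complete only when $v_\infty(f)$ is even. For odd $v_\infty(f)$ you reduce to a unit at the cost of an extra term $\bpf{t_3}q_A$, call it the main obstacle, and sketch a fix that does not hold up:
\begin{itemize}
\item The identity $\qpf{x,x}=\qpf{x,1}$ is false for quadratic Pfister forms in characteristic~$2$. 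In fact $\qpf{x,x}=0$, because $[1,x]$ represents $x$ and so $[x,x)$ is split; the paper uses this inside Lemma~\ref{lem:tech3}.
\item Consequently the term $\qpf{t_3,a_1a_2,t_3}$ that you call non-tame is simply $0$.
\item Your claim that the residue of $\bpf{t_3}q_A$ contributes $\qpf{a_1+a_2,a_1a_2}+\qpf{t_1,a_1}+\qpf{t_2,a_2}$, to be absorbed ``via the $\partial_{q1}$ relation'', is unjustified and wrong for $\partial_{q+}$.
\end{itemize}
So the odd case is a genuine hole in the proposal.

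The missing observation is that no case split, and no preliminary normalization of $f$, is needed. Put $\varphi=\qpf{a_1+a_2,a_1a_2}+\qpf{t_1,a_1}+\qpf{t_2,a_2}$, which is defined over $F_2$.
\begin{itemize}
\item The form $\bpf f\varphi$ is tame for every $f\in F_3^\times$, since $I_q(F_3)_0$ is a $W(F_3)$-module.
\item The map $\partial_{q+}=\partial_{q0}+\partial_{q1}$ is blind to the parity of the valuation: for a uniformizer $\pi$, $\partial_{q+}(\bil{u_1}[1,u_2])=\partial_{q+}(\bil{u_1\pi}[1,u_2])$.
\item Hence $\partial_{q+}(\bpf f\varphi)=\bpf{f_0(\infty)}\varphi$ with $f_0=ft_3^{v_\infty(f)}$, for every $f$.
\item Lemma~\ref{lem:tech3} already gives $f_0(\infty)\in N_{K_2/F_2}(K_2^\times)$ for exactly this $f_0$.
\end{itemize}
Applying $\partial_{q+}$ to $\qpf{t_1,a_2,a_1}\equiv\bpf f\varphi+\psi \bmod I^4_q(F_3)_0$ therefore gives \eqref{eq:Deltanontriv2} directly, which is what the paper does. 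In the terms of your own reduction, $\bpf{t_3}q_A=\bpf{t_3}\varphi+\bpf{a_1a_2}\qpf{t_3,t_3}=\bpf{t_3}\varphi$. Moreover $\partial_{q+}(\bpf{t_3}\varphi)=\varphi-\varphi=0$, so the term you feared contributes nothing.
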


\begin{proof}
  As observed at the beginning of this subsection, $\Delta(A)$ is
  represented by $\qpf{t_1,a_2,a_1}$. Therefore, $\Delta(A)=0$ means
  that there exists $f\in F_3^\times$ such that
  \begin{equation}
    \label{eq:Deltanontriv1}
    \qpf{t_1,a_2,a_1} \equiv \bpf{f}q_A \bmod I^4_q(F_3).
  \end{equation}
  In view of the choice of $q_A$
  in~\eqref{eq:qA}, this equation implies
  \[
    \qpf{f,a_1a_2,t_3} \equiv \qpf{t_1,a_2,a_1} - \qpf{f,a_1+a_2,a_1a_2}
    -\qpf{f,t_1,a_1} - \qpf{f,t_2,a_2} \bmod I^4_q(F_3).
  \]
  Since all the quadratic forms on the right side are in
  $I_q(F_3)_0\cap I_q^3(F_3) = I^3_q(F_3)_0$, it follows that
  $\qpf{f,a_1a_2,t_3}$ belongs to
  $I^3_q(F_3)_0+I^4_q(F_3)$. Lemma~\ref{lem:tech3} then yields
  $\psi\in I_q^3(F_3)_0$ such that $\qpf{f,a_1a_2,t_3} \equiv
  \psi\bmod I^4_q(F_3)$ and $\partial_{q+}(\psi)=0$. Substituting
  in~\eqref{eq:Deltanontriv1}, we obtain
  \[
    \qpf{t_1,a_2,a_1} \equiv \bpf{f}\bigl(\qpf{a_1+a_2,a_1a_2} +
    \qpf{t_1,a_1} + \qpf{t_2,a_2}\bigr) + \psi \bmod I^4_q(F_3).
  \]
  Letting $f_0=ft_3^{v_\infty(f)}$ and applying $\partial_{q+}$ to
  each side of the last displayed equation yields 
  \begin{equation}
    \label{eq:Deltanontriv2}
    \qpf{t_1,a_2,a_1} \equiv
    \bpf{f_0(\infty)}\bigl(\qpf{a_1+a_2,a_1a_2} + \qpf{t_1,a_1} +
    \qpf{t_2,a_2}\bigr) \bmod I^4_q(F_2),
  \end{equation}
  But Lemma~\ref{lem:tech3} also
  yields $f_0(\infty)= N_{K_2/F_2}(g)$ for some $g\in K_2^\times$,
  so~\eqref{eq:Deltanontriv2} shows that \eqref{eq:deltaKFnontriv1}
  holds, contradicting
  Theorem~\ref{thm:deltaKFnontriv}. Thus, there is no $f\in
  F_3^\times$ such that \eqref{eq:Deltanontriv1} holds, hence
  $\Delta(A)\neq0$. 
\end{proof}

Note that $A$ is split by
$F_3(\wp^{-1}(a_1), \wp^{-1}(a_2), \sqrt{a_1a_2})$, and since
$\Delta(A)\neq0$ it does not decompose into a tensor product of
quaternion algebras.
Therefore it provides a counterexample to~\cite[Th.~4.1(iii)]{AJOR},
hence also to the ``kernel splitting property'' of the extension
$F_3\bigl(\wp^{-1}(a_1), \wp^{-1}(a_2)\bigr)\cdot
F_3(\sqrt{a_1a_2})/F_3$ asserted
in~\cite[Th.~1.2]{ALOR}, which is the sole ingredient in the proof
of~\cite[Th.~4.1(iii)]{AJOR}. 

\begin{remark}
  \label{rem:Deltatriv}
  Consider the $F_2$-algebra
  \[
    A'=[a_1a_2,a_1+a_2)_{F_2} \otimes_{F_2} [a_1,a_2t_1)_{F_2}
    \otimes_{F_2} [a_2,t_2)_{F_2}.
  \]
  Extending scalars to $K_2$ splits $[a_1,a_2)_{F_2}$ since
  $[a_1,a_2)_{K_2}\simeq [a_1,a_1)_{K_2}$, hence $[A']_{K_2}=[B]$ and
  we may identify $B$ with the centralizer of $K_2$ in $A'$. Then
  \eqref{eq:Gamma} yields
  \[
    \Psi_{K_2/F_2}\bigl(\delta_{K_2/F_2}(B)\bigr) = \Delta(A').
  \]
  Now, $\delta_{K_2/F_2}(B)\neq0$ by Theorem~\ref{thm:deltaKFnontriv},
  but $\Delta(A')=0$ since $A'$ is a tensor product of three
  quaternion algebras.
\end{remark}

\section*{Appendix: Quaternion algebras over purely inseparable
  biquadratic extensions}
\setcounter{section}{1}
\setcounter{prop}{0}
\renewcommand{\thesection}{\Alph{section}}

Purely inseparable biquadratic field extensions are excluded in
Section~\ref{sec:biquadesc}. This appendix highlights how this case
differs from those considered above. No invariant is needed here as a quaternion algebra always descends when its Brauer class does. More precisely, we have the following: 

\begin{thm}
  \label{thm:pureinsep}
  Let $L$ be a purely inseparable biquadratic field extension (i.e., a
  purely inseparable quartic extension of exponent one) of an
  arbitrary field $F$. For a quaternion $L$-algebra $Q$, the following
  are equivalent:
  \begin{enumerate}
  \item[(i)]
    there exists a central simple $F$-algebra $A$ such that $2[A]=0$
    and $[A]_L=[Q]$;
  \item[(ii)]
    there exists a quaternion $F$-algebra $Q_0$ such that $Q\simeq
    Q_0\otimes_FL$. 
  \end{enumerate}
  Moreover, when these conditions hold, for every central simple
  $F$-algebra $A$ of degree~$8$ and exponent~$2$ containing $Q$, the following hold:
  \begin{enumerate}
  \item[(a)]
    $A\simeq Q_0\otimes_FQ_1\otimes_FQ_2$ for some quaternion
    $F$-algebras $Q_0$, $Q_1$, $Q_2$ such that $L\subset
    Q_1\otimes_FQ_2$;
  \item[(b)]
    $\pi(\sigma)$ is hyperbolic for every involution $\sigma$ on $A$
    that extends the conjugation on $Q$.
  \end{enumerate}
\end{thm}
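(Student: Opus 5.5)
Write $L=F(\sqrt{\rho_1},\sqrt{\rho_2})$ and $K_r=F(\sqrt{\rho_r})$. The implication (ii)$\Rightarrow$(i) is trivial: take $A=Q_0$. For (i)$\Rightarrow$(ii), the plan is to descend $Q$ in two inseparable quadratic steps, $L\to K_1\to F$, with Proposition~\ref{prop:descquat} doing most of the work.

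\emph{First step.} Since $L\supset K_1$ is inseparable and $[Q]=[A]_L$, Theorem~\ref{thm:quadesc} applied to $L/K_1$ gives $N_{L/K_1}[Q]=2[A]_{K_1}=0$. Proposition~\ref{prop:descquat}(a) then yields a quaternion $K_1$-algebra $Q'$ with $Q\simeq Q'\otimes_{K_1}L$. Write $Q'=[\alpha,\beta)_{K_1}$. Since $K_1^2\subset F$, replacing $\alpha$ by $\alpha^2$ (as in the proof of Proposition~\ref{prop:descquat}) we may assume $\alpha\in F$.

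\emph{Second step.} The class $[Q']-[A]_{K_1}$ is killed by $L$, which is quadratic over $K_1$. The natural move is to correct $\beta$ by a norm from $L$ so that $N_{K_1/F}[Q']=0$, and then apply Proposition~\ref{prop:descquat}(b) over $K_1/F$ to get $\gamma\in F^\times$ with $Q'\simeq[\alpha,\gamma)_{K_1}$, so $Q\simeq[\alpha,\gamma)_F\otimes_FL$.

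Concretely, Brauer classes over $K_1$ killed by $L=K_1(\sqrt{\rho_2})$ have the form $[\delta,\rho_2)_{K_1}$. So $[\alpha,\beta)_{K_1}=[A]_{K_1}+[\delta,\rho_2)_{K_1}$, and applying $N_{K_1/F}$, which is induced by squaring, gives
\[
N_{K_1/F}[Q']=[\delta^2,\rho_2^2)_F=0,
\]
because $\rho_2^2$ is a square. We must check that $N_{K_1/F}$ of a class of the form $[\delta,\rho_2)_{K_1}$ with $\rho_2\in F$ is $[\delta^2,\rho_2^2)$; this comes from functoriality of the squaring homomorphism, as in the proof of Proposition~\ref{prop:descquat}. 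We also need $2[A]=0$. With these, Proposition~\ref{prop:descquat}(b) applies directly to $Q'=[\alpha,\beta)_{K_1}$. The main obstacle is making this computation of $N_{K_1/F}$ on symbols precise.

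\emph{Part (a).} Let $A$ have degree $8$ and contain $Q$, so $A$ contains $L$ with centralizer $Q\simeq Q_0\otimes_FL$. Then $[A]-[Q_0]$ is split by $L$, so $A$ is Brauer-equivalent to $Q_0\otimes C$ with $C$ a biquaternion algebra (up to Brauer class) split by $L$. We want $C$ to be a biquaternion algebra containing $L$. Using a purely inseparable analogue of the result of \cite{Ahmad} used in Lemma~\ref{lem:chgA}, write $[C]=[\delta_1,\rho_1)+[\delta_2,\rho_2)$. Then $L$ embeds in $C=[\delta_1,\rho_1)\otimes[\delta_2,\rho_2)$, and the degree count gives $A\simeq Q_0\otimes C$ with $Q_1=[\delta_1,\rho_1)$ and $Q_2=[\delta_2,\rho_2)$. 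It remains to arrange that the centralizer of $L$ in $C$ is the given copy; this follows from Skolem--Noether, since subfields of $A$ isomorphic to $L$ are conjugate.

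\emph{Part (b).} Choose $\sigma_0=\gamma_0\otimes\theta$ on $Q_0\otimes C$, where $\theta$ is a decomposable symplectic-compatible involution on $C$ fixing $L$ (orthogonal involutions on $Q_1$ and $Q_2$ fixing $\sqrt{\rho_r}$). Then $\sigma_0$ extends the conjugation on $Q$, and $\pi(\sigma_0)$ is hyperbolic by \cite[Prop.~8]{CRAS}. Any other such $\sigma$ equals $\Int(\ell)\circ\sigma_0$ with $\ell\in L^\times$. By Theorem~\ref{thm:main} and Lemma~\ref{lem:tech}, $\overline{\pi(\sigma)}=\overline{\bpf{N_{L/F}(\ell)}q_A}$. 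Since $N_{L/F}(\ell)=\ell^4\in F^{\times2}$, this is zero. Hence the $3$-fold Pfister form $\pi(\sigma)$ lies in $I^4_q$ and is hyperbolic.

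Lemma~\ref{lem:tech} was stated only in the non-purely-inseparable setting, so its first identity $\Nrp_{\sigma_A}(\ell)=N_{L/F}(\ell)$ has to be rechecked here. The check is direct, since $\Nrd_A(\ell)=N_{L/F}(\ell)^2$ holds verbatim.
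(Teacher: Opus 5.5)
Your proof is correct, but it takes a genuinely different route from the paper.

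\textbf{The paper's route.} The paper uses none of Theorem~\ref{thm:quadesc}, Proposition~\ref{prop:descquat} or Theorem~\ref{thm:main} here. After reducing to $\deg A=8$ with $Q\subset A$, it fixes an arbitrary involution $\sigma$ extending the conjugation. It then uses only the dimension count of Lemma~\ref{lem:pureinsep}, which produces $e\in\Symd(\theta)$ with $r=re+er$. With it, the paper builds inside $A$ a $\sigma$-stable quaternion algebra $Q_1$ generated by $e_1^4$ and $r_1$. Inside the centralizer of $Q_1$ it builds a $\sigma$-stable $Q_2$ generated by $e_2$ and $r_2$. Taking $Q_0$ to be the remaining centralizer gives $(A,\sigma)\simeq(Q_0,\sigma_0)\otimes_F(Q_1,\sigma_1)\otimes_F(Q_2,\sigma_2)$. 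From this, (ii), (a) and (b) follow at once, with the stronger conclusion that every $\sigma$ extending the conjugation is itself decomposable.

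\textbf{Your route.} You instead recycle the general machinery:
\begin{itemize}
\item for (ii), two inseparable quadratic descents via Theorem~\ref{thm:quadesc} and Proposition~\ref{prop:descquat};
\item for (a), Brauer-class bookkeeping plus Wedderburn and Skolem--Noether;
\item for (b), one explicit decomposable extension $\sigma_0$, whose hyperbolicity you transport to every $\sigma=\Int(\ell)\circ\sigma_0$ by Theorem~\ref{thm:main}.
\end{itemize}

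\textbf{Comparison.} Your approach is less elementary, since Theorem~\ref{thm:main} rests on index reduction and \cite{HLS}. In exchange, it shows clearly why no invariant is needed in the purely inseparable case. Because $N_{L/F}(\ell)=\ell^4\in F^{\times2}$, the indeterminacy subgroup $\overline{\bpf{N_{L/F}(L^\times)}q_A}$ is zero. So $\overline{\pi(\sigma)}$ does not depend on $\sigma$, and it vanishes as soon as one decomposable extension exists. The paper's argument, by contrast, is self-contained and yields $\sigma$-stable decompositions directly.

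\textbf{A simplification for (a).} You do not need any purely inseparable analogue of \cite{Ahmad}. Let $c=[A]-[Q_0]$.
\begin{itemize}
\item $c_{K_2}$ is split by $K_2(\sqrt{\rho_1})$, so $c_{K_2}=[[\delta,\rho_1)_{K_2}]=[[\delta^2,\rho_1)_{K_2}]$ with $\delta^2\in F$.
\item Hence $c-[[\delta^2,\rho_1)_F]$ is split by $K_2$, so it equals $[[\delta_2,\rho_2)_F]$ for some $\delta_2\in F$.
\end{itemize}
This is the same argument you already use in your second descent step.
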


The proof uses the following elementary ring-theoretic observation:

\begin{lemma}
  \label{lem:pureinsep}
  Let $E$ be a central simple algebra over a field $Z$ of
  characteristic~$2$ and let $\theta$ be an involution (of any type)
  on $E$. Let also $Z(r)\subset E$ be an inseparable quadratic
  extension of $Z$. If $r=u+\theta(u)$ for some $u\in E$ that commutes
  with $r$, then there exists $e\in\Symd(\theta)$ such that $r=re+er$.
\end{lemma}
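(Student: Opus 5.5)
The goal is to produce $e\in\Symd(\theta)$ with $re+er=r$. The natural candidate is $e=u r^{-1}$ or $e=r^{-1}u$, suitably adjusted so that it lies in $\Symd(\theta)$.

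First record the basic facts. Since $Z(r)$ is an inseparable quadratic extension of $Z$, we have $r^2=\rho\in Z^\times$, so $r$ is invertible with $r^{-1}=\rho^{-1}r$. Also $r=u+\theta(u)\in\Symd(\theta)$, so $\theta(r)=r$. Because $u$ commutes with $r$, applying $\theta$ shows that $\theta(u)$ commutes with $\theta(r)=r$.

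The plan is to take $e=r^{-1}u r^{-1}\cdot r = r^{-1}u$, but this need not lie in $\Symd(\theta)$. Instead, I would use the element
\[
x=\rho^{-1}\,r u r = u\quad(\text{since } u \text{ commutes with } r),
\]
which is no help on its own. The cleaner choice is to set $y=\rho^{-1}\,r u$ and $e=y+\theta(y)$; this lies in $\Symd(\theta)$ by construction. One computes
\[
\theta(y)=\theta(\rho)^{-1}\theta(u)\,r .
\]
Here the hypothesis that $\theta$ may be of the second kind forces attention to $\theta(\rho)$. Note that $\rho=r^2$, so $\theta(\rho)=\theta(r)^2=r^2=\rho$. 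Hence $\theta(\rho)=\rho$ regardless of the type of $\theta$, and so
\[
e=\rho^{-1}\bigl(ru+\theta(u)r\bigr)=\rho^{-1}r\bigl(u+\theta(u)\bigr)=\rho^{-1}r^2=1,
\]
using that $\theta(u)$ commutes with $r$. Unfortunately this gives $e=1$, and then $re+er=2r=0$, which fails. So this choice must be modified.

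The fix is to use a different element: try $y=\rho^{-1}\,r u$ again but take $e=\rho^{-1}\,r\,u\,\theta(\ldots)$, or search for $e$ of the shape $e=z+\theta(z)$ with $z$ built from $u$ alone. The relation to aim for is $re+er=r$. Take $z=u\,\theta(u)\,r^{-1}$ or similar combinations, and test them. A more promising route is to observe that in the commutative subalgebra $C$ generated by $r$, $u$, and $\theta(u)$ (these pairwise commute when $u$ and $\theta(u)$ commute), every element commutes with $r$, so $re+er=0$ for all $e\in C$. Hence $e$ must be chosen outside the centralizer of $r$.

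So the genuine step is to find $w\in E$ with $wr+rw$ related to $r$. I would proceed as follows. Since $r\notin Z$ while $r^2\in Z$, the map $\operatorname{ad}_r\colon x\mapsto xr+rx$ satisfies $\operatorname{ad}_r^2=0$, and its kernel is the centralizer of $r$; by Skolem--Noether and double centralizer considerations, $r$ lies in the image of $\operatorname{ad}_r$. Choose $w$ with $wr+rw=r$, and symmetrize: $\theta(w)r+r\theta(w)=r$ as well, since $\theta(r)=r$. The main obstacle is to land in $\Symd(\theta)$. I would try $e=w+\theta(w)+c$ with $c$ a correction in the centralizer of $r$, using $u$ to supply $c$. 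The hypothesis $r=u+\theta(u)$, with $u$ centralizing $r$, is what should make this adjustment possible: for example, $e=uw'+\theta(uw')$ with $w'r+rw'=1$ gives $re+er=u+\theta(u)=r$, provided $w'$ can be chosen with $\theta(w')=w'$ commuting with $u$. Securing such a $w'$, which requires $1\in\operatorname{im}\operatorname{ad}_r$ in a $\theta$-compatible way, is where I expect the real work to lie.
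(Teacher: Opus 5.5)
Your proposal stops short of a proof. It ends by declaring that the ``real work'' is to find $w'$ with $w'r+rw'=1$, $\theta(w')=w'$ and $w'u=uw'$, and that condition is never established. In fact no such condition is needed, and the missing step uses only facts you already listed. The map $\mathrm{ad}_r\colon x\mapsto rx+xr$ is $Z$-linear and satisfies $\mathrm{ad}_r^2=0$ because $r^2\in Z$, so its image lies in its kernel, the centralizer $C$ of $r$. By the double centralizer theorem, $\dim_Z C=\frac12\dim_Z E$, since $Z(r)$ is a quadratic field extension of $Z$. Rank--nullity then forces $\mathrm{im}\,\mathrm{ad}_r=C$. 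This, and not Skolem--Noether, is the justification for your claim that $r$ (and likewise $1$) lies in the image. Moreover, $\theta(r)=r$ gives $\theta\circ\mathrm{ad}_r=\mathrm{ad}_r\circ\theta$ for an involution of either kind, since $\theta$ is additive and reverses products.

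The key idea is to apply surjectivity onto $C$ to $u$ itself, which lies in $C$ by hypothesis, rather than to $r$. Symmetrizing a $w$ with $\mathrm{ad}_r(w)=r$ cannot work, because $\mathrm{ad}_r(w+\theta(w))=r+r=0$. Instead, write $u=\mathrm{ad}_r(x)$ and set $e=x+\theta(x)\in\Symd(\theta)$. Then $\mathrm{ad}_r(e)=u+\theta(u)=r$, which is the paper's proof. Your own candidate also works with no extra condition. If $w'r+rw'=1$ for any $w'$, then $\mathrm{ad}_r(uw')=u\,\mathrm{ad}_r(w')=u$ because $u$ commutes with $r$. So $x=uw'$ is such a preimage, and $e=uw'+\theta(uw')$ satisfies $re+er=u+\theta(u)=r$. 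Neither $\theta(w')=w'$ nor $w'u=uw'$ is ever used.
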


\begin{proof}
  Consider the $Z$-linear map $r*\text{\textvisiblespace}\colon E\to
  E$ that maps $x\in E$ to $rx+xr$. Its kernel is the centralizer
  $C$ of $r$ in $E$, and $(r*\text{\textvisiblespace})^2=0$ because
  $r^2\in Z^\times$. Therefore, the image of
  $r*\text{\textvisiblespace}$ lies in $C$. But
  $\dim_ZC=\frac12\dim_ZE$ since $Z$ is a field, hence dimension count
  shows that $C$ is also the image of $r*\text{\textvisiblespace}$. In
  particular, $u=rx+xr$ for some $x\in E$. Then
  $\theta(u)=\theta(x)r+r\theta(x)$, hence
  \[
    r=u+\theta(u)=r\bigl(x+\theta(x)\bigr) + \bigl(x+\theta(x)\bigr)r.
  \]
  This yields $r=re+er$ with $e=x+\theta(x)\in\Symd(\theta)$.
\end{proof}

\begin{proof}[Proof of Theorem~\ref{thm:pureinsep}]
  (ii)~$\Rightarrow$~(i) If (ii) holds, then (i) holds with $A=Q_0$.

  (i)~$\Rightarrow$~(ii) If $A$ satisfies~(i), then $A$ is split by
  the quadratic extensions of $L$ that split $Q$, hence up to
  Brauer-equivalence we may assume that $\deg A=8$ and $A$ contains
  $Q$. 

  Let $\sigma$ be an involution on $A$ that extends the conjugation
  $\gamma$ on $Q$. We show below that~(a) holds with quaternion
  $F$-algebras $Q_0$, $Q_1$, $Q_2\subset A$ stable under
  $\sigma$. Besides proving~(a), this proves~(ii) since $Q$ is the
  centralizer of $L$ in $A$, and also~(b) by the main theorem
  in~\cite{BGBT3} or by~\cite[Prop.~8]{CRAS}.

  Pick $r_1$, $r_2\in L$ such that $L=F(r_1,r_2)$ and let $B\subset A$
  be the centralizer of $r_2$. Thus,
  \[
    F\subset F(r_2)\subset L\subset Q\subset B\subset A.
  \]
  Let also $\sigma_B$ be the restriction of $\sigma$ to $B$, so
  $\sigma_B$ also restricts to $\gamma$ on $Q$. Since
  $L=\Symd(\gamma)$, there exists $u_1\in Q$ such that
  $r_1=u_1+\sigma_B(u_1)$. Lemma~\ref{lem:pureinsep} yields
  $e_1\in\Symd(\sigma_B)$ such that
  \[
    r_1=r_1e_1+e_1r_1,\qquad\text{hence}\quad
    r_1e_1r_1^{-1}=e_1+1.
  \]
  Then $r_1e_1^{2^k}r_1^{-1} = e_1^{2^k}+1$ for all $k\geq0$; in
  particular $e_1^2+e_1$ commutes with $r_1$. Since $e_1\in B$, $e_1^2+e_1$ also
  commutes with $r_2$, and it follows that $e_1^2+e_1\in
  Q$. Moreover, $e_1^2+e_1\in\Sym(\gamma)$ because
  $e_1\in\Symd(\sigma_B)$. Therefore, $(e_1^2+e_1)^2\in L$, hence
  $(e_1^2+e_1)^4\in F$. Set $i_1=e_1^4$. Then $i_1^2+i_1\in F$ and
  $r_1i_1r_1^{-1}=i_1+1$, hence $i_1$ and $r_1$ generate a quaternion
  $F$-algebra $Q_1\subset A$, which is stable under $\sigma$ since
  $i_1$, $r_1\in\Sym(\sigma)$.

  Let $A'\subset A$ be the centralizer of $Q_1$, and write $\sigma_1$,
  $\sigma'$ for the restrictions of $\sigma$ to $Q_1$ and $A'$
  respectively. Under the map $Q_1\times A'\to A$ defined by
  multiplication in $A$, we thus have
  \begin{equation}
    \label{eq:pureinsep}
    (Q_1,\sigma_1)\otimes_F(A',\sigma') \simeq (A,\sigma).
  \end{equation}
  The involution $\sigma_1$ is orthogonal because
  $i_1\in\Sym(\sigma_1)$ while the reduced trace of $i_1$ in $Q_1$ is
  $1$. Therefore, 
  by~\cite[(2.23)]{BoI} the involution $\sigma'$ is symplectic.
   
  Note that $r_2$ commutes with $r_1$, but also with $i_1$, since the latter is in $B$; therefore, $r_2\in A'$.
  Just as for $r_1$ above, we
  may find $u_2\in Q$ such that
  $r_2=u_2+\sigma(u_2)$. Using~\eqref{eq:pureinsep} as an
  identification, write
  \[
    u_2=1\otimes a_0+r_1\otimes a_1+i_1\otimes a_2 + i_1r_1\otimes a_3
    \qquad\text{with $a_0,a_1,a_2,a_3\in A'$.}
  \]
  Since $u_2$ commutes with $r_1$, we have $a_2=a_3=0$. Moreover,
  $a_0$ and $a_1$ commute with $r_2$ because $u_2\in Q$ commutes with
  $r_2\in L$. Now,
  \[
    r_2=1\otimes r_2 = u_2+\sigma(u_2) = 1\otimes a_0 + r_1\otimes a_1
    + 1\otimes\sigma'(a_0) +r_1\otimes\sigma'(a_1),
  \]
  hence $r_2=a_0+\sigma'(a_0)$. Lemma~\ref{lem:pureinsep} then yields
  $e_2\in\Symd(\sigma')$ such that
  \[
    r_2=r_2e_2+e_2r_2.
  \]
  As above, $e_2^2+e_2$ commutes with $r_2$; since $r_2\not=0$, $e_2$ does not commute with $r_2$ and it follows that $F(e_2^2+e_2)\subsetneq
  F(e_2)$. But since $\sigma'$ is a symplectic involution on a
  biquaternion algebra, the minimal polynomial of $e_2$ over $F$ has
  degree at most~$2$, hence $e_2^2+e_2\in F$. Therefore, $e_2$ and
  $r_2$ generate a quaternion $F$-algebra $Q_2\subset A'$, which is
  stable under $\sigma'$ since $e_2$, $r_2\in\Sym(\sigma')$. Let
  $Q_0\subset A'$ be the centralizer of $Q_2$, and write $\sigma_2$
  and $\sigma_0$ for the restrictions of $\sigma'$ to $Q_2$ and $Q_0$
  respectively. Then from~\eqref{eq:pureinsep} we obtain
  \[
    (A,\sigma) \simeq (Q_0,\sigma_0) \otimes_F(Q_1,\sigma_1)
    \otimes_F(Q_2,\sigma_2)
  \]
  with $r_1\in Q_1$ and $r_2\in Q_2$, hence $L\subset
  Q_1\otimes_FQ_2$. 
\end{proof}

\begin{remark}
\label{rem:cex}
  Aravire--Jacob--O'Ryan give in \cite[Th.~4.1(ii)]{AJOR} a
  cohomological proof of the following statement:
  \begin{quote}
    \emph{If a central division $F$-algebra $D$ of degree~$8$ and
      exponent~$2$ is split by a field extension of the form
      $F(\wp^{-1}(a),\sqrt x,\sqrt y)$, then $D\simeq
      [a,t)_F\otimes_F[b_1,x)_F\otimes_F[b_2,y)_F$ for some $t\in
      F^\times$ and $b_1$, $b_2\in F$.}
  \end{quote}
  A significant part of Theorem~\ref{thm:pureinsep} follows from this statement. Indeed, if $Q/L$ is a quaternion algebra whose Brauer class descends, then $Q$ is contained in a central simple algebra $A$ of degree $8$ and exponent $2$, to which we may apply the above statement. The decomposition then provides a descent for $Q$.  
  Note that \cite[Th.~4.1(ii)]{AJOR} also claims that every central
  division $F$-algebra of degree~$8$ and exponent~$2$ split by a field
  extension of the form $F(\wp^{-1}(a),\sqrt[4]x)$ decomposes as a
  tensor product of three quaternion algebras. A counterexample to the
  latter statement is obtained as follows:

  Let $M/F$ be a cyclic field extension of degree~$8$; write $\alpha$
  for a generator of its Galois group and consider a cyclic algebra
  \[
    C=\bigoplus_{i=0}^7Mz^i \qquad\text{where $z^8\in F^\times$ and
      $zm=\alpha(m)z$ for all $m\in M$.}
  \]
  Let $K\subset M$ be the unique subfield that is a quadratic
  extension of $F$, i.e., the subfield fixed under $\alpha^2$. If $C$
  is a division algebra, then $K(z^2)$ is a 
  maximal subfield of $C$ of the form
  $F(\wp^{-1}(a),\sqrt[4]x)$. Twisted Laurent series constructions
  easily yield such examples where $C$ has exponent~$8$. Extending
  scalars to the function field $F_1$ of the Severi--Brauer variety of
  $C\otimes_FC$ then yields a division algebra $C_{F_1}$ that is
  `generic' in Karpenko's sense \cite[Ex.~4.12]{Karp}. This algebra is
  therefore indecomposable by~\cite[Cor.~5.4]{Karp}, even though it is
  split by $F_1\cdot K(z^2)$, which is of the form
  $F_1(\wp^{-1}(a),\sqrt[4]x)$.

  Incidentally, this example also shows that
  Theorem~\ref{thm:pureinsep} does not hold for purely
  inseparable quartic field extensions $L/F$ of exponent two instead
  of one.
\end{remark}

\bibliographystyle{plain}
\bibliography{QT_Delta}
\end{document}